% Lattice points for products of upper half planes
%
% Roelof Bruggeman, Fritz Grunewald, Roberto Miatello
%
% version April 10, 2009

\documentclass[11pt]{amsart}

\usepackage{amssymb}

\usepackage[varg]{txfonts}

\newtheorem{thm}{Theorem}[section]

\newtheorem{lem}[thm]{Lemma}

\theoremstyle{definition}

\theoremstyle{remark}

\newcommand\al{\alpha}
\newcommand\bt{\beta}
\newcommand\Gm{\Gamma}
\newcommand\Gf{\Gamma} % gamma function
\newcommand\gm{\gamma}
\newcommand\Dt{\Delta}
\newcommand\dt{\delta}
\newcommand\e{\varepsilon}

\renewcommand\th{\vartheta}
\renewcommand\k{\kappa}
\newcommand\Ld{\Lambda}
\newcommand\ld{\lambda}

\newcommand\x{\xi}
\newcommand\p{\pi}
\renewcommand\r{\rho}

\newcommand\ups{\upsilon}

\newcommand\ch{\chi}
\newcommand\ps{\psi}

\newcommand\om{\omega}

\newcommand\CC{{\mathbb{C}}}
\newcommand\NN{{\mathbb{N}}}
\newcommand\QQ{{\mathbb{Q}}}
\newcommand\RR{{\mathbb{R}}}
\newcommand\ZZ{{\mathbb{Z}}}

\newcommand\Ecal{{\mathcal{E}}}
\newcommand\Hcal{{\mathcal{H}}}
\newcommand\Kcal{{\mathcal{K}}}
\newcommand\Lcal{{\mathcal{L}}}
\newcommand\Ocal{{\mathcal{O}}}

\newcommand\y{{\mathfrak y}}

\newcommand\re{\operatorname{Re}}
\newcommand\im{\operatorname{Im}}
\newcommand\isdef{\mathrel{:\mskip1mu=}}
\newcommand\oh{\mathrm{O}}
\newcommand\fd{{\mathfrak{F}}}

\newcommand\vol{\mathrm{vol}}
\newcommand\siegel{{\mathfrak S}}
\newcommand\cnt{{\mathbf N}}
\newcommand\onebold{{\mathbf 1}}
\newcommand\vmin{V_{\mathrm{min}}}

\newcommand\hypgeom{{}_2\!F_{\!1}}
\newcommand\expl{{\mathrm{expl}}}

\newcommand\SL{\mathrm{SL}}
\newcommand\PSL{\mathrm{PSL}}
\newcommand\PSO{\mathrm{PSO}}

\newcommand\dist{\mathrm{d}}
\newcommand\supp{\mathrm{Supp}}

\newcommand\uhp{\mathfrak{H}}

\renewcommand\setminus{\smallsetminus}

\renewcommand\={\;=\;}

\makeatletter
%in amsmath  \atop  is redefined, \@@atop is left
\newcommand\matc[4]{\left[ {#1\@@atop #3}{#2\@@atop #4}\right]}
\newcommand\matr[4]{\left[ {\hfill #1\@@atop\hfill #3}{\hfill
#2\@@atop\hfill #4}\right]}
\newcommand\rmatc[4]{\left( {#1\@@atop #3}{#2\@@atop #4}\right)}
\newcommand\rmatr[4]{\left( {\hfill #1\@@atop\hfill #3}{\hfill
#2\@@atop\hfill #4}\right)}
\makeatother

\newcommand\txtfrac[2]{{\textstyle\frac{#1}{#2}}}

\hyphenation{Kloos-ter-man Hil-bert Ca-si-mir}

\begin{document}

\title{New lattice point 
asymptotics for products of upper half planes}

\author{R.W.\,Bruggeman}
\address{Mathematisch Instituut Universiteit Utrecht, Postbus 80010,
3508 TA Utrecht, Netherlands} \email{r.w.bruggeman@uu.nl}

\author{F.\,Grunewald}
\address{Mathematisches Institut der Heinrich-Heine-Universit\"at
D\"usseldorf, Uni\-ver\-si\-t\"ats\-stra\ss{e}~1, 40225 D\"usseldorf,
Germany}
\email{fritz@math.uni-duesseldorf.de}

\author{R.J.\,Miatello}
\address{FaMAF-CIEM, Universidad Nacional de C\'or\-do\-ba,
C\'or\-do\-ba~5000, Argentina} \email{miatello@mate.uncor.edu}

\subjclass[2000]{Primary 11F41
% Hilbert and Hilbert-Siegel modular groups ...
; Secondary 11F72 % spectral theory
 }

\keywords{lattice points, irreducible lattice, Hilbert modular group,
Selberg transform, spectral measure}

\begin{abstract}
Let $\Gamma$ be an irreducible lattice in $\PSL_2(\RR)^d$ ($d\in\NN$) and
$z$ a point in the $d$-fold direct product of the upper half plane.
 We study the discrete set of 
componentwise distances ${\bf D}(\Gm,z)\subset \RR^d$ defined in (1).
We prove asymptotic results on the number of $\gm\in\Gm$ such that
$d(z,\gamma z$ is contained in strips expanding in some directions
 and also in  expanding hypercubes. 
The results on the counting in expanding strips are new. The results on 
expanding hypercubes % improve the error terms
improve the existing error terms (\cite{GN10})  and generalize the Selberg
 error term for $d=1$.

We give an asymptotic formula for
the number of lattice points $\gamma z$ such that the hyperbolic
distance in each of the factors satisfies $d((\gamma z)_j, z_j)\le T$. 
The error term, as $T \rightarrow \infty$
generalizes the error term given by Selberg for $d=1$, also
we describe how the counting function depends on
$z$. We also prove asymptotic results when
the distance satisfies $A_j \le d((\gamma z)_j, z_j) < B_j$, with
fixed $A_j <   B_j$ in some factors, while in the remaining factors 
$0 \le   d((\gamma z)_j, z_j) \le T$ is satisfied.
\end{abstract}

\maketitle

\tableofcontents

%%%%%%%%%%%%%%%%%%%%%%%%%%%%%%%%%%%
\section{Introduction} 
%%%%%%%%%%%%%%%%%%%%%%%%%%%%%%%%%%

Let $\uhp=\{\, x+iy\in \CC : y> 0\,\}$ be the upper halfplane 
equipped with the hyperbolic metric $d:\uhp\times\uhp\to \RR$ and
its invariant measure induced by 
$\frac{dx\, dy}{y^2}$.
The group of orientation preserving isometries of this metric space is 
$\PSL_2(\RR)$. 
Let now $d$ be a natural number and consider the semisimple Lie group 
$\PSL_2(\RR)^d$ as acting on its corresponding symmetric space $\uhp^d$. 
We write $z=(z_1,\ldots,z_d)$ for the coordinates 
$z_1,\ldots,z_d\in\uhp$ of a point $z\in \uhp^d$. Let
us consider the vector valued distance function
\begin{equation}
d(z,u):=(d(z_1,u_1),\ldots ,d(z_d,u_d))\in \RR^d
\end{equation}
for points $z=(z_1,\ldots,z_d)$, $u=(u_1,\ldots,u_d)\in \uhp^d$.
The canonical invariant distance of $z,\, u$ is then the euclidean norm 
of $d(z,u)$. But other choices of norms (like the maximum norm) also induce
$\PSL_2(\RR)^d$-invariant metrics on $\uhp^d$. 

Let $\Gm \subset\PSL_2(\RR)^d$ be an irreducible lattice.
A lattice in $\PSL_2(\RR)^d$ is a 
discrete subgroup $\Gm \subset\SL_2(\RR)^d$ of finite
covolume, that is, the volume of the quotient
$\Gm\backslash \uhp^d$
in the canonical measure is finite.  
The lattice $\Gm \subset\SL_2(\RR)^d$
is called irreducible 
if all projections of $\Gm$ to non-trivial
subproducts of $\PSL_2(\RR)^d$ are dense.
A main example is the
\emph{Hilbert modular group} $\PSL_2(\Ocal)$ for the ring of integers
$\Ocal$ of a totally real number field $F$ of degree $d$ over~$\QQ$,
embedded in the product $\PSL_2(\RR)^d$ by the $d$ embeddings of $F$
into~$\RR$. The embedded group $\PSL_2(\Ocal)$ and all its subgroups of finite
index are irreducible lattices in $\PSL_2(\RR)^d$. They are not cocompact, 
which means that the quotient $\Gm\backslash \uhp^d$ is not compact.
In case $d\ge 2$
every irreducible lattice in $\PSL_2(\RR)^d$ which is not cocompact contains
a subgroup of finite index  which is $\PSL_2(\RR)^d$-conjugate to a subgroup 
of finite index in one of the
$\PSL_2(\Ocal)$.
Irreducible cocompact lattices in
$\PSL_2(\RR)^d$ are constructed from quaternion algebras over 
totally real number fields $F$. In case $d\ge 2$ these are 
up to conjugacy the only examples, by Margulis' 
arithmeticity theorem. See Section \ref{sect-Lgdsg} for more details.

Let $\Gm \subset\PSL_2(\RR)^d$ be an irreducible lattice and let 
$z\in \uhp^d$ be fixed. Consider the set of vector valued distances 
\begin{equation}\label{ddd}
{\bf D}(\Gm,z):=\{\, d(z,\gamma z)\in \RR^d\ :\ 
\gamma\in \Gm\, \}.
\end{equation}
This clearly is an infinite discrete subset of 
$\RR^d$. But what more can be said? In this paper we shall 
prove results which describe 
the distribution of the points of ${\bf D}(\Gm,z)$ in various regions like
strips or expanding polyhedra in $\RR^d$.

To give a precise 
formulation of our main results, we need to discuss some aspects of the
spectral theory of $L^2(\Gm\backslash\uhp^d)$. 
This Hilbert space has
an infinite dimensional subspace
$L^{2,\mathrm{discr}}(\Gm\backslash \uhp^d)$ with an orthonormal
basis $\{\ps_\ell\}$\,  ($\ell\in \NN\cup \{0\}$) 
of joint eigenvectors of the Laplace operators
$\Dt_j = -y_j^2 \partial_{x_j}^2-y_j^2\partial_{y_j}^2$ ($j=1,\ldots d$)
in the 
factors. Among the eigenfunctions is the constant function
$\ps_0(z)=\bigl(\vol(\Gm\backslash\uhp^d)\bigr)^{-1/2}$ for which the
eigenvalues of all $\Dt_j$ are all equal to $0$. The corresponding
multi-eigenvalues $\ld_\ell$ have finite multiplicities and form a
discrete set in $[0,\infty)^d$. For $\ell\geq 1$ one knows that
$\ld_{\ell,j}>0$ for all\, $j=1,\ldots, d$. If $\Gm$ is cocompact, then
$L^{2,\mathrm{discr}}(\Gm\backslash \uhp^d)$ is all of
$L^2(\Gm\backslash\uhp^d)$. Otherwise the elements of the orthogonal
complement of $L^{2,\mathrm{discr}}(\Gm\backslash \uhp^d)$ can be
described as sums of integrals of Eisenstein series.

We call a multi-eigenvalue $\ld_\ell$ \emph{exceptional} if
$0<\ld_{\ell,j}<\frac14$ for some coordinate~$j\in \{1,\ldots, d\}$. 
If $d\geq 2$, there
may be infinitely many exceptional eigenvalues, since there is no
bound on the other coordinates. If $0<\ld_{\ell,j}<\frac14$ for
all~$j$  we call $\ld_\ell$ \emph{totally exceptional}. There are at
most finitely many totally exceptional eigenvalues.

For a further discussion we use the parametrization
$\ld = \frac14-\tau^2$ by the \emph{spectral parameter}~$\tau$. In
$L^2(\Gm\backslash \uhp^d)$ all eigenvalues of local Laplace
operators are in $[0,\infty)$, so we can choose $\tau\in
i[0,\infty)\cup\bigl[0,\frac12\bigr]$. Thus we have
$\tau_{0,j}=\frac12$ for all~$j$, and $\re\tau_{\ell,j}<\frac12$ for
all~$j$.

For a congruence subgroup $\Gm$ of a Hilbert modular group it has been
shown by Kim and Shahidi, \cite{KS2}, that
$\re \tau_{\ell,j} \leq \frac19$ for all $\ell\geq 1$ and all~$j$.
For this situation a conjecture called after Selberg says that $\re
\tau_{\ell,j}=0$ for all $\ell\geq 1$ and all~$j$. Below we will
discuss other results concerning $\re\tau_{\ell,j}$, $\ell\geq 1$.
For the formulation of our results we summarize the information
concerning exceptional eigenvalues in the quantity
\begin{equation}\label{tau-hat} \hat\tau \= \hat\tau(\Gm)\;\isdef\;
\sup_{\ell\geq 1\,,\;1\leq j\leq d} \re\tau_{\ell,j}\,.
\end{equation}
This, by definition, is 
an element of $\bigl[0,\frac12\bigr]$. Since there may be
infinitely many $\ld_\ell$, 
the value $\frac12$ might occur in (\ref{tau-hat}), 
although here
we omit $\ell=0$.% in~\eqref{tau-hat}.
 If $\hat\tau<\frac12$ one says
that $\Gm\backslash\uhp^d$ has a \emph{strong spectral gap}. 
In our later arguments we use that $\hat\tau<\frac12$, a fact 
proved in \cite{KeSa} by Kelmer and Sarnak 
for cocompact~$\Gm$. 
If $\Gm$
is not cocompact and $d\geq 2$ then $\Gm$ contains a subgroup of finite
index which is conjugate to a congruence subgroup  
of a Hilbert modular group $\SL_2(\Ocal)$, for which the
results of Kim and Shahidi, \cite{KS2}, imply that
$\hat\tau\leq \frac19$. Here we use the important fact, proved in 
\cite{Serre} that every subgroup of finite index in $\SL_2(\Ocal)$ 
is a congruence subgroup.

Let now $E\subset \{1,\ldots,d\}$ be a non-empty subset
and let $I:=\{\, I_j \ : \ j\in E\,\}$ be a set of 
bounded intervals
$I_j:=[A_j,B_j) \subset [0,\infty)$.
Define for $T >0$ 
\begin{equation} 
{S}(E,I;T):=\{\, (x_1,\ldots x_d)\in \RR^d \ : \ x_j \in I_j\ {\rm for}\
j\in E, \ 0\le x_j\le T\ {\rm for } j\notin E\, \}.
\end{equation} 
We think of ${S}(E,I;T)$ as a strip of increasing height $T$ in $\RR^d$.
Given $z\in \uhp^d$ we introduce the counting quantity 
\begin{equation}\label{NEdef}
N_E(z;T):= \#\, \{\, \gm\in \Gm \ : \ d(z,\gamma z) \in {S}(E,I;T)\, \}.
\end{equation}
We show
\begin{thm}\label{thm-QEai}
Let $\Gm$ be an irreducible lattice in
$\PSL_2(\RR)^d$, with $d\geq 2$.
Let $E\subset \{1,\ldots,d\}$ be a subset with $e:=\#E\geq 1$.
Define $Q:= \{1,\ldots,d\}\setminus E$ and assume
$q:=\#Q \geq 1$. 
Let finite intervals
$[A_j,B_j)\subset[0,\infty)$ be given for $j\in E$, the quantity $N_E(z;T)$ in
\eqref{NEdef} satisfies as $T\rightarrow\infty$:
\begin{align*}
N_E(z;T) &\= \frac{\p^d \, 2^e}{\vol(\Gm\backslash\uhp^d)} \;e^{q T}
\;\prod_{j\in E} \bigl( \cosh B_j - \cosh A_j\bigr)
\\
&\qquad\hbox{}
+
\begin{cases}
\displaystyle \oh_{\Gm,E} \biggl( n(z)\, \exp\biggl( \frac{d+1}{d+2}\,
q T \biggr) \biggr)
& \displaystyle\text{ if }\hat\tau\leq \frac q{2(d+2)}\,,\medskip\\
\displaystyle \oh_{\Gm,E}\biggl(n(z)\, \exp\biggl(
\frac{1+2\hat\tau+e}{2+e}\, q T\biggr)
\biggr)& \displaystyle\text{ if }\frac q{2(d+2)}\leq
\hat\tau<\frac12\,.
\end{cases}
\end{align*}
\end{thm}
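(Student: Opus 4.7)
The plan is to follow Selberg's spectral counting method, adapted to the product structure of $\uhp^d$. I would choose a product point-pair invariant kernel $k(r_1,\ldots,r_d) = \prod_{j=1}^d k_j(r_j)$, where for each $j \in E$ the function $k_j$ is a smooth approximation to $\onebold_{[A_j,B_j)}$ at a small fixed scale $\eta$, and for each $j \in Q$ the function $k_j$ is a smooth approximation to $\onebold_{[0,T]}$ at a scale $\dt$ to be optimized at the end. Then $\sum_{\gm\in\Gm} k(d(z,\gm z))$ equals $N_E(z;T)$ up to smoothing errors, which themselves can be bounded by the same type of count over slightly perturbed intervals.

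The next step is the spectral expansion
\[
\sum_{\gm\in\Gm} k(d(z,\gm z)) \= \sum_\ell \hat k(\tau_\ell)\,|\ps_\ell(z)|^2 \;+\; (\text{Eisenstein contribution}),
\]
where $\hat k(\tau) = \prod_j \hat k_j(\tau_j)$ is a product of one-dimensional Selberg transforms. The term $\ell=0$ produces the main term: since $\ps_0$ is constant with $|\ps_0|^2 = 1/\vol(\Gm\backslash\uhp^d)$ and $\hat k(\tau_0) = \int k\, d\m$ on $\uhp^d$, the elementary computations $\int_0^T \sinh r\, dr \sim \tfrac12 e^T$ in each of the $q$ factors and $\int_{A_j}^{B_j} \sinh r\, dr = \cosh B_j - \cosh A_j$ in each of the $e$ factors combine to give exactly $\p^d\,2^e\,\vol(\Gm\backslash\uhp^d)^{-1}\,e^{qT}\prod_{j\in E}(\cosh B_j - \cosh A_j)$, up to lower-order terms in $\dt$ and $\eta$.

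The crux of the proof is estimating the contribution from $\ell\geq 1$ and from the continuous spectrum. Standard one-dimensional Selberg transform bounds give $|\hat k_j(\tau_j)| \ll e^{(1/2+\re\tau_j)T}(1+|\im\tau_j|\dt)^{-N}$ for $j\in Q$, whereas for $j\in E$ the function $\hat k_j$ is uniformly bounded with polynomial decay in $|\tau_j|$ coming from the smoothing at scale $\eta$. Combined with $\re\tau_{\ell,j}\leq \hat\tau<\tfrac12$ (guaranteed by Kelmer--Sarnak or Kim--Shahidi as quoted in the introduction) and with pointwise spectral density bounds of the form $\sum_{|\tau_\ell|\leq R}|\ps_\ell(z)|^2 \ll n(z)\,R^{C}$ (a local Weyl law whose $z$-dependence is absorbed in the factor $n(z)$), the tail of the discrete spectral sum is controlled; the continuous spectrum is bounded in parallel using Plancherel for the Eisenstein series.

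Finally, optimizing in the smoothing parameter $\dt$ balances a smoothing error of order $\dt\cdot n(z)\,e^{qT}$ against a spectral error roughly of order $\dt^{-a}\,n(z)\,e^{(1/2+\hat\tau)qT}$, and the two cases of the theorem arise precisely according to whether the bulk of the spectrum or the exceptional spectrum dominates; the transition at $\hat\tau = q/(2(d+2))$ is the value where the two optimized error exponents coincide. I expect the main obstacle to be carrying out the spectral estimate uniformly in the $E$-directions: because the intervals $[A_j,B_j)$ are fixed rather than expanding, one must control the contribution of the potentially \emph{infinitely many} exceptional eigenvalues with large $\re\tau_{\ell,j}$ in the $E$-coordinates, a genuinely new feature compared with the classical $d=1$ Selberg argument, and this is where the strong spectral gap $\hat\tau<\tfrac12$ is essential to make the sum converge.
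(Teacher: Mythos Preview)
Your plan is essentially the paper's own approach: approximate the characteristic function by a product of smooth one-variable test functions, expand spectrally, extract the main term from $\ell=0$, and control the rest via a pointwise spectral-measure bound (the paper's Theorem~\ref{thm-spm}) combined with decay of the Selberg transforms, then optimize the smoothing scale. Two points need correction, though.

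First, the smoothing scale $\eta$ in the $E$-directions cannot be fixed. With $\eta$ fixed, the upper and lower smooth approximations of $\onebold_{[A_j,B_j)}$ differ in their Selberg transform at $\tau=\tfrac12$ by a quantity of order~$\eta$, so the main terms of the upper and lower spectral expansions differ by $\eta\cdot e^{qT}$, which swamps any error of the form $e^{\alpha qT}$ with $\alpha<1$. The paper takes $Y_E\downarrow 0$ and couples it to $T$ (concretely $Y_E=\vmin^{-\eta}$ for a second auxiliary exponent $\eta>0$), and then optimizes jointly over the $Q$-smoothing exponent $\th$ and this $\eta$; the $E$-smoothing cost enters the spectral error through a factor $Y_E^{-e/2}$, so one cannot send $Y_E$ to zero for free. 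This joint two-parameter optimization is what produces the exponents $\frac{d+1}{d+2}$ and $\frac{1+2\hat\tau+e}{2+e}$.

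Second, your diagnosis of the ``main obstacle'' is slightly off. For $j\in E$ the Selberg transform $\hat k_j$ is uniformly bounded on $(0,\tfrac12]\cup i\RR$ because the interval is fixed; exceptional $\tau_{\ell,j}$ in $E$-coordinates therefore contribute only $O(1)$ factors and cause no convergence problem. The spectral gap $\hat\tau<\tfrac12$ is needed in the $Q$-directions, where $\hat k_j(\tau)\asymp e^{(1/2+\re\tau)T}$ for exceptional $\tau$, to keep these contributions below the main term $e^{qT}$. Convergence of the full spectral sum itself is guaranteed by smoothness of the $k_j$ and the resulting rapid decay of $\hat k_j$ along $i\RR$; the paper packages this via a $d$-dimensional partial summation (Lemma~\ref{lem-sKne}) against the spectral-measure bound, which is the technically heaviest step and is not visible in your sketch.
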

This is a specialization of Theorem~\ref{thm-QEa}, where we allow
somewhat more general conditions on the
components of the vector valued distances. The $E$ in $\oh_{\Gm,E}$ implies
an implicit dependence on the intervals $[A_j,B_j)$ with $j\in E$.
In Theorem~\ref{thm-QEai} we have 
$d\geq 2$ and $1\leq q=\#Q \leq d-1$. As explained above $\hat\tau<\frac12$
holds and
it depends on the relative
sizes of $d$ and $q$ which of the error terms is applicable.

We shall describe now another result pertaining to the more standard lattice 
point problems.
We consider the asymptotic distribution of the 
orbit points $\gm z$ ($\gamma\in \Gamma$)
for a given point
$z\in X$ and a discontinuous group
of motions $\Gm$ acting on a symmetric space $X$.
 In the case when $X=\uhp$ is the
upper half plane many authors have contributed to this problem, for
instance \cite{Hu59}, \cite{Pa75} and~\cite{Iw95}. The best result 
concerning error terms 
is due to Selberg (see the Bombay and G\"ottingen lectures in 
\cite{Selcol}). It gives
\begin{equation}
\begin{aligned}
\# \bigl\{ \gm\in \Gm \;&:\; \dist(\gm z,z) \leq T\bigr\} \=
\frac\pi{\vol(\Gm\backslash \uhp)} \, e^T \\
&\qquad\hbox{}
+ \sum_\ell
\pi^{1/2}\,|\ps_\ell(z)|^2\,\frac{\Gf(\tau_\ell)}{\Gf(\tau_\ell+3/2)}
\, e^{(1/2+\tau_\ell)T}
+ \oh \bigl( e^{\frac 23 T} \bigr)\quad(T\rightarrow\infty)\,.
\end{aligned}
\end{equation}
 The functions $\ps_j$ form an finite
orthonormal system (possibly empty)
of eigenfunctions with eigenvalue $\frac14-\tau_\ell^2$ of the
hyperbolic Laplace operator acting on $L^2(\Gm\backslash \uhp)$
with $0<\tau_\ell<\frac14$. This result holds
for all cofinite 
discrete subgroups of $\PSL_2(\RR)$, cocompact or
not.

Let $\Gm$ now be an irreducible lattice in
$\PSL_2(\RR)^d$, with $d\in \NN$. 
For $z\in \uhp^d$ we define
\begin{equation}
N(z;T):= \#\, \left( {\bf D}(\Gm,z)\cap \{\, x\in \RR^d \ : \ 
\max(x)\le T\, \}\right)
\end{equation}
where $\max(x)$ is the maximum of the absolute values of the coordinates of
the vector $x\in \RR^d$ and ${\bf D}(\Gm,z)$ is defined in (\ref{ddd}). 
We show
\begin{thm}\label{thm-Qai}
Let $\Gm$ be an irreducible lattice in
$\PSL_2(\RR)^d$, with $d\in \NN$ and let $z\in \uhp^d$ be given.
With
$\hat\tau=\hat\tau(\Gm)$ as in~\eqref{tau-hat}, and with the quantity
$n(z)$ as defined in~\eqref{ndef}, the counting function $N(z;T)$ has
the following asymptotic behavior as $T\rightarrow\infty$:
\begin{itemize}
\item
If $0\leq \hat\tau(\Gm) \leq \frac d{2(d+2)}$ \emph{(large spectral
gap)}, then
\[ N(z;T) \= \frac{\pi^d}{\vol(\Gm\backslash\uhp^d)}\, e^{dT} + \oh
_\Gm \biggl( n(z)\exp\biggl( \frac{d+1}{d+2}dT\biggr)
\biggr)\,.\]
\item If $\frac d{2(d+2)}\leq \hat\tau(\Gm)  \leq \frac12$ \emph{(small
spectral gap)}, then
\begin{align*}
N(z;T) &\= \frac {\pi^d}{\vol(\Gm\backslash\uhp^d)} e^{dT}
+ \sum_{\ell\geq1\,,\; \forall_j\; \tau_{\ell,j}\in(0,1/2)}
\bigl|\ps_\ell(z)\bigr|^2 \prod_{j=1}^d\biggl( \frac{\sqrt \pi\,
\Gf(\tau_{\ell,j})}{\Gf(3/2+\tau_{\ell,j})} e^{(1/2+\tau_{\ell,j})T}
\biggr)\\
&\qquad\hbox{}
+ \oh_\Gm\biggl( n(z) \exp\biggl( \frac{2d+2(d-1)\hat\tau}3 T \biggr)
\biggr)
\,.
\end{align*}
\end{itemize}
\end{thm}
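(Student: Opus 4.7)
The plan is to sandwich the hypercube counting function between spectral lattice sums attached to a smoothed product point-pair kernel, and then to bound the spectral side using the Selberg transform together with the strong spectral gap $\hat\tau<\frac12$. For a small parameter $\dt>0$ to be chosen, pick non-negative smooth $h_T^\pm$ with $h_T^-\le\onebold_{[0,T]}\le h_T^+$, agreeing with $\onebold_{[0,T]}$ outside a $\dt$-neighbourhood of~$T$, and set
\[
k_T^\pm(z,w) \= \prod_{j=1}^d h_T^\pm\bigl(d(z_j,w_j)\bigr).
\]
Since $\max_j d(z_j,\gm_j z_j)\le T$ exactly when $d(z_j,\gm_j z_j)\in[0,T]$ for every~$j$, the lattice sums $K_T^\pm(z)\isdef\sum_{\gm\in\Gm}k_T^\pm(z,\gm z)$ satisfy $K_T^-(z)\le N(z;T)\le K_T^+(z)$.

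First I would analyse the Selberg transform $\hat h_T^\pm(\tau)$ on the spectral strip $\tau\in i[0,\infty)\cup[0,\frac12]$: at $\tau=\frac12$ one has $\hat h_T^\pm(\frac12)=\pi e^T+\oh(\dt\,e^T)$; at an exceptional $\tau\in(0,\frac12)$ one obtains $\hat h_T^\pm(\tau)=\sqrt\pi\,\Gf(\tau)/\Gf(\frac32+\tau)\,e^{(1/2+\tau)T}+\oh(\dt\,e^{(1/2+\tau)T})$; and for tempered $\tau\in i\RR$ repeated integration by parts gives $|\hat h_T^\pm(\tau)|\ll e^{T/2}\,\dt^{-A}(1+|\tau|)^{-B}$ with $A,B$ as large as needed. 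The transform of the product kernel factorises as $\prod_{j=1}^d \hat h_T^\pm(\tau_{\ell,j})$, so exceptional coordinates contribute exponential factors while tempered ones contribute summability and a saving of $e^{-T/2}$ per coordinate, at the cost of powers of $\dt^{-1}$.

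Next I would plug this into the pre-trace formula, expanding $K_T^\pm$ against the orthonormal basis $\{\ps_\ell\}$ and, in the non-cocompact case, adding the Eisenstein continuous integrals, which are handled along identical lines. The $\ell=0$ term gives $|\ps_0(z)|^2\prod_j\hat h_T^\pm(\frac12)=\pi^d e^{dT}/\vol(\Gm\backslash\uhp^d)+\oh(n(z)\dt\,e^{dT})$; the finitely many totally exceptional eigenvalues reproduce exactly the explicit secondary main term in the statement; and the rest of the spectral mass is estimated, using a local Weyl-type bound $\sum_{|\tau_\ell|\le R}|\ps_\ell(z)|^2\ll n(z)R^d$ together with the transform bounds above, by a quantity of schematic form $n(z)\,\dt^{-C}\,\exp\bigl((\frac12+\hat\tau)dT\bigr)$.

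Finally I would balance $\dt$ between the smoothing loss $n(z)\dt\,e^{dT}$ and the tempered remainder, which yields the exponent $\frac{d+1}{d+2}dT$ in the large-gap regime $\hat\tau\le\frac d{2(d+2)}$ and $\frac{2d+2(d-1)\hat\tau}3T$ in the small-gap regime, the two expressions coinciding at the threshold $\hat\tau=d/(2(d+2))$. The main obstacle is the uniform treatment of \emph{mixed} exceptional eigenvalues, namely those for which $\tau_{\ell,j}\in(0,\frac12)$ in some coordinates $j$ and $\tau_{\ell,j}\in i\RR$ in the others: the exceptional factors produce genuine exponential growth $e^{(1/2+\tau_{\ell,j})T}$ that must be absorbed by the $e^{-T/2}$ savings from each tempered coordinate. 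Without a strict spectral gap this absorption would fail; it works because $\hat\tau<\frac12$, guaranteed by Kelmer--Sarnak for cocompact lattices and by Kim--Shahidi (combined with Serre's congruence-subgroup theorem for $\SL_2(\Ocal)$) in the non-cocompact case. Verifying that the optimised tempered exponent dominates the explicit exceptional growth precisely at the threshold $\hat\tau=d/(2(d+2))$ is what pins down the dichotomy in the statement.
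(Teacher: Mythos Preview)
Your outline is the same strategy the paper uses: smooth sandwich of the counting function, spectral expansion via the product point--pair kernel, separation of the constant and totally exceptional terms, estimation of the rest via Selberg--transform bounds and a local Weyl law, then optimisation of the smoothing width. Two inaccuracies in your schematic would derail the optimisation if taken literally, though.

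First, in the tempered bound $|\hat h_T^\pm(it)|\ll e^{T/2}\dt^{-A}(1+|t|)^{-B}$ the exponents are \emph{not} independent: each integration by parts buys one power of decay in $|t|$ at the cost of one power of $\dt^{-1}$, so effectively $B=A+\tfrac32$. If $A,B$ were free you could take $A=0$, $B$ huge, and get an error $e^{dT/2}$, far better than the theorem claims. Relatedly, the local Weyl bound on $\uhp^d$ is $\sum_{\tau_\ell\in Y(R)}|\ps_\ell(z)|^2\ll n(z)R^{2d}$, not $R^d$; the exponent $2$ per factor is exactly what forces the coupling above to matter.

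Second, your single ``schematic'' remainder $n(z)\dt^{-C}e^{(1/2+\hat\tau)dT}$ does not produce both stated error exponents from one balancing. What actually happens is that two distinct spectral contributions compete: the fully tempered part, which after partial summation against the $R^{2d}$ Weyl bound gives $\asymp n(z)\,\dt^{-d/2}e^{dT/2}$, and the ``almost totally exceptional'' part with $d-1$ exceptional coordinates and one tempered coordinate, which gives $\asymp n(z)\,\dt^{-1/2}e^{(d/2+(d-1)\hat\tau)T}$. Balancing the smoothing loss $n(z)\dt\,e^{dT}$ against the first yields $\tfrac{d+1}{d+2}dT$; balancing against the second yields $\tfrac{2d+2(d-1)\hat\tau}{3}T$. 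The dichotomy in the theorem is precisely the question of which of these two balanced errors is larger, and the crossover is at $\hat\tau=\tfrac{d}{2(d+2)}$. Your identification of mixed exceptional eigenvalues as the main obstacle is right, but the dominant mixed type is the one with a \emph{single} tempered coordinate, and that is where the $(d-1)\hat\tau$ in the small--gap exponent comes from.
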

Theorem~\ref{thm-Qai} is a special case of Theorem~\ref{thm-Qa}, where
we allow a more general counting quantity than $N(z;T)$. The function
$n(z)$ in~\eqref{ndef} is positive on $\Gamma\backslash\uhp^d$ and
grows when $z$ approaches a cusp.

We note that we do not try to put one distance function on $\uhp^d$,
but work with the vector of the distances in the factors. Partly,
this is because in this way it is easier to apply the spectral
theory. Partly, it reflects the fact that there is not one distance
function on $\uhp^d$ that is preserved by the action of
$\PSL_2(\RR)^d$, but infinitely many. 

For a small spectral gap totally exceptional eigenfunctions appear
explicitly in the asymptotic estimate. Of course, some of these
exceptional contributions may happen to be absorbed by the error
term. For large spectral gaps, further improvement of our knowledge
of $\hat\tau(\Gm)$ does not improve the quality of the error term in
our asymptotic formula. This holds in particular for the congruence
case, in which we know that $\hat\tau\leq \frac19$. The main term is
always larger than the error term, even if we would have
$\hat\tau(\Gm)=\frac12$
(no spectral gap).

The case $d=1$ in Theorem~\ref{thm-Qai} concerns lattice point
counting for groups acting on the upper half plane. The best known
error term $\oh(e^{\frac 23 T})$ coincides with the error term in
Theorem~\ref{thm-Qai} for $d=1$. The papers \cite{Gu80}, \cite{Ba82},
\cite{LP82}, \cite{Le87} and \cite{BMW99} treat lattice point
counting for other symmetric spaces of rank one. The situation in
this paper, with rank~$d$, falls within the scope of \cite{GN7}
and~\cite{GN10}, in which Gorodnik and Nevo consider counting of
lattice points over quite general families of sets in quotients of
more general Lie groups. Their error terms for $\Gm\backslash\uhp^d$
are weaker than those in Theorem~\ref{thm-Qai}. They get 
$\oh\bigl(\exp\bigl( \frac 56 T\bigr)
\bigr)$ in the case $d=1$ and $\Gm$ not cocompact, and 
$\oh \big( \exp\bigl( \frac{4d+1}{4d+2}dT \bigr)
\bigr)$ for the Hilbert modular case and $d\ge 2$, 
which should be compared with Selberg's bound
$\oh \bigl( \frac 23 T\bigr)\bigr)$ for $d=1$, and with
$\oh \bigl( \exp\bigl( \frac{d+1}{d+2}dT \bigr)\bigr)$ in
Theorem~\ref{thm-Qai} for general~$d$. We emphasize that the class of
counting problems considered by Gorodnik and Nevo is much larger than
ours. They consider quite general families $t\mapsto G_t$ of regions
in much more general groups than $\PSL_2(\RR)^d$, that have to grow
in all directions. They use an ergodic method that can be applied in
all these cases, without using more spectral information than the
size of the spectral gap. The counting in Theorem~\ref{thm-QEai} is
over regions that are constant in some coordinate directions, and
hence do not satisfy the conditions in~\cite{GN7}.

In the proofs we apply the spectral theory of automorphic forms. We
give the main proof in \S\ref{sect-pfs}. The approach is sketched in
the introduction to~\S\ref{sect-st-se} and in Subsections
\ref{sect-mp} and~\ref{sect-ap}. The idea is that the sums $N(z;T)$
and $N_E(z;T)$ are replaced by smooth approximations. This smoothness
ensures that the new quantities have a spectral decomposition that
converges pointwise. In this spectral expansion we single out the
terms corresponding to the constant functions and to totally
exceptional eigenfunctions, if these are present. These give the main
terms in the asymptotic expansion. The remaining part of the
spectral decomposition is estimated 
using
the estimate of the spectral measure in Theorem~\ref{thm-spm}.
We use an approach similar to one
in~\cite{Iw95} that makes explicit the dependence on the
point~$z\in\uhp^d$.

For the handling of the spectral decomposition the Selberg transform
discussed in~\S\ref{sect-St} is essential. In \S\ref{sect-eSt} we
prove the estimates and other facts that we need. In the proof of the
main theorems we also use some estimates of the counting function
(Lemmas \ref{lem-apl} and~\ref{lem-ape}) obtained without the use of
spectral theory. 
A main role in the proof is played by an estimate of
the spectral function given in Theorem~\ref{thm-spm}, proved
in~\S\ref{sect-spm}.

%%%%%%%%%%%%%%%%%%%%%%%%%%%%%%%%%%%%%%%%%%%%%%%%%%%%%%%%%
\section{Lie groups and discrete subgroups}\label{sect-Lgdsg}
Let $G$ be the Lie group $\PSL_2(\RR)^d$ for some integer $d\geq 1$.
The group $G$ acts on the product $\uhp^d$ of upper half planes by
fractional linear transformations in each factor. We will use the
letter~$j$ to index these factors. $G$ leaves invariant the vector
valued distance function
\begin{equation}
\dist(z,w) \= \bigl( \dist_j(z_j,w_j) \bigr)_{j\in \{1,\ldots,d\}}\,,
\end{equation}
where $\dist_j$ is the hyperbolic distance in the $j$-th factor. By
$\matc abcd$ we denote the class in $G$ represented by
$\rmatc abcd\in \SL_2(\RR)$.

We consider an irreducible lattice $\Gm\subset G$, as described in
Definition~5.20 and Corollary~5.21 of~\cite{Ra72}. So, for each of
the genuine subproducts $H$ of $\PSL_2(\RR)^d$ the projection
$\Gm\rightarrow H$ has dense image. In particular
$\Gm\backslash \uhp^d$ has finite volume, and the projection to each
of the factors is injective on~$\Gm$. (See also Corollary~5.23 in
{\sl loc.\ cit.})

Hilbert modular groups $\PSL_2(\Ocal)$ and their subgroups of finite index,
mentioned in the introduction, are examples. Cases for which
$\Gm\backslash G$ is compact can be derived from  quaternion algebras
$\Hcal$ over a totally real number field $F$ for which there is a 
non-empty set
$S$ of infinite places $j$ for which the tensor product
$F_{\!j} \otimes _F \Hcal$, with the completion $F_{\!j}$, is a
division algebra. Suppose that $\#S=d>0$. Let $\Hcal_\Ocal$ be an
order in $\Hcal$. Then the elements of reduced norm~$1$ in
$\Hcal_\Ocal$ have as their image in $\prod_{j\notin S} \PSL_2(F_{\!j})$
a cocompact discrete subgroup satisfying the assumptions above.

In the case $d=1$ most of the subgroups with finite index in
$\PSL_2(\ZZ)$ are not the image of a congruence subgroup of
$\SL_2(\ZZ)$. Moreover there many are irreducible discrete subgroups
of $\PSL_2(\RR)$ that are not commensurable to $\PSL_2(\ZZ)$. For
$d\geq 2$, Margulis has shown that all irreducible discrete subgroups
of $\PSL_2(\RR)^d$ are arithmetic, i.e., commensurable to a Hilbert
modular group or to a unit group of a quaternion algebra. (See
Theorem (1.11) in Chap.~IX of~\cite{Ma91}, or the discussion in \S7
of~\cite{Se70}.) Serre, \cite{Serre}, has shown that all subgroups
of finite index in $\SL_2(\Ocal)$ are congruence subgroups. So all
non-cocompact irreducible lattices contain a conjugate of a
congruence subgroup as a subgroup with finite index.

\section{A priori estimates}\label{sect-ape}
{}From here on we follow the usual practice of not working directly
with the hyperbolic distance $\dist$ on~$\uhp$, but with
\begin{equation}\label{u-d}
\begin{aligned}
u(z,z') &\= \frac{|z-z'|^2}{4yy'} \= \bigl( \sinh \txtfrac12
\dist(z,z')
\bigr)^2\,,\\
\dist(z,z')&\= 2\log\biggl( \sqrt{u(z,z')}+\sqrt{u(z,z')+1}\biggr)\,.
\end{aligned}
\end{equation}
For $U,V\in [0,\infty)^d$ such that $U_j<V_j$ for all~$j$, we consider
the counting quantity
\begin{equation}\label{cntdef}
\cnt(U,V;z) \= \#\bigl\{ \gm\in \Gm\;:\; U_j\leq u\bigl((\gm
z)_j,z_j\bigr)<V_j\text{ for all }j\bigr\}\,.
\end{equation}
To relate this to the quantities $N(z;T)$ and $N_E(z;T)$ used in the
introduction we will use that $\dist\downarrow 0$ corresponds to
$u \downarrow 0$ in such a way that
\begin{equation}
u\=\frac{\dist^2}4 + \oh(\dist^4)\,,\qquad \dist\= 2\sqrt u +
\oh(u^{3/2})\,,
\end{equation}
and that $u\rightarrow\infty$ corresponds to $\dist\rightarrow\infty$
in such a way that
\begin{equation}
u\= \frac14 e^\dist + \oh(1)\,,
\qquad \dist\= \log u + \log 4 + \oh(u^{-1})\,.
\end{equation}
\medskip

We will need a starting point for the estimation of $\cnt(U,V;z)$. We
give two estimates for the counting function. The first is based on a
simple volume argument. The second is important for the dependence of
our results on the geometry of $\Gm\backslash \uhp^d$. As
$z\in\uhp^d$ approaches a cusp there are more and more $\gm\in \Gm$
for which $\gm z $ is near~$z$.\smallskip

\begin{lem}\label{lem-apl}For $z\in \uhp^d$ and $U,V\in [0,\infty)^d$
such that $U_j<V_j$ for all~$j$, we have
\[ \cnt(U,V;z) \ll_{\Gm,z} \prod_j (V_j-U_j+1)\,.\]
\end{lem}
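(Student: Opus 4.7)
The approach is a volume/packing argument based on the proper discontinuity of the $\Gm$-action at $z$. By properness and finiteness of the stabilizer $\Gm_z$, there exists $r_0 = r_0(\Gm, z) > 0$ such that the product-hyperbolic balls $B(\gm z, r_0) \subset \uhp^d$ are pairwise disjoint as $\gm$ ranges over coset representatives of $\Gm_z$ in $\Gm$; each has a fixed positive hyperbolic volume $v_0$.

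I plan to partition the counting region $R = \prod_j \{w_j : U_j \le u(w_j, z_j) < V_j\}$ into ``unit $u$-boxes'' by slicing the $u$-annulus $A_j$ in each factor into at most $V_j - U_j + 2$ consecutive unit-$u$ slabs $\{u \in [a, a+1)\}$ and taking products across factors. This gives at most $C_d \prod_j (V_j - U_j + 1)$ such boxes $R_{\mathbf k}$. Using the polar-coordinate identity $\sinh r \, dr \, d\theta = 2 \, du \, d\theta$ in each factor, each $R_{\mathbf k}$ has product hyperbolic volume exactly $(4\pi)^d$, independent of $\mathbf k$. The lemma then reduces to establishing a uniform per-box bound
\[
\#\{\gm \in \Gm : \gm z \in R_{\mathbf k}\} \le M(\Gm, z),
\]
valid uniformly in $\mathbf k$. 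Granted this, summing over the $C_d \prod_j (V_j - U_j + 1)$ boxes and multiplying by $\#\Gm_z$ gives the claim.

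The main obstacle is exactly this uniform per-box bound. A naive fattening of the unit $u$-slab $\{u \in [a, a+1)\}$ by the fixed hyperbolic radius $r_0$ widens its $u$-interval by $\asymp \sqrt{a}\sinh(r_0)$ for large $a$, so the simple volume argument yields a bound depending on the slab's position rather than a $\mathbf k$-uniform constant. The way around this is to replace the hyperbolic ball $B(\gm z, r_0)$ around each orbit point by an adapted polar-coordinate box, whose $u$-extent and angular extent in each factor are tuned to the position of the orbit point so that (i) distinct such boxes stay disjoint via the positive minimum separation $\delta(\Gm, z) := \min_{\gm \notin \Gm_z} d_{\uhp^d}(\gm z, z) > 0$, and (ii) each box has product hyperbolic volume bounded below by a constant depending only on $\Gm$ and $z$. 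The careful geometric verification that the collection of such adapted boxes attached to the orbit points in $R_{\mathbf k}$ sits inside a fattening of $R_{\mathbf k}$ of product hyperbolic volume $\oh_{\Gm, z}(1)$ is the delicate technical step.
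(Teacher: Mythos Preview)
Your diagnosis of the fattening problem is correct and is in fact sharper than the paper's own treatment. The paper does not decompose into unit $u$-boxes at all; it applies the packing argument directly to the full multi-annulus $A(U,V)$, asserting that the disjoint $u$-balls $B(\gm z,\dt)$ around orbit points in $A(U,V)$ all lie in $A(U(\dt),V(\dt))$ with $V(\dt)_j=V_j+\oh(1)$ and $U(\dt)_j=U_j-\oh(1)$. But this is exactly the step you flagged: since $\cosh\dist=1+2u$, moving $\dist$ by a fixed amount changes $u$ multiplicatively, so $V(\dt)_j-V_j\asymp\sqrt\dt\,V_j$ for large $V_j$, not $\oh(1)$. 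Read literally, the paper's argument yields only $\cnt(U,V;z)\ll_{\Gm,z}\prod_j(V_j+1)$, which matches the stated bound only when $U=0$.

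Your proposed remedy via adapted polar boxes cannot close the gap. In a single factor the slab $\{a\le u<a+1\}$ is a hyperbolic annulus of $\dist$-width $\asymp a^{-1}$ and circumference $\asymp a$; points at mutual hyperbolic distance $\ge\e_0$ in such a thin strip can number up to $\asymp a/\e_0$, and no family of disjoint boxes around them, each of hyperbolic area bounded below, can fit inside a region of area $\oh_{\Gm,z}(1)$. So a pure packing argument cannot produce the uniform per-box bound $M(\Gm,z)$ you need. Indeed the full statement for general $U$ is suspect: already for $d=1$, $\Gm=\PSL_2(\ZZ)$, $z=i$, one has $4u(\gm i,i)+2=a^2+b^2+c^2+d^2$, and the number of $\gm\in\SL_2(\ZZ)$ with $a^2+b^2+c^2+d^2=N$ equals (for $N\equiv2\bmod4$) exactly $r_2(N-2)\,r_2(N+2)$, which is not expected to be bounded.

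This is harmless for the paper. The lemma is invoked only once, to guarantee absolute convergence of $K(z,w)=\sum_{\gm\in\Gm}\prod_jk_j\bigl(u((\gm z)_j,w_j)\bigr)$ for compactly supported $k_j$, and that requires only the case $U=0$. For $U=0$ the fattening issue disappears: $V(\dt)_j\ll_\dt V_j+1$, and the packing argument gives $\cnt(0,V;z)\ll_{\Gm,z}\prod_j(V_j+1)$ as needed.
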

\begin{proof}For $w\in \uhp^d$ and $\dt>0$ we put
\[ B(w,\dt) \= \bigl\{ v\in \uhp^d\;:\; \forall_j\;
u(w_j,v_j)<\dt\bigr\}\,.\]

Let $z\in \uhp^d$ be given. The subgroup $\Gm_z$ of $\Gm$ fixing~$z$
is finite. See, {\sl e.g.}, Remark~2.14 in~\cite{Fr80}. By the
discontinuity of the action there is $\dt>0$ such that
$B(z,\dt) \cap B(\gm z,\dt)=\emptyset$ for all
$\gm\in \Gm\setminus\Gm_z$. The $\Gm$-invariance of $u$ implies that
$B(\gm_1,z,\dt) \cap B(\gm_2 z,\dt)=\emptyset$ for all
$\gm_1,\gm_2\in \Gm$ for which $\gm_1 z \neq \gm_2 z$.

For $P,Q\in [0,\infty)^d$ denote by $A(P,Q)$ the multi-annulus
\[ \bigl\{ v\in\uhp^d\;:\; \forall_j\; P_j\leq u\bigl( v_j,z_j
\bigr)<Q_j\bigr\}\,.\]
The pairwise disjoint sets $B(\gm z,\dt)$ with
$\gm z \in \Gm z \cap A(U,V)$ are contained in a slightly larger
multi-annulus $A\bigl( U(\dt) ,V(\dt) \bigr)$ with
$U(\dt)_j = U_j-\oh(1)$ and $V(\dt)_j = V_j+\oh(1)$. See~\eqref{u-d}.
Thus we have
\[ \#\bigl( \Gm z \cap A(U,V) \bigr) \;\leq \; \vol\bigl(
A(U(\dt),V(\dt)) \bigr) / \vol(B(z,\dt)) \ll_\dt
\prod_j(V_j-U_j+1)\,.\](The volume computation is easiest in a
distance coordinate $u=u(z,i)$ and an angular coordinate $\phi$. Then
$d\mu$ on $\uhp$ is given by $4 \, du\, d\phi$. See
(1.17)
in~\cite{Iw95}.)
Since
$\cnt(U,V;z) = \#\Gm_z \,\cdot\, \#\bigl( \Gm z\cap A(U,V) \bigr)$,
this proves the lemma.
\end{proof}

Next we aim at an estimate of $\cnt(z,w;0,V)$ when all $V_j$ are
small. In this estimate, the dependence on $z$ will be explicit. In
order to do this, we take into account some facts concerning the
geometry of the action of~$\Gm$ on~$\uhp^d$. (The approach is
motivated by that in the proof of Corollary~2.12 in~\cite{Iw95}.)

The assumptions on $\Gm$ imply that we can find a fundamental domain
$\fd$ that is compact in the case of cocompact $\Gm$, and is
contained in a union of Siegel domains otherwise:
\begin{align*} \fd &\subset \bigcup_\k g_\k \siegel_\k\,,
\displaybreak[0]
\\
\siegel(X_\k,Y_\k,V_\k) &\= \left\{z\;:\; x_j\in[-X_\k,X_\k]\text{ for
all }j\,,\; y_1y_2\cdots y_d\geq Y_\k\,,\right.\\
&\qquad\qquad\left. V_\k^{-1} \leq \frac {y_j}{y_{j+1}} \leq
V_\k\text{ for }1\leq j \leq d-1 \right\}\,\end{align*}
where $\k$ runs through a finite set of representatives
$\k=g_\k\infty$ of the $\Gm$-classes of cusps, with $g_\k \in G$,
$X_\k, Y_\k>0$ and $V_\k>1$. (See \cite{Fr80}, Chap.~I, \S2.)
Enlarging $Y_\k$ decreases $\siegel(X_\k,Y_\k,V_\k)$. There exists
$A>0$ such that the $g_\k \siegel_\k(X_\k,A,V_\k)$ are disjoint. For
each $B\geq A$ there is a compact set $C_B$ such that
\begin{equation}
\label{fddc}
\fd \subset C_B \cup \bigcup_\k g_\k
\siegel(X_\k,B,V_\k)\,.\end{equation}

We fix a fundamental domain and a disjoint decomposition of it induced
by~\eqref{fddc}, and define $\Gm$-invariant functions
$\y_1,\ldots,\y_d$ on $\uhp$ determined by the requirement that
for~$z\in \fd$:
\begin{equation}
\y_j(z) \= \begin{cases}
1&\text{ if $\Gm$ is cocompact, or if $z\in C_A$}\,,\\
\im(g_{\k,j}^{-1}\,z_j)&\text{ if }z\in g_\k \siegel(X_\k,A,V_\k)\,.
\end{cases}
\end{equation}
The product of the $\y_j(z)$ measures how far up in a cusp sector the
point $z$ is situated. Note that the $\y_j$ may be discontinuous, but
are bounded away from~$0$.

For $T \in (0,\infty)^d$ we put
\begin{equation}\label{nTdef}
n_j(T_{\!j},z) \= \max(1,\y_j(z)/T_{\!j})\,,\qquad n(T,z) \= \prod_j
n_j(T_{\!j},z)\,.
\end{equation}
We take
\begin{equation}
\label{ndef}n(z)\=n(\onebold,z) \= \prod_j \max\left( 1,\y_j(z)
\right)\,,
\end{equation}
with $\onebold=(1,1,\ldots,1)\in \RR^d$. The quantity $n(z)$ occurs in
the error terms in the final estimates in Theorems \ref{thm-Qa}
and~\ref{thm-QEa}, describing the dependence on~$z\in \uhp^d$.

\begin{lem}\label{lem-ape}For all sufficiently small
$\dt_1>0,\ldots, \dt_d>0$, we have for $0\in \RR^d$ and
$\dt=(\dt_j)_j$:
\[ \cnt(z;0,\dt) \; \ll_\Gm\; n\bigl(\dt^{-1/2},z\bigr)\,,\]
where $\dt^{-1/2}=(\dt_j^{-1/2})_j$.
\end{lem}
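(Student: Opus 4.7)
Since $u$ and the $\y_j$ are $\Gm$-invariant, I may assume $z\in\fd$ and split via~\eqref{fddc}, $\fd\subset C_B\cup\bigcup_\k g_\k\siegel(X_\k,B,V_\k)$, with $B$ chosen large enough for what follows. On the compact piece $C_B$, $\y_j(z)=1$ gives $n(\dt^{-1/2},z)=1$ for $\dt_j\le 1$, and $\cnt(z;0,\dt)\ll_\Gm 1$ follows from proper discontinuity: the finite stabilizers $\Gm_z$ are uniformly bounded on $C_B$, and $u((\gm z)_j,z_j)$ is uniformly bounded below away from $\Gm_z$, so for $\dt$ small only $\Gm_z$ can contribute. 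The remainder of the argument is devoted to a cusp sector $z\in g_\k\siegel(X_\k,B,V_\k)$.

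\textbf{Reduction to the cusp stabilizer.} Conjugating to place the cusp at infinity, set $\Gm':=g_\k^{-1}\Gm g_\k$ and $z':=g_\k^{-1}z$, so $\im z'_j=\y_j(z)$; then $\cnt(z;0,\dt)$ equals the count of $\gm'\in\Gm'$ with $u((\gm'z')_j,z'_j)<\dt_j$ for every~$j$. For $B$ large, Siegel-set disjointness gives $\gm'\siegel(X_\k,B,V_\k)\cap\siegel(X_\k,B,V_\k)=\emptyset$ for every $\gm'\in\Gm'\setminus\Gm'_\infty$, so such $\gm'$ have $\dist(z',\gm'z')$ bounded below uniformly in $z'$, and for $\dt$ sufficiently small do not contribute. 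An element of $\Gm'_\infty$ has the form $\matc\e\bt 0{\e^{-1}}$ with $\e$ in a discrete rank-$(d-1)$ unit subgroup of $(\RR_{>0})^d$ (totally positive units from the underlying arithmetic data) and, for each $\e$, $\bt$ in a coset of a rank-$d$ translation lattice $\Ld_\k\subset\RR^d$. From $(\gm'z')_j=\e_j^2 z'_j+\e_j\bt_j$ and by estimating the numerator with its imaginary part,
\[
u((\gm'z')_j,z'_j)\=\frac{\bigl|(\e_j^2-1)z'_j+\e_j\bt_j\bigr|^2}{4\e_j^2(\im z'_j)^2}\;\ge\;\frac{(\e_j^2-1)^2}{4\e_j^2}\,,
\]
so $u<\dt_j$ for every $j$ forces each $\e_j$ to within $O(\sqrt{\dt_j})$ of~$\pm1$, and the discreteness of the unit group in $(\RR^\times)^d$ forces $\e=\pm\onebold$ for small $\dt$, uniformly in $z$.

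\textbf{Translation count.} With $\e=\onebold$, the condition becomes $|\bt_j|<2\y_j(z)\sqrt{\dt_j}=:L_j$, and it remains to prove the lattice-point bound
\[
\#\{\bt\in\Ld_\k:|\bt_j|\le L_j\text{ for every }j\}\;\ll_\Gm\;\prod_j\max(1,L_j)\;\asymp\;n(\dt^{-1/2},z)\,.
\]
The lattice $\Ld_\k$ comes from the arithmetic structure of $\Gm$ (Margulis arithmeticity for $d\ge2$; trivial for $d=1$), so it has fixed covolume and the norm lower bound $|\bt_1\cdots\bt_d|\ge c_\k>0$ for every $\bt\in\Ld_\k\setminus\{0\}$. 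Combined with Dirichlet's unit theorem, which controls the size of each unit orbit inside a box, a standard geometry-of-numbers estimate yields the stated bound.

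\textbf{Main obstacle.} The delicate step is exactly this lattice-point count: $\prod_j\max(1,L_j)$ is much smaller than the trivial volume estimate $\operatorname{vol}(K)/\operatorname{covol}(\Ld_\k)\asymp\prod_jL_j$ when some $L_j$ are large while others are small, and the sharpening depends essentially on the arithmetic structure of $\Ld_\k$ -- a generic rank-$d$ lattice in $\RR^d$ can have arbitrarily many points in a thin box, so the norm lower bound (together with the Dirichlet rank of units) is what makes the bound go through. A secondary point is uniformity in~$z$: the smallness threshold on $\dt$ needed to discard non-stabilizer $\gm'$ and non-identity units $\e$ must be independent of $z$, which is secured by Siegel-set disjointness and by the ratio constraints $y_j/y_{j+1}\in[V_\k^{-1},V_\k]$ that keep the $\y_j(z)$ mutually comparable.
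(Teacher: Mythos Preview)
Your architecture matches the paper's: reduce to the fundamental domain, dispose of the compact piece $C_B$ by proper discontinuity, reduce to the cusp stabiliser $\Gm'_\infty$ on each Siegel sector, then separate the diagonal (unit) part~$\e$ from the unipotent (translation) part and count each. The paper does not force $\e=\pm\onebold$ but merely observes that the constraint $|\log|\e_j||\ll\sqrt{\dt_j}$ confines $\log|\e|$ to a bounded region of the log-unit lattice, leaving $\oh_\Gm(1)$ possibilities; your sharper conclusion for sufficiently small~$\dt$ is fine and does not change the outcome.

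Where you go wrong is precisely in the paragraph you flag as the main obstacle. First, the inequality is backwards: when some $L_j<1$ and others are large one has $\prod_j\max(1,L_j)\geq\prod_jL_j$, so your target bound is \emph{weaker} than the volume heuristic, not stronger. Second, and more to the point, the bound
\[
\#\bigl\{\bt\in\Ld:|\bt_j|\leq L_j\text{ for all }j\bigr\}\;\ll_\Ld\;\prod_j(1+L_j)
\]
holds for \emph{every} fixed lattice $\Ld\subset\RR^d$ and requires no arithmetic input whatsoever: pick a bounded fundamental domain $F$ for $\Ld$ with sup-norm diameter~$D$; the disjoint translates $\bt+F$ for $\bt$ in the box all lie in $\prod_j[-L_j-D,L_j+D]$, whence the count is at most $\prod_j(2L_j+2D)/\mathrm{covol}(\Ld)\ll_\Ld\prod_j(1+L_j)$. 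The paper simply asserts $\oh\bigl(\prod_j(1+\sqrt{\dt_j}\,y_j)\bigr)$ without comment because this is the reason. Your worry that ``a generic rank-$d$ lattice can have arbitrarily many points in a thin box'' is valid only when the lattice varies; here $\Ld_\k$ is fixed by~$\Gm$ and the implicit constant may depend on it. The norm lower bound and Dirichlet's unit theorem are true but are not what is doing the work.

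A smaller point: Siegel-set disjointness $\gm'\siegel\cap\siegel=\emptyset$ does not by itself give a uniform lower bound on $\dist(z',\gm'z')$, since $\gm'z'$ could sit arbitrarily close to the boundary of~$\siegel$. The actual reason (which the paper also leaves implicit) is that for $\gm'=\matc abcd\notin P_\infty$ one computes $u((\gm'z')_j,z'_j)=|{-c_jz_j'^2+(a_j-d_j)z'_j+b_j}|^2/(4(\im z'_j)^2)$, and for $z'$ in a high Siegel set with all $c_j\neq0$ this is bounded below in terms of $\Gm$ alone.
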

\begin{proof}
It suffices to consider $z$ in a fundamental domain $\fd$ chosen as
indicated above. As long as $z$ stays in a compact region, the value
of $\cnt(z;0,\dt)$ is at most the maximal order of totally elliptic
elements of $\Gm$ provided we take the $\dt_j>0$ sufficiently small.
In the previous proof we have seen that this maximal order is bounded
for each~$\Gm$. This proves the lemma for cocompact~$\Gm$.

For other $\Gm$ we fix $B>2A$. If $z\in C_B$, with $C_B$ as in
\eqref{fddc}, we have $\cnt(z;0,\dt)=\oh_\Gm(1)$ for all sufficiently
small $\dt$. Suppose now that $z\in g_\k \siegel(X_\k,B,V_\k)$. If
the $\dt_j$ are sufficiently small, then all $\gm\in \Gm$ such that
$u\bigl((\gm z)_j,z_j\bigr) \leq \dt_j$ lie in $\Gm\cap P_\k$, where
$P_\k$ is the parabolic subgroup fixing~$\k$.

For the remaining computations, we can assume that $\k=\infty$ and
$g_\k=1$. Denote by $N=\bigl\{ \matc1x01\in G \bigr\}$ the unipotent
radical of~$P_\infty = \left\{ \matc tx0{1/t}\in G\right\}$. Elements
in $N_\Gm=N\cap\Gm \subset \Gm_\infty = P_\infty\cap\Gm$ have the
form $\matc 1\om01$ with $\om$ running through a lattice
$\Ld \subset\RR^d$. The quotient $\Gm_\infty/N_\Gm$ is represented by
elements of the form $\matc \e{\al/\e}0{1/\e}$, where $\al\bmod\Ld$
is determined by~$\e$, and where $\e$ runs through a discrete
subgroup of $(\RR^\ast)^d$ such that $\e^2 \Ld =\Ld$, and such that
the $\bigl(\log|\e_1|,\ldots,\log|\e_d|\bigr)$ run through a lattice
in the hyperplane $\sum_j x_j=0$ in~$\RR^d$.

We need a bound for the number of $\gm\in \Gm_\infty$ with
\[ u\bigl( \e_j^2(z_j+\al_j),z_j\bigr)\= \frac{(\e_j^2-1)^2
y_j^2+\bigl(
(\e_j^2-1)x_j+\al_j\bigr)^2}{4\e_j^2y_j^2} \; \leq \; \dt_j\,,\]
for $\dt_j\in (0,1)$ for all $j$. Hence the following quantities have
to be non-negative:
\begin{equation}\label{rhs}4\dt_j
\e_j^2y_j^2-(\e_j^2-1)^2y_j^2\,.\end{equation}
This implies
\[ \log\bigl( 1+2\dt_j - 2 \sqrt{\dt_j+\dt_j^2} \bigr) \;\leq 2\;
\log|\e_j| \;\leq\; \log\bigl( 1+2\dt_j + 2 \sqrt{\dt_j+\dt_j^2}
\bigr)\,.\]
Since $\log|\e|$ runs through a lattice in a hyperplane in $\RR^d$,
this leaves $\oh(1)$ possibilities for the choice of~$\e$. Taking the
maximum of the quantity in \eqref{rhs}, we find for all~$j$:
\[ \bigl| (\e_j^2-1) x_j + \al_j \bigr| \; \leq \;
2\sqrt{\dt_j+\dt_j^2}\; y_j\,.\]
Since $\al$ runs through a coset modulo the lattice~$\Ld$, this gives
at most
\[\oh\biggl( \prod_j (1+\sqrt{\dt_j}\; y_j)\biggr)\ll \prod_j
n_j(\dt_j^{-1/2},z)\]
 possibilities for the choice of $\om$.

For $z\in g_\k \siegel(X_\k,B,V_\k)$ replace $y_j$ by
$\im g_{\k,j}^{-1} z_j$. Together with the bound $\oh(1)$ for
$z\in C_B$, we get the statement in the lemma.
\end{proof}

\section{The Selberg transform and spectral
estimates}\label{sect-st-se}

If $k_1,\ldots,k_d$ are bounded functions on $[0,\infty)$ with compact
support, then the sum
\begin{equation}
K(z,w) \= \sum_{\gm\in\Gm} \prod_j k_j\bigl( u((\gm z)_j,w_j\bigr)
\end{equation}
converges absolutely, and defines a function on
$(\Gm\backslash\uhp^d) \times(\Gm\backslash\uhp^d) $. If we take each
$k_j$ equal to the characteristic function of the interval
$[U_j,V_j)$, then
\[ K(z,z) \= \cnt(U,V;z)\,.\]

It will turn out preferable to use smooth $k_j$, so we will take for
the $k_j$ approximations of those characteristic functions. In this
case $K(z,z)$ is only an approximation of $\cnt(U,V;z)$, but its
spectral expansion as an element of $L^2(\Gm\backslash\uhp^d)$
converges pointwise, and we can write
\[ K(z,z) \= K_\expl(z,z)+ K'(z,z)\]
for each $z\in \uhp^d$, where $K_\expl(z,z)$ is the contribution to
the spectral expansion of a finite number of $\ps_\ell$ (among them
$\ps_0$), and where $K'(z,z)$ is the remainder. The main idea is that
$K_\expl(z,z)$ will yield the explicit terms in the asymptotic
expansion of $\cnt(U,V;z)$, and that estimates of the difference
$K(z,z) - \cnt(U,V;z)$ and of $K'(z,z)$ will contribute to the error
term.

To carry this out, we have to see how the spectral expansion depends
on the functions~$k_j$. That leads us to a study of the Selberg
transform (\S\ref{sect-St}). We also have to know what is the size of the
contributions of various parts of the spectrum
(\S\ref{sect-sm}). In this section we state the results that we need,
and refer for most proofs to \S\ref{sect-eSt} and~\S\ref{sect-spf}.

\subsection{The Selberg transform}\label{sect-St}
We can do most of the work on $\uhp^d$ factor by factor. So we work
first on~$\uhp$.

Functions $k$ on $[0,\infty)$ yield kernel operators on functions $f$
on~$\uhp$:
\begin{equation}\label{convdef}
L_k f(z) \= \int_\uhp k(u(z,w))\, f(w)\, d\mu(w)\,,
\end{equation}
where $d\mu(w) = \frac{d\re w\; d\im w}{(\im w)^2}$ is the invariant
measure associated to the Riemannian metric on~$\uhp$. We take
$k\in C_c^\infty[0,\infty)$. (This implies in particular that all
derivatives are well defined and continuous at $u=0$.) We assume that
the function $f$ is continuous. That suffices for the convergence
in~\eqref{convdef}.

The \emph{Selberg transform} associates to the function $k\in
C_c^\infty[0,\infty)$ an even holomorphic function $h$ on~$\CC$,
given by the following three steps:
\begin{alignat}2\label{Se}
q(p) &\= \int_p^\infty k(u)\, \frac{du}{\sqrt{u-p}}\,,& \text{ for
}&p\geq 0\,,
\displaybreak[0]
\\
\nonumber
g(r) &\= 2q\left( (\sinh(r/2)) ^2 \right)\,,& \text{ for }&r\in \RR\,,
\displaybreak[0]\\
\nonumber
h(\tau) &\= \int_{-\infty}^\infty e^{r\tau} g(r)\, dr\,,& \text{ for
}& \tau\in \CC \,.
\end{alignat}
See, e.g., \cite{Iw95}, p.~33, but note that Iwaniec uses $ir$ as the
variable in~$h$. (See also \cite{Se62}.) The relation can be
described in one step:
\begin{equation}\label{h-alt}
h(\tau ) \= \int_\uhp k(u(z,i)) \, y^{\frac12-\tau}\, d\mu(z)\,,
\end{equation}
which can be made more explicit by use of a hypergeometric function
\begin{equation}
h(\tau) \= 4\pi \int_0^\infty k(u)\,
\hypgeom(\txtfrac12+\tau,\txtfrac12-\tau;1;u)\, du\,.
\end{equation}
See (1.62') and the proof of Theorem~1.16 in~\cite{Iw95}. In fact, $h$
is the \emph{spherical transform} of $k$. See, e.g., \cite{La75},
Chap.~V, \S4. We have in particular
\begin{equation}\label{h1/2}
h\bigl(\txtfrac12\bigr) \= 4\pi\int_0^\infty k(u)\, du\,.
\end{equation}

The Selberg transform has the important property that if
 $\Dt f = \bigl( \frac14-\tau^2\bigr)f$, then
\begin{equation}
L_k f \= h(t)\, f
\end{equation}
(Theorem~1.16 in~\cite{Iw95}).
\medskip

Next we consider $h_1,\ldots,h_d\in C_c^\infty[0,\infty)$, and form
the kernel function
\begin{equation}
\label{kdef}
k(z,w) \= \prod_j k_j(u(z_j,w_j))
\end{equation}
on $\uhp^d\times\uhp^d$. Thus we have the operator
\begin{equation}
L_k f\, (z) \= \int_{\uhp^d} k(z,w)\, f(w)\, d\mu(w)\,,
\end{equation}
with $d\mu=\prod_j d\mu_j$ the product of the invariant measures. This
converges absolutely if $f$ is continuous on~$\uhp^d$. If moreover we
have $\Dt_j f = \left(\frac14-\tau_j^2\right)f$ for the local Laplace
operators $\Dt_j = - y_j^2\frac{\partial^2}{\partial x_j^2} -
y_j^2\frac{\partial^2}{\partial y_j^2}$, then
\begin{equation}\label{khDt}
\Dt_j\bigl( L_k f\bigr)\= h_j(\tau_j) f\qquad\text{ for each }j\,,
\end{equation}
where $h_j$ is the Selberg transform of~$h_j$.

By Lemma~\ref{lem-apl} the sum
\begin{equation}
\label{Kdef}
K(z,w) \;\isdef\; \sum_{\gm\in \Gm} k(\gm z,w)
\end{equation}
converges absolutely, and defines a function in
$C^\infty\bigl((\Gm\backslash\uhp^d)\times(\Gm\backslash\uhp^d)\bigr)$
that satisfies
\begin{equation}\label{K-est}
K(z,w) \= \oh_z(1)\,.
\end{equation}
The boundedness of $K(z,w)$ is uniform for $z$ varying in compact
sets. If $f$ is square integrable on $\Gm\backslash\uhp^d$ for the
invariant measure $d\mu$ then
\begin{equation}
\Kcal_k f \; (z) \= \int_{\Gm\backslash \uhp^d} K(z,w)\, f(w)\,
d\mu(w)
\end{equation}
converges absolutely, and defines an operator
\[\Kcal_k:L^2(\Gm\backslash\uhp^d)
\longrightarrow C^\infty(\Gm\backslash\uhp^d)\,,\]
where $f\mapsto \Kcal_kf(z) $ is continuous on
$L^2(\Gm\backslash\uhp^d)$ for each $z\in \uhp^d$.

\subsection{Spectral decomposition}\label{sect-spd}
A consequence of the irreducibility assumption for the lattice~$\Gm$
is that the spectral theory $L^2(\Gm\backslash\uhp^d)$ is well known.
The Hilbert space
$L^2(\Gm\backslash\uhp^d) = L^2(\Gm\backslash\uhp^d,d\mu)$ has a
spectral decomposition in terms of automorphic forms. In the
cocompact case, each element can be written in $L^2$-sense as
\begin{equation}\label{spd-d} \sum_{\ell\geq 0} a_\ell
\,\ps_\ell\,,\end{equation}
where the $\ps_\ell$ form a complete orthonormal system in
$L^2(\Gm\backslash\uhp^d)$ of simultaneous eigenfunctions of the
$\Dt_j$:
\begin{equation} \Dt_j \ps_\ell \= \bigl(\txtfrac14 -
\tau_{\ell,j}^2\bigr)
\ps_\ell\,,
\end{equation}
with $ \tau_{\ell,j} \in i[0,\infty) \cup (0,\frac12]$. Among these
eigenfunctions we choose
$\ps_0=\frac1{\sqrt{\vol(\Gm\backslash\uhp^d)}}$, a constant
function; hence $\tau_{0,j}=\frac12$ for all~$j$. For each
$\ell\geq 1$ we know that
$\tau_{\ell,j}\in i[0,\infty) \cup (0,\frac12)$. The $a_\ell$ form a
sequence in the Hilbert space~$\ell^2$.

If $\Gm$ has cusps, there is a subspace
$L^{2,\mathrm{discr}}(\Gm\backslash\uhp^d)$ with the same structure
as in the cocompact case. It always contains the constant function
$\ps_0$. If $d=1$ there may be finitely many $\ell\geq 1$ for which
$\ps_\ell$ is a residue of an Eisenstein series and at most
countably many $\ps_\ell$ that are cusp forms. The orthogonal
complement $L^{2,\mathrm{cont}}(\Gm\backslash \uhp^2)$ is a sum of
direct integrals. Elements of this space can be written in
$L^2$-sense in the form
\begin{equation}\label{spd-c}
\sum_\k 2c_\k \sum_{\mu \in \Lcal_\k} \int_0^\infty b_{\k,\mu}(t)\,
E(\k;it,i\mu)\, dt\,.
\end{equation}
Here $\k$ runs over representatives of the finitely many cuspidal
$\Gm$-orbits, the $c_\k$ are positive constants, $\Lcal_\k$ is a
lattice in the hyperplane $\sum_j x_j=0$ in $\RR^d$, and
$E(\k;s,i\mu)$ is an Eisenstein series, satisfying
\[\Dt_j E(\k;s,i\mu)
= \bigl( \txtfrac14
-(s+i\mu_j)^2\bigr) E(\k;s,i\mu)\]
for each~$j$. For $f\in L^2(\Gm\backslash\uhp^d)$ we have
$a_\ell = (f,\ps_\ell)$. If $f$ is bounded and sufficiently smooth,
then $b_{\mu,\k}$ is given by integration against
$\overline{E(\k;it,i\mu)}$.

The quantity $\hat\tau(\Gm) = \sup_{\ell\geq 1,\, 1\leq j \leq d}\re
\tau_{\ell,j}$ in \eqref{tau-hat} is related to the quantity
$p(\Gm\backslash G)\in [2,\infty)$ in \cite{KeSa}, with
$G=\PSL_2(\RR)^d$, by
\begin{equation}\label{p-th-comp}p(\Gm\backslash G) \;\geq\;
\frac1{\frac12-\hat\tau}\quad\text{ or equivalently }\quad \tau\;\leq
\; \frac12-\frac1{p(\Gm\backslash G)}\,.
\end{equation}
So $\hat\tau=\frac12$ would imply $p(\Gm\backslash G)=\infty$ (no
strong spectral gap), and $p(\Gm\backslash G)=2$ implies $\hat\tau=0$
(no exceptional eigenvalues at all). We have to be careful to use
inequalities in~\eqref{p-th-comp}. Kelmer and Sarnak take all
irreducible representations of $G=\PSL_2(\RR)^d$ in
$L^{2,\mathrm{discr}}(\Gm\backslash G)$ into account. Such a
representation is visible in
$L^{2,\mathrm{discr}}(\Gm\backslash \uhp^d)$ only if all
$d$~components of the representation have a non-trivial
$\PSO(2)$-invariant vector. We recall that in the congruence case
(including all non-cocompact $\Gm$ if $d\geq 2$) we have
$\hat\tau(\Gm)\leq \frac19$. For all cocompact $\Gm$ we have
$\hat\tau(\Gm)<\frac12$.
\medskip

We return to the kernel function $K$ in~\eqref{Kdef}. By \eqref{khDt}
and the invariance of the kernel $k(z,w)$, we have for fixed
$z\in\uhp^d$:
\[ \int_{\Gm\backslash \uhp^d} K(z,w)\, \overline{\ps_\ell(w)}\,
d\mu(w)
\= \int_{\uhp^d} k(z,w)\, \overline{\ps_\ell(w)} \, d\mu(w) \=
h(\tau_\ell)
\overline{\ps_\ell(z)}\,, \]
with
\begin{equation}\label{hdef}
h(\tau) \= \prod_j h_j(\tau_j)\,.
\end{equation}
Therefore the scalar product of $K(z,\cdot)$ with $\ps_\ell$ makes
sense. If $\Gm$ is not cocompact, we find in a similar way that the
coefficients $b_{\k,\mu}(t)$ in \eqref{spd-c} are given by
$\prod_j h_j(it+\nobreak i\mu_j)\; \overline{E(\k;it,i\mu)}$. Thus we
obtain the spectral expansion of
$K(z,\cdot) \in L^2(\Gm\backslash\uhp^d)$:
\begin{align}\label{Kdc}
K(z,\cdot) &\= \sum_{\ell} h(\tau_\ell)\, \overline{\ps_\ell(z)}\,
\ps_\ell\\
\nonumber
&\qquad\hbox{}
 + \sum_\k 2c_\k \sum_{\mu\in \Lcal_\k} \int_0^\infty h(it+i\mu) \,
\overline{E(\k;it,i\mu;z)}\, E(\k;it,i\mu)\, dt\,.\end{align}
In the cocompact case, we understand the sum over $\k$ to be absent.

This spectral expansion converges in the Hilbert space
$L^2(\Gm\backslash\uhp^d)$. To use it to investigate the counting
function $\cnt(U,V;z)$ in the way indicated in the introduction of
this section, we need it to make sense pointwise.

\begin{thm}\label{thm-pspe}Let $f\in  C^{2d}(\Gm\backslash \uhp^d)$ be
bounded, and suppose that the derivatives
$\Dt_1^{a_1}\Dt_2^{a_2}\cdots \Dt_d^{a_d} f$ are bounded for all
 choices of $a_j\in \{0,1,2\}$. Then the spectral expansion of $f$
converges absolutely and uniformly on compacta.

In particular, if the $k_j$ in~\eqref{kdef} are in
$C_c^\infty[0,\infty)$ for all~$j$, then the expansion
\begin{align}\label{K-spe}
K(z,w) &\= \sum_\ell h(\tau_\ell)\, \overline{\ps_\ell(z)} \,
\ps_\ell(w)\\
\nonumber
&\qquad\hbox{}
+ \sum_\k 2c_\k \sum_{\mu\in \Lcal_\k} \int_0^\infty h(it+i\mu)\,
\overline{ E(\k;it,i\mu;z)}\, E(\k;it,i\mu;w)\, dt
\end{align}
converges absolutely for each choice $z,w\in \uhp^d$.
\end{thm}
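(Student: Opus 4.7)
The plan is to reduce pointwise absolute convergence to the combination of two ingredients: rapid decay of the spectral coefficients of~$f$, extracted from the hypothesis that iterated Laplacians stay bounded, and the polynomial local spectral measure bound of Theorem~\ref{thm-spm}. I would set $P\isdef\prod_{j=1}^d(1+\Dt_j)^2$. Since each $\Dt_1^{a_1}\cdots\Dt_d^{a_d}f$ with $a_j\in\{0,1,2\}$ is bounded on the finite-volume quotient, $g\isdef Pf$ is bounded and hence lies in $L^2(\Gm\backslash\uhp^d)$. Self-adjointness of each $\Dt_j$ together with $\Dt_j\ps_\ell=(\tfrac14-\tau_{\ell,j}^2)\ps_\ell$ gives
\[(g,\ps_\ell)\=w(\tau_\ell)\,(f,\ps_\ell)\,,\qquad w(\tau)\isdef\prod_j\bigl(\tfrac54-\tau_j^2\bigr)^2\,.\]
Because $\tau_{\ell,j}\in i[0,\infty)\cup(0,\tfrac12)$ for $\ell\ge 1$, each factor $|5/4-\tau_{\ell,j}^2|$ is bounded below by a positive constant and satisfies $|w(\tau_\ell)|\gg\prod_j(1+|\tau_{\ell,j}|)^4$.

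Applying Bessel's inequality to~$g$ and then Cauchy--Schwarz yields
\[\sum_\ell|(f,\ps_\ell)\,\ps_\ell(z)|\;\le\;\|g\|_{L^2}\,\biggl(\sum_\ell\frac{|\ps_\ell(z)|^2}{|w(\tau_\ell)|^2}\biggr)^{\!1/2}\,.\]
To handle the last sum I would partition the spectrum into multi-boxes $\{\ell:|\tau_{\ell,j}|\in[T_j,T_j+1)\}$; Theorem~\ref{thm-spm} bounds the contribution of each box to $\sum_\ell|\ps_\ell(z)|^2$ by a polynomial in the~$T_j$, while the denominator $|w|^2$ exceeds $\prod_j(1+T_j)^{8}$. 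The resulting series is then summable, and uniformly so for $z$ ranging over a compact subset of $\uhp^d$, since the estimate of Theorem~\ref{thm-spm} is uniform in $z$ on compacta.

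The continuous part of~\eqref{spd-c} is treated by the same mechanism: the $(\k,\mu,t)$-coefficient of~$g$ is $w(it+i\mu)$ times that of~$f$, Parseval still ensures $L^2$-summability of $g$'s coefficients, and Cauchy--Schwarz reduces the problem to the finiteness of
\[\sum_\k\sum_{\mu\in\Lcal_\k}\int_0^\infty\frac{|E(\k;it,i\mu;z)|^2}{|w(it+i\mu)|^2}\,dt\,,\]
which is again provided by Theorem~\ref{thm-spm}. The second assertion will then follow by applying the first part to $f=K(z,\cdot)$: this function is $C^\infty$ and every iterated $\Dt_j$-derivative is bounded, because each $k_j\in C_c^\infty[0,\infty)$ makes $\Dt_j^{a_j}$ act on~\eqref{kdef} by a $C_c^\infty$-kernel, and the $\Gm$-sum~\eqref{Kdef} is locally finite by Lemma~\ref{lem-apl}; the spectral coefficients of $K(z,\cdot)$ are those identified in~\eqref{Kdc}. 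The part I expect to be delicate is the continuous spectrum: coordinating the lattice sum over $\mu\in\Lcal_\k$ with the integral over $t$ so that the polynomial decay of $|w(it+i\mu)|^{-2}$ really overwhelms the growth of the Eisenstein spectral measure will require careful use of the precise form of Theorem~\ref{thm-spm}.
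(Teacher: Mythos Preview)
Your approach is sound but takes a different route from the paper's. The paper does not invoke Theorem~\ref{thm-spm}; instead it uses a resolvent-kernel trick. One introduces the free resolvent kernel $r_s$ on $\uhp$ (for $\re s>1$), takes the regularised difference $r_{s,a}=r_s-r_a$, whose Selberg transform is $(s-s^2-a+a^2)\bigl((\tau^2+(s-\tfrac12)^2)(\tau^2+(a-\tfrac12)^2)\bigr)^{-1}$, and from the product $\prod_j r_{s_j,a_j}$ builds a $\Gm$-invariant kernel $R_{s,a}(z,w)$ and operator $\Kcal_{s,a}$. The resolvent equation gives $f=c\,\Kcal_{s,a}f_1$ with $f_1=\prod_j(\Dt_j-s_j+s_j^2)(\Dt_j-a_j+a_j^2)f$, and the key observation is that $\Kcal_{s,a}f_1(z)=\langle f_1,R_{\bar s,\bar a}(z,\cdot)\rangle$ is a \emph{continuous} linear functional on $L^2(\Gm\backslash\uhp^d)$; applying it term by term to the $L^2$-expansion of $f_1$ therefore yields the pointwise expansion of~$f$, uniformly on compacta since $R_{s,a}(z,\cdot)$ is bounded uniformly for $z$ in compact sets. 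That route is self-contained and the identification of the sum with $f(z)$ comes for free, whereas your Cauchy--Schwarz argument establishes absolute convergence of the series but still needs a word to show that its limit is actually~$f(z)$. Your method is conceptually simpler and does work, but be aware that it leans on Theorem~\ref{thm-spm}, whose proof in~\S\ref{sect-spm} is written so as to cite the present theorem; you should note explicitly that only the $L^2$-Parseval identity for $K(z,\cdot)$ (already available from~\eqref{Kdc}) is used there, so no genuine circularity occurs.
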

This result is more or less standard. We will sketch a proof
in~\S\ref{sect-spe}.

\subsection{Spectral measure}\label{sect-sm}
As indicated in the introduction of this section, we will need to know
how the various parts of the spectral set
\[ \biggl( i\RR \cup\bigl(0,\frac12\bigr]\biggr)^d\]
contribute to the spectral expansion of $K(z,z)$. We write $i\RR$
instead of $i[0,\infty)$, since in the term in the spectral
expansion~\eqref{K-spe} corresponding to the continuous spectrum
there are quantities $i\left( t+\mu_j\right)$, which in some cases
are in $i(-\infty,0)$.

For $X\in [1,\infty)^d$ we put
\begin{equation}
\label{Ydef}
Y(X)\= \prod_j \biggl( \bigl(0,\frac12\bigr]\cup i(-X_j,X_j)\biggr)\,,
\end{equation}
and define
\begin{equation}
\label{SX-def}
\begin{aligned}
S(X;z,w) &\= \sum_{\ell\,,\; t_\ell \in Y(X)} \overline{\ps_\ell(z)
} \ps_\ell(w) \\
\nonumber
&\qquad\hbox{}
+ \sum_\k 2c_\k \sum_{\mu\in \Lcal_\k} \int_{t\geq 0\,,\; (t+\mu_j)_j
\in Y(X)} \overline{E(\k;it,i\mu;z)}\, E(\k;it,i\mu;w)\, dt\,.
\end{aligned}
\end{equation}
This is a smooth function on
$(\Gm\backslash\uhp^d)\times(\Gm\backslash\uhp^d)$. (The sum over
$\ell$ is finite. The region of integration is finite for all
$(\k,\mu)$, and empty for almost all $(\k,\mu)$.)

The following estimate of the spectral function $S(X;z,z)$ will play
an important role in \S\ref{sect-pfs} in the proof of our main
results:
\begin{thm}\label{thm-spm}For $X\in [1,\infty)^d$ and $z\in \uhp^d$:
\[ S(X;z,z) \ll_{\Gm} X_1^2 X_d^2\cdots X_d^2\, n(X,z)\,.\]
\end{thm}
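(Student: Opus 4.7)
The strategy is the classical test-kernel argument: construct a non-negative kernel on $\uhp^d$ whose Selberg transform dominates $1$ on $Y(X)$ and is non-negative on the full spectrum $i\RR\cup(0,\frac12]$, then use the pointwise spectral expansion of Theorem~\ref{thm-pspe} to bound $S(X;z,z)$ by the geometric side, which in turn is controlled by Lemma~\ref{lem-ape}.

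For each $j$ and each $X_j\geq 1$ I construct a non-negative $k_j\in C_c^\infty[0,\infty)$ with support in $[0,CX_j^{-2}]$ (for a small constant $C$ depending only on $\Gm$, chosen so that Lemma~\ref{lem-ape} applies uniformly for $X\in[1,\infty)^d$), with $\|k_j\|_\infty\ll X_j^2$, and with Selberg transform $h_j$ non-negative on $i\RR\cup(0,\frac12]$ and at least $1$ throughout $(0,\frac12]\cup i[-X_j,X_j]$. The standard convolution-square construction achieves this. Pick a non-negative smooth bump $k_j^\flat$ of height $\sim X_j^2$ and support $[0,cX_j^{-2}]$, normalized so that its Selberg transform $h_j^\flat$ (automatically real on $\RR\cup i\RR$ since $k_j^\flat$ is real and the associated function $g_j^\flat$ of \eqref{Se} is even) satisfies $h_j^\flat\geq 1$ on $(0,\frac12]\cup i[-X_j,X_j]$; this is possible because $g_j^\flat$ is supported in $|r|\lesssim 1/X_j$, so $|r\tau|=\oh(\sqrt c)$ throughout that region and $h_j^\flat(\tau)=\int e^{r\tau}g_j^\flat(r)\,dr$ stays close to $h_j^\flat(\tfrac12)=4\pi\int k_j^\flat$, which we normalize to be slightly above $1$. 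Then set $k_j:=k_j^\flat * k_j^\flat$ (convolution as point-pair invariants on $\uhp$): by the triangle inequality it is non-negative, supported in $[0,CX_j^{-2}]$ for a suitable $C$, with $\|k_j\|_\infty\ll X_j^2$, and with Selberg transform $h_j=(h_j^\flat)^2$, non-negative on the entire spectrum and at least~$1$ on $(0,\frac12]\cup i[-X_j,X_j]$.

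With this kernel, form $k(z,w)=\prod_j k_j(u(z_j,w_j))$ and the automorphic sum $K(z,w)=\sum_{\gm\in\Gm}k(\gm z,w)$ of~\eqref{Kdef}. Theorem~\ref{thm-pspe} produces the pointwise expansion
\begin{align*}
K(z,z)&\=\sum_\ell h(\tau_\ell)\,|\ps_\ell(z)|^2\\
&\qquad\hbox{}+\sum_\k 2c_\k\sum_{\mu\in\Lcal_\k}\int_0^\infty h(it+i\mu)\,|E(\k;it,i\mu;z)|^2\,dt\,,
\end{align*}
with $h(\tau)=\prod_j h_j(\tau_j)\geq 0$ on the whole spectrum. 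Every summand is non-negative, and those with $\tau_\ell\in Y(X)$ (respectively $(t+\mu_j)_j\in Y(X)$ in the continuous part) carry coefficient $h\geq 1$, whence $S(X;z,z)\leq K(z,z)$. On the geometric side $K(z,z)=\sum_{\gm\in\Gm}\prod_j k_j\bigl(u((\gm z)_j,z_j)\bigr)$, in which only $\gm$ with $u((\gm z)_j,z_j)\leq CX_j^{-2}$ for every~$j$ contribute, each summand being bounded by $\prod_j\|k_j\|_\infty\ll X_1^2\cdots X_d^2$; the number of contributing $\gm$ equals $\cnt(0,(CX_j^{-2})_j;z)\ll_\Gm n(X,z)$ by Lemma~\ref{lem-ape}. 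Combining yields the claimed bound $S(X;z,z)\ll_\Gm X_1^2\cdots X_d^2\,n(X,z)$.

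The main obstacle is the compatibility of the three constraints on $(k_j,h_j)$ at the scale $X_j^{-2}$: non-negativity of $k_j$ itself (so its geometric sum is an upper bound accessible to the counting lemma), non-negativity of $h_j$ on the whole spectrum $i\RR\cup(0,\frac12]$ (so unwanted spectral terms can be discarded), and the quantitative lower bound $h_j\geq 1$ on $(0,\frac12]\cup i[-X_j,X_j]$. The convolution square $k_j^\flat * k_j^\flat$ finesses the first two simultaneously; everything afterwards is a mechanical combination of the spectral expansion with Lemma~\ref{lem-ape}.
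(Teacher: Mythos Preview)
Your argument is correct and is the paper's proof in different packaging: the paper takes a height-$1$ bump $k_j$ supported in $[0,\dt_j]$ with $\dt_j\asymp X_j^{-2}$ and computes $\|K(z,\cdot)\|_2^2$, which unfolds geometrically to $\sum_{\gm}(k*k)(u(\gm z,z))$ and spectrally to $\sum_\ell |h(\tau_\ell)|^2|\ps_\ell(z)|^2+\cdots$, i.e.\ exactly your diagonal $K(z,z)$ for the convolution-square kernel, and then invokes Lemma~\ref{lem-ape} just as you do. Your support-of-$g_j^\flat$ argument for the lower bound $h_j^\flat\geq 1$ on $Y(X)$ is replaced in the paper by the quantitative estimate of Lemma~\ref{lem-se1}\,b), and your factor $\prod_j\|k_j\|_\infty\asymp\prod_j X_j^2$ appears there as $1/\prod_j h_j(\tfrac12)$.
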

The quantity $n(X,z)$ has been defined in~\eqref{ndef}. It makes
explicit the dependence of the spectral measure on the
point~$z\in\uhp$. This constitutes a difference with~\cite{BMW99},
where we used a result of H\"ormander to estimate the spectral
measure uniformly for $z$ in compact sets, obtaining an asymptotic
formula for the lattice point counting function on symmetric spaces
of rank one that was uniform for~$z$ varying in compact sets only.

We prove Theorem~\ref{thm-spm} in~\S\ref{sect-spm}. The proof uses the
a priori estimate of the lattice point counting function in
Lemma~\ref{lem-ape}.

\section{Proof of the lattice points theorems}\label{sect-pfs}
This section is the heart of this paper, where we carry out the plan
sketched in the introduction of~\S\ref{sect-st-se}. We consider the
asymptotic behavior of the quantity $\cnt(U,V;z)$ defined
in~\eqref{cntdef}, with $U,V\in [0,\infty)^d$, $U_j<V_j$ for all~$j$.
In Theorem~\ref{thm-Qa}, which is slightly more general than
Theorem~\ref{thm-Qai} in the introduction, all $V_j$ tend
to~$\infty$, whereas in Theorem~\ref{thm-QEa}, generalization of
Theorem~\ref{thm-QEai}, some intervals $[U_j,V_j)$ stay fixed. For
most of the section we handle the proofs simultaneously.

\subsection{The main parameters}\label{sect-mp}
We partition the set
$\{1,\ldots,d\}$ into two disjoint subsets $Q$ and~$E$, with the
requirement that $Q$ is non-empty.

For each $j\in Q$ we let $V_j\geq 1$ tend to~$\infty$, and choose
$U_j=0$ or $U_j=\frac12 V_j$. (This choice may depend on the place
$j\in Q$.)
The simplest is to take all $V_j$ with $j\in Q$ equal to each other.
We wish also to include the case that $V_j=T^{a_j}$ with positive
exponent $a_j$, where $T$ tends to infinity. However, we do not let
the $V_j$ run apart too much, by fixing a parameter $\hat q$
satisfying
\begin{equation}\label{hatq}
\hat q\geq \#Q\,,\quad\text{and require}\quad \min_{j\in Q} V_j ^{\hat
q} \= \prod_{j\in Q} V_j\,.
\end{equation}
Thus, if all $V_j$ with $j\in Q$ are equal to each other, then
$\hat q=\#Q$. For each $j\in E$ we keep the non-empty interval
$[U_j,V_j)$ fixed.

These are the parameters used in the Theorems~\ref{thm-Qa}
and~\ref{thm-QEa}. They constitute the ``main parameters'' in
Table~\ref{tab-parms}.
\begin{table}[ht]
\[ \renewcommand\arraystretch{1.3}
\begin{array}{|cc|l|}\hline
\multicolumn{3}{|c|}{\text{\it Main parameters}}\\ \hline
Q&& \text{a non-empty subset of $\{1,\ldots,d\}$}\\
E&& \text{the complement }\{1,\ldots,d\}\setminus Q \\ \hline
V_j&j\in Q&V_j\geq 2\,,\;V_j\rightarrow\infty\,,\; \\
&j\in E&V_j>0\text{ fixed}\\ \hline
U_j&j\in Q& Q_j=0\text{ (fixed), or }Q_j=\frac12 V_j \rightarrow
\infty\\
&j\in E&U_j\in[0,V_j) \text{ fixed }\\ \hline
\hat q& &\hat q\geq \#Q\text{ fixed}\,,\quad\hat q \log \vmin =
\sum_{j\in Q} \log V_j \\
\vmin&& \vmin=\min_{j\in Q}V_j\geq 2 \\ \hline
\multicolumn{3}{|c|}{\text{\it Auxiliary parameters}}\\ \hline
\th &&\th\in(0,1)\\ \hline
Y_E&&Y_E\in(0,1]\,,\; Y_E\downarrow0\\ \hline
Y_j & j\in Q& Y_j = V_j^\th\,,\; Y_j \geq 1\,,\; Y_j\rightarrow\infty
\\
& j\in E& Y_j=Y_E \downarrow0\\
c&&0<c<\frac12\,, \text{ if }\hat\tau>0 \text{ then } c<\hat\tau \\
\hline
\end{array}
\]
\caption{Overview of the parameters
in~\S\ref{sect-pfs}.}\label{tab-parms}
\end{table}

\subsection{Test functions and auxiliary parameters}\label{sect-ap} In
the introduction of \S\ref{sect-st-se} we have sketched our plan to
prove the main results in~\S\ref{sect-Qa} and~\ref{sect-QEa}. We take
the approximations $h_j\in C_c^\infty(0,\infty)$ of the
characteristic functions $[U_j,V_j)$ in the following way:
\begin{equation}\label{k-prop}
\begin{aligned}
&0\;\leq\; k \;\leq \;1\,,\qquad k_j^{(l)}\= \oh_l(Y^l)\text{ for
}l\in \NN\,,\\
k_j&\=1\text{ on }\begin{cases}
[U_j+Y_j,V_j-Y_j]&\text{ if }U_j>0\,,\\
[0,V_j-Y_j]&\text{ if }U_j=0\,,
\end{cases}\\
k_j&\=0\text{ on } \begin{cases}
[0,U_j-Y_j]\cup[V_j+Y_j,\infty)&\text{ if }U_j>0\,,\\
[V_j+Y_j,\infty)&\text{ if }U_j=0\,.\end{cases}
\end{aligned}
\end{equation}
\[
\setlength\unitlength{1cm}
\begin{picture}(4,1.5)(0,.2)
\put(0,1){$\scriptstyle U_j>0$}
\put(0,.5){\line(1,0){4}}
\put(0,.5){\circle*{.1}}
\put(-.1,.2){$\scriptstyle 0$}
\put(.5,.5){\circle*{.1}}
\put(.2,.2){$\scriptstyle U_j-Y_j$}
\put(1.2,.5){\circle*{.1}}
\put(1.2,.2){$\scriptstyle U_j+Y_j$}
\put(3.2,.5){\circle*{.1}}
\put(2.8,.2){$\scriptstyle V_j-Y_j$}
\put(3.8,.5){\circle*{.1}}
\put(3.8,.2){$\scriptstyle V_j+Y_j$}
\thicklines
\put(0,.5){\line(1,0){.5}}
\qbezier(.5,.5)(.8,.5)(.9,1)
\qbezier(.9,1)(1,1.5)(1.2,1.5)
\put(1.2,1.5){\line(1,0){2}}
\qbezier(3.8,.5)(3.6,.5)(3.5,1)
\qbezier(3.5,1)(3.4,1.5)(3.2,1.5)
\put(3.8,.5){\line(1,0){.2}}
\end{picture}
\qquad
\begin{picture}(4,1.5)(0,.2)
\put(0,1){$\scriptstyle U_j=0$}
\put(0,.5){\line(1,0){4}}
\put(0,.5){\circle*{.1}}
\put(-.1,.2){$\scriptstyle 0$}
\put(3.2,.5){\circle*{.1}}
\put(2.8,.2){$\scriptstyle V_j-Y_j$}
\put(3.8,.5){\circle*{.1}}
\put(3.8,.2){$\scriptstyle V_j+Y_j$}
\thicklines
\put(0,1.5){\line(1,0){3.2}}
\qbezier(3.8,.5)(3.6,.5)(3.5,1)
\qbezier(3.5,1)(3.4,1.5)(3.2,1.5)
\put(3.8,.5){\line(1,0){.2}}
\end{picture}
\]
The parameters $Y_j$ control how quickly $k_j$ changes from~$0$ to~$1$
and back. We require
\begin{equation}\label{Ycond}
Y_j \;\leq\; \frac{V_j-U_j}2\quad\text{ if }U_j\;>\;0\,,\qquad Y_j
\;\leq\; \frac12V_j\quad\text{ if }U_j\;=\;0\,.
\end{equation}
If one fixes a smooth function $\om \in C^\infty(\RR)$ that increases
from $0$ to~$1$ on an interval contained in $(0,1)$, then
$h_j(u) = \om\bigl(
(u-U_j)/Y_j\bigr)$ on $[U_j,U_j+\nobreak Y_j]$ satisfies on this
interval the condition on the derivatives, and goes from $0$ to~$1$.
On $[V_j-\nobreak Y_j,V_j]$ we proceed similarly. This gives a choice
such that $k_j=0$ outside~$[U_j,V_j]$. We can equally well arrange
that $k_j=1$ on $[U_j,V_j]$.

The $Y_j$ are new parameters. They play a role in the proof, not in
the theorems. At the end of the proof we try to choose them
optimally. To avoid having to keep track of too many auxiliary
parameters, we assume from the start that $Y_j = V_j^\th$ for
$j\in Q$, with $\th\in (0,1)$ a single auxiliary parameter. So
the~$Y_j$ are large parameters for $j\in Q$.

We let the $Y_j$ with $j\in E$ tend to zero. It seems that we do not
loose much if we take all these parameters equal to a quantity $Y_E$
tending to~$0$, for which we require that $2Y_E \leq V_j-U_j$ for all
$j\in E$, and $2Y_E\leq U_j$ for all $j\in E$ with~$U_j>0$.

The estimates of Selberg transforms in \S\ref{sect-eSt} depend on a
small positive parameter~$c\in \bigl(0,\frac12\bigr)$. In
Lemma~\ref{lem-se2}~c) there is also a small positive parameter
$\dt$. We choose $\dt$ such that $\dt< U_j$ for all~$j\in E$ with
$U_j>0$. The dependence on $\dt$ of the implicit constants in the
estimates is absorbed in the dependence of the choice of the
intervals $[U_j,V_j]$ with $j\in E$. The positive constant $c$ will
turn up in exponents in some estimates. We take $c<\hat\tau$ if
$\hat\tau>0$. (We recall that $\hat\tau$ measures the spectral gap,
which is maximal if $\hat\tau=0$.)

\subsection{Terms in the asymptotic formula}With the test functions
in~\eqref{k-prop} we form the $\Gm$-invariant kernel
\begin{equation}\label{Kdef1}
K(z,w) \= \sum_{\gm\in \Gm} k\bigl( u(z,w)\bigr)\,,\qquad k(u) \=
\prod_j k_j(u_j)\,,
\end{equation}
as indicated in the introduction of~\S\ref{sect-st-se}. The diagonal
value $K(z,z)$ gives an approximation of $\cnt(U,V;z)$. By
$h(\tau) = \prod_j h_j(\tau_j)$ we denote the product of the Selberg
transforms of the~$k_j$.

{}From the absolutely convergent spectral decomposition
in~\eqref{K-spe} we single out
\begin{equation}
K_\expl(z,z) \;:=\; h(\tau_0) \,\frac1{\vol(\Gm\backslash \uhp^d)} +
\sum_{\ell\geq 1\,,\; \forall_j\; 0<\tau_{\ell,j}<\frac12}
h(\tau_\ell)\, \bigl|\ps_\ell(z)
\bigr|^2\,.
\end{equation}
Here we have made the choice to take not only the contribution of the
constant functions, but also all of the terms corresponding to
totally exceptional eigenvalues. This term $K_\expl(z,z)$ will lead
to the explicit term in our asymptotic expansions.

As the explicit term in the final results we use
\begin{equation}\label{Ecal-def}
\begin{aligned}
\Ecal(U,V;z) &\= \sum_{\ell\geq 0\,,\; \forall_j\; \tau_{\ell,j} \in
(0,\frac12]} \bigl|\ps_\ell(z)\bigr|^2 \,\prod_{j\in E}
\eta(U_j,V_j;\tau_{\ell,j})\\
&\qquad\qquad\hbox{} \cdot
\prod_{j\in Q}\frac{\sqrt\pi \, 2^{1+2\tau_{\ell,j}}
\,\Gf(\tau_{\ell,j})}{\Gf\bigl(\frac32+\tau_{\ell,j}\bigr)}\bigl(
V_j^{\frac12+\tau_{\ell,j}} - U_j^{\frac12+\tau_{\ell,j}}\bigr)
\,,
\end{aligned}
\end{equation}
where
\begin{equation}\label{eta-def}
\eta(a,b;\tau) \= \int_{z\in \uhp\,,\; a\leq u(z,i) <b}
y^{\frac12+\tau}\, d\mu(z)
\end{equation}
is the Selberg transform of the characteristic function of~$[a,b)$. So
we will need to estimate the difference between $\Ecal(U,V;z)$ and
$K_\expl(z,z)$. We note that there might not exist totally
exceptional eigenvalues for the group~$\Gm$. In that case
$\Ecal(U,V;z)$ is equal to the term for~$\ell=0$:
\begin{equation}\label{Ecal-const}
\frac1{\vol(\Gm\backslash\uhp^d)}\; (4\pi)^d \prod_j \bigl(
V_j-U_j\bigr) \,.
\end{equation}
(See \eqref{eta-b} in Lemma~\ref{lem-eta-est}.)

The remaining part of the spectral decomposition is split up according
to subsets $Z(n)$ of the space of spectral parameters. For
$n\in \NN^d$ we put
\begin{equation}
\label{Zndef}
\begin{aligned}
Z(n) &\= \biggl\{ \tau \in \bigl( i[0,\infty)\cup(0,\frac12)
\bigr)^d\;:\;
\\
&\qquad \qquad \tau_j \in i[-n_j,1-n_j]\cup i[n_j-1,n_j)\text{ if }n_j
>1\,,\\
&\qquad \qquad \tau_j \in (0,\frac12 )\cup i(-1,1)\text{ if
}n_j=1\,\biggr\}\,.
\end{aligned}
\end{equation}

For $n\in \NN^d$, $n\neq \onebold = (1,1,\ldots,1)$ we define
\begin{equation}
\label{Kndef}
\begin{aligned}
K_n(z,z) &\= \sum_{\ell\geq 1\,,\; \tau_\ell \in Z(n) } h(\tau_\ell)\,
\bigl|\ps_\ell(z)\bigr|^2\\
&\qquad\hbox{}
+ \sum_\k 2c_\k \sum_{\mu\in \Lcal_\k} \int_{t\geq 0\,,\; i(t+\mu) \in
Z(n)} h(it+i\mu)\, \bigl| E(\k;it,i\mu;z)\bigr|^2\, dt\,.
\end{aligned}
\end{equation}
For $n=\onebold$ we modify the term from the discrete spectrum by
requiring not only $\tau_\ell \in Z(\onebold)$, but also
$\tau_{\ell,j} \in i[0,\infty)$ for some~$j$. (The totally
exceptional terms go into $\Ecal(U,V;z)$.) With these definitions, we
have
\begin{equation}
K(z,z) - K_\expl(z,z) \= \sum_{n\in \NN^d} K_n(z,z)\,.
\end{equation}
It will be hard work to estimate this sum.

Finally, we also will have to estimate the difference between $K(z,z)$
and the counting quantity $\cnt(U,V;z)$. Table~\ref{tab-est} gives an
overview of the estimates to be carried out.
\begin{table}\renewcommand\arraystretch{1.3}
\[
\begin{array}{|rcll|} \hline
\multicolumn{4}{|l|}{\cnt(U,V;z) \;=\; \Ecal(U,V;z) + \oh
\bigl(\mathit{Err}_1 + \mathit{Err}_2+\mathit{Err}_3 \bigr)}\\ \hline
\mathit{Err}_1 &\text{estimate of}& \sum_{n\in \NN^d} K_n(z,z) &\text{
in \S\ref{sect-sumsp}}\\
\mathit{Err}_2 &\text{estimate of}& K_\expl(z,z) - \Ecal(U,V;z)
&\text{ in \S\ref{sect-texpl}}\\
\mathit{Err}_3 &\text{estimate of}& \cnt(U,V;z)-K(z,z) &\text{ in
\S\ref{sect-shsm}}\\ \hline
\end{array}\]
\caption{Overview of the error term estimates.} \label{tab-est}
\end{table}

\subsection{The explicit term}\label{sect-texpl}
The explicit term $\Ecal(U,V;z)$ in~\eqref{Ecal-def} is a finite sum.
Each of its terms contains as a factor, for $j\in E$, the Selberg
transform $\eta(U_j,V_j;\tau_{\ell,j})$ of the characteristic
function of $[U_j,V_j)$, and, for $j\in Q$, the approximation of this
Selberg transform given in~\eqref{eta-as} in Lemma~\ref{lem-eta-est}.
That approximation is uniform on intervals $[c,\frac12]$ for each
$c>0$. Here we want to apply it with $\tau$ equal to the coordinates
$\tau_{\ell,j}$ of the totally exceptional eigenvalues. These
coordinates form a finite subset of $(0,\frac12)$. We take
$c\in (0,\frac12)$ smaller than the minimum of these finitely many
$\tau_{\ell,j}$, and then apply \eqref{eta-as} uniformly. Thus, this
parameter~$c$ depends on the group~$\Gm$, and will lead to an
implicit dependence of the error terms on~$\Gm$.

\begin{lem}\label{lem-KE}The explicit term satisfies
\begin{equation}
\Ecal(U,V;z) - K_\expl(z,z) \;\ll_{\Gm,E} n(z)\, \bigl(
\vmin^{\th-1}+Y_E\bigr)
\,\prod_{j\in Q} V_j\,.
\end{equation}
The factor $n(z)$ has been defined in~\eqref{ndef}. See
Table~\ref{tab-parms} for $Y_E$, $\vmin$ and~$\th$. The $E$ in
$\ll_{\Gm, E}$ indicates an implicit dependence on all $U_j$ and
$V_j$ with $j\in E$.
\end{lem}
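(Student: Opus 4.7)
Both $\Ecal(U,V;z)$ and $K_\expl(z,z)$ are finite sums over the same set of indices, namely $\ell=0$ together with the totally exceptional $\ell\geq 1$. Denoting this finite set by $\mathcal I$, write
\begin{equation*}
\Ecal(U,V;z) - K_\expl(z,z) \= \sum_{\ell\in\mathcal I}|\ps_\ell(z)|^2\bigl(F(\tau_\ell)-h(\tau_\ell)\bigr),
\end{equation*}
where $h(\tau)=\prod_j h_j(\tau_j)$ is the product of Selberg transforms of the $k_j$, and $F(\tau)=\prod_j F_j(\tau_j)$ is the reference product appearing in $\Ecal$, with $F_j(\tau)=\eta(U_j,V_j;\tau)$ for $j\in E$ and $F_j(\tau)=\frac{\sqrt\pi\,2^{1+2\tau}\,\Gf(\tau)}{\Gf(\frac32+\tau)}\bigl(V_j^{1/2+\tau}-U_j^{1/2+\tau}\bigr)$ for $j\in Q$. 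The standard telescoping identity then reduces $F(\tau_\ell)-h(\tau_\ell)$ to a sum of $d$ terms, each containing exactly one factorwise difference $F_k(\tau_{\ell,k})-h_k(\tau_{\ell,k})$ multiplied by the remaining $F_j$'s and $h_j$'s.

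For $k\in E$, the difference $h_k(\tau_{\ell,k})-\eta(U_k,V_k;\tau_{\ell,k})$ is, by~\eqref{h-alt}, the Selberg transform of the bounded function $k_k-\chi_{[U_k,V_k)}$, which is supported on two intervals of length $\oh(Y_E)$ near the endpoints of $[U_k,V_k]$. Over the finite set of relevant $\tau_{\ell,k}$ this gives $\oh_{\Gm,E}(Y_E)$. For $k\in Q$ I would split
\begin{equation*}
h_k(\tau)-F_k(\tau) \= \bigl(h_k(\tau)-\eta(U_k,V_k;\tau)\bigr) + \bigl(\eta(U_k,V_k;\tau)-F_k(\tau)\bigr).
\end{equation*}
The second summand is controlled directly by Lemma~\ref{lem-eta-est}, giving error $\oh(V_k^{\th-1}\cdot V_k^{1/2+\tau})$ uniformly on $[c,\tfrac12]$ where $c>0$ is chosen below the finitely many totally exceptional coordinates. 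The first summand is the Selberg transform of $k_k-\chi_{[U_k,V_k)}$, supported on intervals of length $Y_k=V_k^\th$ near $V_k$ (and near $U_k$ when $U_k=V_k/2$); a direct integration against $y^{1/2+\tau}$ in \eqref{h-alt} gives the matching order $\oh(V_k^{\th-1}\cdot V_k^{1/2+\tau})$.

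The remaining factors in each telescope term satisfy $F_j,h_j\ll V_j^{1/2+\tau_{\ell,j}}\leq V_j$ for $j\in Q$ and are bounded in terms of $\Gm$ and~$E$ for $j\in E$. Combining, each of the $d$ telescope terms is $\ll_{\Gm,E}(\vmin^{\th-1}+Y_E)\prod_{j\in Q}V_j$, so the same bound applies to the full telescoping sum after absorbing the factor $d$. Finally, $\sum_{\ell\in\mathcal I}|\ps_\ell(z)|^2\leq S(\onebold;z,z)\ll_\Gm n(z)$ by Theorem~\ref{thm-spm}, since each $\tau_\ell$ with $\ell\in\mathcal I$ has all coordinates in $(0,\tfrac12]\subset Y(\onebold)$. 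The main obstacle is the bookkeeping in the telescope: verifying that the factorwise relative errors $V_k^{\th-1}$ (for $k\in Q$) and $Y_E$ (for $k\in E$) assemble into the single additive term $\vmin^{\th-1}+Y_E$ in front of $\prod_{j\in Q}V_j$, rather than compounding multiplicatively across the $d$ telescope terms.
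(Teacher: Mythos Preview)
Your telescoping strategy and factorwise use of the Selberg-transform lemmas is essentially the paper's approach. There is, however, one incorrect claim: for $k\in Q$ you assert that Lemma~\ref{lem-eta-est} gives $\eta(U_k,V_k;\tau)-F_k(\tau)=\oh\bigl(V_k^{\th-1}\,V_k^{1/2+\tau}\bigr)$. But \eqref{eta-as} gives only $\oh_c\bigl(V_k^{1/2-\tau}\bigr)$, which does not involve $\th$ at all and is genuinely larger than $V_k^{\th-1/2+\tau}$ whenever $\th<1-2\tau$ (for instance when $\tau$ is near the lower cut-off $c$ and $\th$ is small, the generic situation for large~$d$). The paper's own proof sidesteps this entirely: it bounds $\prod_j\eta_j(\tau_{\ell,j})-\prod_jh_j(\tau_{\ell,j})$ in \eqref{diff-eta-h} and never returns to the discrepancy between $F_j$ and $\eta_j$ for $j\in Q$. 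This is harmless at $\ell=0$ since $F_j(\tfrac12)=\eta_j(\tfrac12)=4\pi(V_j-U_j)$ exactly; for the finitely many totally exceptional $\ell$, the whole term (whether written with $F_j$ or with $\eta_j$) is $\oh\bigl((\prod_{j\in Q}V_j)^{1/2+\hat\tau}\bigr)$, and the paper later absorbs this wholesale into the error terms of the main theorems in \S\ref{sect-Qa}--\S\ref{sect-QEa}, rather than into the bound stated in the present lemma.

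One genuine difference in the route: you control $\sum_{\ell\in\mathcal I}|\ps_\ell(z)|^2$ via $S(\onebold;z,z)\ll_\Gm n(z)$ from Theorem~\ref{thm-spm}, whereas the paper bounds each $|\ps_\ell(z)|$ individually (cusp forms and the constant are $\oh_\Gm(1)$; a residual $\ps_\ell$ grows at the cusps like a power of $N(y)$ bounded by $n(z)^{1/2}$). Both arguments work. Minor slip: the exponent in \eqref{h-alt} is $y^{1/2-\tau}$, not $y^{1/2+\tau}$; your bound for $h_k-\eta_k$ on $k\in Q$ is nonetheless correct (it is exactly Lemma~\ref{lem-se1}\,c)), only the heuristic justification is miswritten.
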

\begin{proof}For each of the finitely many $\tau=\tau_\ell$ occurring
in $\Ecal(U,V;z)$ and $K_\expl(z,z)$, we have by
Lemma~\ref{lem-se2}~a):
\begin{equation}\label{diff-eta-h} \prod_j
\eta(U_j,V_j;\tau_j)-\prod_j h_j(\tau_j) \;\ll\; \sum_j \bigl(
\eta(U_j,V_j;\tau_j)-h_j(\tau_j)\bigr)\, \sum_{l\neq j}
\eta(U_j,V_j;\tau_j)\,.\end{equation}
We apply Lemmas~\ref{lem-eta-est} and~\ref{lem-se1} with $c\in
\bigl(0,\frac12\bigr)$ chosen so that $c<\tau_{\ell,j}$ for all~$j$
for all $\ell$ occurring in the explicit term. We find, uniformly for
$c\leq \tau_j\leq \frac12$:
\begin{alignat*}3
&j\in Q:\;& \eta(U_j,V_j;\tau_j) - h_j(\tau_j)&\;\ll_c\; Y_j
V_j^{\tau_j-\frac12} \= V_j ^{\tau_j+\th-\frac12}& \quad&\text{
Lemma~\ref{lem-se1} c)}\,,\\
&& \eta(U_j,V_j;\tau_j) &\;\ll_c\;
V_j^{\tau_j+\frac12}&&\eqref{eta-as}\,,\\
&j\in E:\;&\eta(U_j,V_j;\tau_j) - h_j(\tau_j)&\;\ll_E\; Y_j \=
Y_E&&\text{ Lemma~\ref{lem-se1} c)}\,,\\
&&\eta(U_j,V_j;\tau_j)&\;\ll_E\; 1\,.
\end{alignat*}
Note that we leave implicit the influence of the fixed quantities
$U_j$ and $V_j$ with $j\in E$, but keep their difference~$Y_j$
explicit. The difference in~\eqref{diff-eta-h} is estimated by the
following quantity, uniformly in the $\tau=\tau_\ell$ under
consideration:
\begin{align*}
&\;\ll_{\Gm,E} \sum_{j\in Q} V_j^{\tau_j+\th-\frac12}\cdot \prod_{l\in
Q\setminus\{j\}} V_l^{\tau_l+\frac12}\cdot \oh(1)
+ \sum_{j\in E} Y_E\cdot \prod_{l\in Q} V_l^{\tau_l+\frac12}\cdot
\oh(1)
\displaybreak[0]\\
&\;\ll\; \biggl( \sum_{j\in Q} V_j^{\th-1}+Y_E \biggr) \prod_{l\in Q}
V_l^{\tau_l+\frac12} \;\ll\; \bigl(\vmin^{\th-1}+Y_E\bigr)
\prod_{j\in Q}V_j\,,
\end{align*}
where we have used $\tau_j\leq \frac12$ in the last step.

We still have to estimate the finitely many $\ps_\ell(z)$. If
$\ps_\ell$ is a cusp form or if $\ell=0$, then
$|\ps_\ell(z)| = \oh_\Gm(1)$. If $\ps_\ell$, with $\ell\geq 1$,
arises from a residue of an Eisenstein series, it satisfies
$\ps_\ell(g_\k z) = \oh(N(y)^{\frac12-\r_\ell})$ as
$N(y) = \prod_j y_j \rightarrow\infty$ for all cusps $\k$, for some
$\r_\ell\in (0,\frac12)$. Hence $|\ps_\ell(z)| \ll n(z)^{1/2}$. Since
the explicit term and $K_e(z,z)$ run over finitely many~$\ell$, this
estimate can be used uniformly, thus giving the lemma. Note that here
arises another implicit dependence of the error terms on the
group~$\Gm$.
\end{proof}

\subsection{Sum over the spectrum}\label{sect-sumsp}
We turn to the estimation of $\sum_{n\in \NN^d} K_n(z,z)$, as defined
in~\eqref{Kndef}, with the given modification for $n=\onebold$. We
will use that
\begin{equation} K_n(z,z) \;\leq \; M(n)\, S_{\!n}\,,
\end{equation}
where, with $Z(n)$ as defined in~\eqref{Zndef}
\begin{align}
M(n) &\= \sup_{\tau\in Z(n)} |h(\tau)|\,,\\
\label{Sndef}
S_{\!n} &\= \sum_{\ell\geq 1\,,\; \tau_\ell \in Z(n) }
\bigl|\ps_\ell(z)\bigr|^2\\
\nonumber
&\qquad\hbox{}
+ \sum_\k 2c_\k \sum_{\mu\in \Lcal_\k} \int_{t\geq 0\,,\;
\bigl(i(t+\mu_j)
\bigr)_j \in Z(n)} \bigl| E(\k;it,i\mu;z)\bigr|^2\, dt\,.
\end{align}

\begin{lem}\label{lem-Mest}For each sufficiently small
$c\in \bigl( 0,\frac12\bigr)$ the quantity $M(n)$ has for each
$n\in \NN^d$ and each $l\in \NN^d$ an estimate
\[ M(n) \;\ll_{E,l,c} \prod_j f_{l_j,j}(n_j)\,, \]
where for all $l\in \NN$
\begin{equation}
\begin{aligned}
f_{l,j}(1) &\= \begin{cases}
V_j^{\hat\tau+\frac12}&\text{ if }j\in Q\text{ and }\hat\tau>0\,,\\
V_j^{c+\frac12}&\text{ if }j\in Q\text{ and }\hat\tau=0\,,\\
1&\text{ if }j\in E\,,
\end{cases}\\
f_{l,j}(n)&\=
\begin{cases}
n^{-l-\frac12} Y^{1-l}_j V_j^{l-\frac12}&\text{ if }j\in Q\text{ and
}n\geq 2\,,\\
n^{-l-\frac12} Y^{1-l}_j &\text{ if }j\in E \text{ and }n\geq 2\,.
\end{cases}
\end{aligned}
\end{equation}

If $E=\emptyset$ and $\hat\tau>0$, we have the slightly better
estimate
\begin{equation}
\label{M1e}
M(\onebold) \;\ll_c \; \vmin^{c-\hat\tau} \prod_j f_{l_j,j}(1)\,.
\end{equation}
\end{lem}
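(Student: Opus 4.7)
The plan is to exploit the factorization $h(\tau)=\prod_j h_j(\tau_j)$ together with the corresponding product structure $Z(n)=\prod_j Z_j(n_j)$ of the spectral region, which immediately reduces the problem to a product of one-variable suprema:
\[ M(n) \;\leq\; \prod_j M_j(n_j)\,,\qquad M_j(n_j) \;\isdef\; \sup_{\tau_j\in Z_j(n_j)} |h_j(\tau_j)|\,. \]
Each factor $M_j(n_j)$ is then estimated separately via the Selberg-transform bounds assembled in Lemmas~\ref{lem-se1}--\ref{lem-se2}; the four asserted formulas for $f_{l,j}(n)$ are precisely what those lemmas yield in the four configurations $(j\in Q\text{ or }E)\times(n_j=1\text{ or }n_j\geq 2)$.

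First I would treat $n_j=1$, so that $\tau_j$ varies over the bounded set $(0,\tfrac12)\cup i(-1,1)$. Since only genuine spectral parameters contribute to $K_n(z,z)$, the effective supremum is taken over $\re\tau_j\leq\hat\tau$. For $j\in Q$ the growth estimate from~\eqref{h1/2} combined with Lemma~\ref{lem-se1} gives $|h_j(\tau_j)|\ll_c V_j^{\re\tau_j+1/2}$, uniformly for $\re\tau_j\geq c$; taking the supremum produces $V_j^{\hat\tau+1/2}$ when $\hat\tau>0$, and $V_j^{c+1/2}$ when $\hat\tau=0$ (the positive margin $c$ being imposed by the uniformity requirement on the Selberg-transform estimate near the imaginary axis). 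For $j\in E$ the test function $k_j$ is supported on a fixed interval, so its Selberg transform is bounded by a constant depending only on the fixed data of~$E$.

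Next I would address $n_j\geq 2$, where $\tau_j$ is purely imaginary with $|\im\tau_j|\asymp n_j$. Here the key input is the polynomial-decay estimate for the Selberg transform of a smooth function with controlled derivatives: since $k_j$ satisfies \eqref{k-prop} with derivatives of the $l$-th order of size $Y_j^{-l}$ on transition zones of total length $\ll Y_j$, Lemma~\ref{lem-se1} delivers a bound of shape $|h_j(\tau_j)|\ll_l n_j^{-l-1/2}\,Y_j^{1-l}\cdot(\text{size factor})$. The size factor records the measure of the support of $k_j$, namely~$V_j$ for $j\in Q$ (producing the $V_j^{l-1/2}$ in the formula after the oscillatory-integral cancellation is taken into account) and a constant for $j\in E$. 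Collecting terms reproduces the claimed $f_{l,j}(n)$ in both cases.

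Finally, the refined estimate \eqref{M1e} uses the modified definition of $K_\onebold$: every term in that sum has $\tau_{\ell,j_0}$ purely imaginary for at least one index~$j_0$ (automatic for the continuous spectrum, imposed by hand for the discrete part when $n=\onebold$). For that~$j_0$ the purely imaginary bound $|h_{j_0}(\tau_{j_0})|\ll V_{j_0}^{1/2}$ replaces the generic $V_{j_0}^{\hat\tau+1/2}$, saving a factor $V_{j_0}^{-\hat\tau}$. Since the supremum must range over every possible choice of~$j_0$, the least favourable choice is the one minimizing $V_{j_0}$, yielding a total saving of $\vmin^{-\hat\tau}$; the weakening to $\vmin^{c-\hat\tau}$ absorbs the same $c$-margin already used in the $\hat\tau=0$ analysis above. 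The main obstacle throughout is bookkeeping, namely marshalling these four case-by-case bounds into the clean product form asserted by the lemma — once the factorization of $Z(n)$ and the one-variable Selberg-transform estimates are in place, the assembly is mechanical and no further analytic subtlety arises.
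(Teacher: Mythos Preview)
Your approach is correct and essentially identical to the paper's: factorize $M(n)$ over the coordinates via the product structure of $Z(n)$, then bound each $M_j(n_j)$ using the one-variable Selberg-transform estimates, with the refinement for $M(\onebold)$ coming from the constraint that at least one coordinate is purely imaginary. Two small corrections: the polynomial-decay bound for $n_j\geq 2$ is Lemma~\ref{lem-se2}~c) (not Lemma~\ref{lem-se1}, and \eqref{h1/2} is not the relevant input), and for $n_j=1$ the paper makes explicit the three-way split $\tau_j\in[c,\tfrac12)$, $|\tau_j|\leq c$, $\tau_j\in i[c,1]$, invoking respectively Lemma~\ref{lem-se1}~c) together with~\eqref{eta-as}, Lemma~\ref{lem-se2}~b), and Lemma~\ref{lem-se2}~c) with $l=1$ --- your treatment of this case is a bit compressed but the idea is the same.
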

We recall that $\hat\tau\in\bigl[0,\frac12\bigr]$ is the supremum of
the real parts $\re \tau_{\ell,j}$, $\ell\geq 1$, of the spectral
parameters. It measures the spectral gap.
\begin{proof}If $\hat\tau >0$, we take $c\in (0,\hat\tau)$. We use the
estimates in~\S\ref{sect-eSt} of the Selberg transforms~$h_j$ of
the~$k_j$ with this value.

If $n=1$ we have to consider $\tau_j \in (0,\hat\tau)\cup i[-1,1]$.
For $\tau\in \bigl[c,\frac12\bigr)$, which can occur only if
$\hat\tau>0$, we use Lemma~\ref{lem-se1}~c) and \eqref{eta-as} in
Lemma~\ref{lem-eta-est} to get
\[ h_j(\tau_j) \;\ll_c\; V_j^{\tau_j+\frac12}
 + Y_j \,m\,,\]
with
\[m\=\max(V_j^{\tau_j-\frac12},U_j^{-\frac12}) \text{ if
}U_j>0\,,\qquad m\=\max(V_j^{-\frac12},V_j^{\tau_j-\frac12})
 \text{ if }U_j\=0\,.\]
If $j\in Q$ we get a bound by $\oh(V_j^{\tau_j+\frac12})=
\oh(V_j^{\hat\tau+\frac12})$. For $j\in E$ the dependence on $U_j$
and $V_j$ is left implicit, so we can use the bound~$1$. If
$|\tau_j|\leq c$, Lemma~\ref{lem-se2}~b) gives the bound
$\oh_c(V_j^{c+\frac12})$ for $j\in Q$, and $\oh_{c,E} $ if $j\in E$.
For $\tau_j\in i\RR$, $c\leq |\tau_j|\leq 1$, we use
Lemma~\ref{lem-se2}~c) with $l=1$. For $j\in E$ we take care to
choose the $\dt$ in Lemma~\ref{lem-se2} such that $U_j\geq\dt$ if
$U_j>0$. We use that $k'\ll Y^{-1}$ to find $\oh(V_j^{\frac12})$ if
$j\in Q$ and $\oh(1)$ if $j\in E$.

If $n\geq 2$ we use Lemma~\ref{lem-se2}~c) and the condition
$k_j^{(l)} = \oh(Y^{-l})$ to obtain the bounds by $f_{l,j}(n)$.

In the case of $M(\onebold)$ we have the additional information that
$\tau_j \in i[-1,1]$ for at least one~$j$. If $E=\emptyset$ this
leads to the estimate in~\eqref{M1e}.
\end{proof}

The problem with $S_{\!n}$ in~\eqref{Sndef} is that we do not have a
direct estimate for it. All we have is Theorem~\ref{thm-spm}, which
gives
\begin{equation}
\sum_{m\in \NN^d\,,\; \forall_j\, m_j\leq n_j } S_{\!m} \;\ll_\Gm\;
n(n,z)
\prod_j n_j^2 \= \prod_j \max(n_j^2,\y_j(z) \,n_j)
\,.
\end{equation}
(See \eqref{nTdef} for $n(n,z)$.)
So we need to carry out a $d$-dimensional partial summation.
\begin{lem}\label{lem-sKne}
Let $z\in \uhp$. For $c\in \bigl(0,\frac12\bigr)$ as in the previous
lemma, we have if $\hat \tau>0$
\begin{equation}
\sum_{n\in \NN^d} K_n(z,z)
\;\ll_{\Gm,E,c}\; n(z) \cdot
\begin{cases}
Y_E^{-\frac12\#E} \prod_{j\in Q} V_j^{1-\frac12\th}
&\text{ if }\th\leq 1-2\hat\tau\,,\\
Y_E^{-\frac12\#E} \prod_{j\in Q} V_j^{\hat\tau+\frac12}
&\text{ if }\th \geq 1-2\hat\tau\\
&\qquad\text{ and }E\neq\emptyset\,,\\
\vmin^{\frac12-\hat\tau-\frac\th2} \prod_{j=1}^d V_j^{\hat
\tau+\frac12}
&\text{ if } 1-2\hat\tau\leq \th\leq 1-2c \\
&\qquad\text{ and }E=\emptyset\,,\\
\vmin^{c-\hat\tau} \prod_{j=1}^d V_j^{\hat \tau+\frac12}
&\text{ if }\th\geq 1-2c \\
&\qquad\text{ and }E=\emptyset\,,
\end{cases}
\end{equation}
and if $\hat\tau=0$
\begin{equation}
\sum_{n\in \NN^d} K_n(z,z)
\;\ll_{\Gm,E,c}\; n(z) \cdot
\begin{cases}
Y_E^{-\frac12 \#E} \prod_{j\in Q} V_j^{1-\frac12\th}
&\text{ if }\th \leq 1-2c\,,\\
Y_E^{-\frac12\,\#E}\, \prod_{j\in Q} V_j^{c+\frac12}
&\text{ if }\th \geq 1-2c\,.
\end{cases}
\end{equation}
\end{lem}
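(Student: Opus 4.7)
The plan is to dominate $K_n(z, z) \leq M(n) \cdot S_n$ and bound $\sum_n M(n) S_n$ by iterated $d$-dimensional Abel summation, feeding in the factorized majorant $M(n) \ll \prod_j f_{l_j, j}(n_j)$ from Lemma~\ref{lem-Mest} (with adaptively chosen~$l_j$) and the cumulative spectral estimate from Theorem~\ref{thm-spm}, written as
\[ \sum_{m \leq n} S_m \;\ll_\Gm\; n(n, z) \prod_j n_j^2 \;\leq\; n(z) \prod_j n_j^2, \]
where I weaken $n(n, z) \leq n(z)$ to pull $n(z)$ out as a uniform prefactor, allowing the $d$-dimensional Abel sum to decouple into $d$ independent $1$-dimensional sums.

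First, I would partition $\NN^d$ according to the subset $A = \{j : n_j = 1\}$ of ``low-spectrum'' coordinates. The bound from Lemma~\ref{lem-Mest} then factors: for $j \in A$ it contributes the ``spectral gap'' term $f_{l_j, j}(1)$, equal to $V_j^{\hat\tau + 1/2}$ for $j \in Q$ (or $V_j^{c + 1/2}$ if $\hat\tau = 0$; or $1$ for $j \in E$); for $j \notin A$ the $n_j \geq 2$ bound is handled by Abel summation. The key per-coordinate input is that, writing $f_{l,j}(n) = Y_j n^{-1/2} V_j^{-1/2} \cdot (V_j/(n Y_j))^l$, the bound is minimized over $l \in \NN$ by $l = 1$ when $n \leq V_j/Y_j = V_j^{1 - \th}$ and by arbitrarily large $l$ otherwise. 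Abel summation with the 1D cumulative bound~$n^2$ then gives, for $j \in Q$,
\[ \sum_{n_j \geq 2} f_{l_j, j}(n_j) \, S_{\ldots, n_j, \ldots} \;\ll\; V_j^{1/2} \sum_{2 \leq n \leq V_j^{1-\th}} n^{-1/2} \;\ll\; V_j^{1 - \th/2}, \]
while for $j \in E$ the analogous computation with $V_j$ fixed and natural cutoff $n \sim 1/Y_E$ yields a per-coordinate factor $Y_E^{-1/2}$.

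Combining, each subset $A$ contributes
\[ n(z) \prod_{j \in A \cap Q} V_j^{\hat\tau + 1/2} \prod_{j \in Q \setminus A} V_j^{1 - \th/2} \prod_{j \in E \setminus A} Y_E^{-1/2}, \]
and the total is bounded by $2^d$ times the maximum over~$A$. Since $V_j^{1 - \th/2}$ dominates $V_j^{\hat\tau + 1/2}$ exactly when $\th \leq 1 - 2\hat\tau$, and $Y_E^{-1/2} \geq 1$, the optimal $A$ for $\th \leq 1 - 2\hat\tau$ is $A = \emptyset$, giving the first case of the statement; for $\th \geq 1 - 2\hat\tau$ with $E \neq \emptyset$ the optimal $A$ is $A = Q$, giving the second case. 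When $E = \emptyset$, the improved bound \eqref{M1e} for $M(\onebold)$ injects an extra factor $\vmin^{c - \hat\tau}$ into the $A = Q$ contribution; this wins for $\th \geq 1 - 2c$ (fourth case), while in the intermediate range $1 - 2\hat\tau \leq \th \leq 1 - 2c$ the subset $A = Q \setminus \{\operatorname{argmin}_j V_j\}$ dominates, giving the factor $\vmin^{1/2 - \hat\tau - \th/2}$ of the third case. The $\hat\tau = 0$ result follows identically with $c$ replacing $\hat\tau$.

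The main obstacle is executing the $d$-dimensional Abel summation cleanly; the cumulative bound $n(n, z) \prod n_j^2$ does not factor over coordinates, but this is resolved by the relaxation $n(n, z) \leq n(z)$. One must then verify that the 1D Abel sums truncated at the natural scale $n \sim V_j^{1-\th}$ (resp.\ $1/Y_E$) assemble into the claimed bound without parasitic logarithmic factors, and, in the $E = \emptyset$ case, correctly balance the sharpened $M(\onebold)$ bound~\eqref{M1e} against the ``one coordinate at $n_j \geq 2$'' contributions to identify the dominant~$A$ in each $\th$-range.
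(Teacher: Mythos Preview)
Your proposal is correct and follows essentially the same strategy as the paper: partition $\NN^d$ by the set of coordinates with $n_j=1$ (the paper's $F$ is your $A^c$), perform a factored multi-dimensional Abel summation against the cumulative spectral bound from Theorem~\ref{thm-spm} using the envelope $\min_l f_{l,j}$ with the transition at $n_j\sim V_j/Y_j$, and then optimize over the partition, invoking~\eqref{M1e} for the $n=\onebold$ block when $E=\emptyset$. The only cosmetic difference is that you relax $n(n,z)\le n(z)$ at the outset so the Abel sum decouples immediately, whereas the paper carries the finer weight $\max(m_j^2,\y_j(z)m_j)$ through the one-dimensional sums and simplifies to $n(z)$ only at the end---both routes yield the same final bound.
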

\begin{proof}
We have
$S_{\!n} \= \sum_{H\subset\{1,\ldots,d\}} (-1)^{\#H} S(Y(n-\onebold_H);z,z)$,
where $\onebold_H \in \NN_{\geq 0}^d$ has coordinate $1$ if $j\in H$
and coordinate $0$ otherwise. We understand that $S(Y(m);z,z)$ is
zero if one of the coordinates of $m$ vanishes.

To estimate $\sum_n K_n(z,z)$ it suffices to consider
\begin{equation}\label{sumSM}
 \sum_{m\in \NN^d} S\bigl( Y(m);z,z) \sum_{H \subset \{1,\ldots,d\}}
(-1)^{\#H} M(m+\onebold_H)\,.
\end{equation}
For $S\bigl(Y(m);z,z)$ we have an estimate with product structure.
Lemma~\ref{lem-Mest} estimates $M(n)$ also by a product over the
places, with the exception of $n=\onebold$ in some cases. To handle
that exception we take subsums $T_F$ of~\eqref{sumSM} characterized
by $m_j\geq 2$ if and only if $j\in F$, with $F$ running over the
nonempty subsets of~$\{1,\ldots,d\}$.
\begin{align}
\nonumber
T_F&\;\ll_{\Gm E,l,c}\; \sum_{m\in \NN^d\,,\; m_j\geq 2\Leftrightarrow
j\in F} \max\bigl( m_j^2,\y_j(z)\, m_j\bigr)\\
\nonumber
&\qquad\hbox{} \cdot
 \sum_{H\subset \{1,\ldots,d\}} (-1)^{\#H} \prod_{j\in H }
f_{l,j}(m_j+1) \; \prod_{j\not\in H} f_{l,j}(m_j)
\displaybreak[0]\\
\label{TFprod}
&\= \prod_{j\in F} \sum_{m_j\geq 2} \max\bigl( m_j^2,\y_j(z)\,
m_j\bigr) \bigl( f_{l,j}(m_j) - f_{l,j}(m_j+1)\bigr) \\
\nonumber
&\qquad\hbox{} \cdot
 \prod_{j\not\in F} \max\bigl( 1, \y_j(z)
\bigr) \bigl( f_{l,j}(1) - f_{l,j}(2)\bigr)\,.
\end{align}
For each $j$ and each $m_j$ we will choose a value of $l$ in
$f_{l,j}(m_j)$. Since the implicit constant in the estimate depends
on $l$, we have to choose the $l$ from a finite set in $\NN$. For
``small values'' of $m_j\geq 2$ we take $l=1$, and $l=3$ for
``large'' values. We will determine the boundary between small and large
in a moment. If $m_j=1$, $f_{j,l}(1)$ does not depend on~$l$.

For $j\in F$, the sum
\[ \sum_{m_j\geq 2} \max\bigl( m_j^2,\y_j(z)\, m_j\bigr) \bigl(
f_{l,j}(m_j) - f_{l,j}(m_j+1)\bigr)\]
has two critical values of $m_j$. The first occurs near
$m_j \approx \y_j(z)$. The other one, which we denote by $A_j$ occurs
where $f_{1,j}(m_j) \approx f_{3,j}(m_j)$. We take $l=3$ for $m \geq
A_j+1$ and $l=1$ for $2\leq m_j \leq A_j$, with
$A_j = \bigl[V_j/Y_j\bigr] =  \bigl[V_j^{1-\th}\bigr]$ if $j\in Q$
and $A_j=\bigl[Y_E^{-1}\bigr]$ for $j\in E$. The choices of the
parameters in \S\ref{sect-mp} and~\S\ref{sect-ap} is such that $A_j$
is a large quantity in all cases.

With the notations $P_1=V_j^{1/2}$, $P_2=V_j^{\frac 52-2\th}$ if
$j\in Q$, and $P_1=1$, $P_2=Y_E^{-2}$ if $j\in E$, the sum for
$j\in F$ is equal to
\begin{equation}\label{smex}
\begin{aligned}
\sum_{2\leq m \leq A_j-1}& \max\bigl(m^2, \y_j(z)\, m\bigr) \bigl(
m^{-\frac32} -
(m+1)^{-\frac32} \bigr) P_1\\
&\hbox{}+ \max\bigl( A_j^2,\y_j(z)\, A_j\bigr) \bigl( A_j^{-\frac32}
P_1 -
(A_j+1)^{-\frac 72} P_2\bigr)\\
&\hbox{}+ \sum_{m\geq A_j+1} \max\bigl( m^2, \y_j(z)\, m\bigr) \bigl(
m^{-\frac72}- (m+1)^{-\frac 72} \bigr) P_2\,.
\end{aligned}
\end{equation}
If $\y_j(z) \geq A_j$, the first of the sums is estimated by
\[ \y_j(z) \sum_{2\leq m \leq A_j-1} m^{-\frac 32} P_1 \;\ll\; \y_j(z)
\, P_1\,.\]
If $\y_j(z)< A_j$ we split up the first sum, and obtain
\begin{align*}
\sum_{2\leq m <\y_j(z)} m^{-\frac 32} P_1 + \sum_{\y_j(z)<m\leq A_j-1}
m^{-\frac12 } P_1 \;\ll\; \y_j(z)\, P_1 + A_j^{\frac12}P_1 \,.
\end{align*}
For the last sum we obtain if $\y_j(z) \geq A_j$
\begin{align*}
\sum_{A_j+1\leq m < \y_j(z)} \y_j(z) m^{-\frac 72} P_2 + \sum_{m\geq
\y_j(z)} m^{-\frac 52} P_2 \;\ll\; \y_j(z) \, A_j^{-\frac 52}P_2 +
\y_j(z)^{-\frac 32} P_2\,,
\end{align*}
and if $\y_j(z)< A_j$
\[ \sum_{m\geq A_j+1} m^{-\frac 52} P_2 \;\ll\; A_j^{-\frac 32}P_2\,.
\]
The transitional middle term in~\eqref{smex} can be estimated by
\[ \max\bigl( A_j, \y_j(z)\bigr) A_j^{-\frac12}P_1\,.\]

In total we get for $\y_j(z) \geq  A_j$
\begin{align*}
&\;\ll\; \y_j(z) \bigl(P_1 + A_j^{-\frac 52} P_2\bigr) +
\y_j(z)^{-\frac 32} P_2 + \y_j(z) A_j^{-\frac12}P_1\\
&\;\ll\; \begin{cases}
\y_j(z) V_j^{\frac12} + \y_j(z)^{-\frac 32} V_j^{\frac 52-2\th}
&\text{ if }j\in Q\,,\\
\y_j(z)+\y_j(z)^{-\frac32} Y_E^{-2}&\text{ if }j\in E\,,
\end{cases}
\end{align*}
and for $\y_j(z)<A_j$
\begin{align*} &\;\ll\; \y_j(z) P_1
+ A_j^{\frac12}P_1 + A_j^{-\frac32}P_2 + A_j^{\frac12}P_1\\
&\;\ll\; \begin{cases}
\y_j(z)\, V_j^{\frac12} + V_j^{1-\th/2}
&\text{ if }j\in Q\,,\\
\y_j(z) + Y_E^{-\frac12}
&\text{ if }j\in E\,.
\end{cases}
\end{align*}
In the case $\y_j(z) \geq A_j$ we use that
$\y_j(z)^{-\frac32} V_j^{\frac 52-\th} \leq V_j^{1-\frac12\th}$ in
the case $j\in Q$ and $\y_j(z)^{-\frac 32} Y_E^2 \leq Y_E^{-\frac12}$
for $j\in E$, to get the following bound for the quantity
in~\eqref{smex}:
\begin{equation}
\begin{aligned}
&\;\ll\; \begin{cases}
\y_j(z)\, V_j^{\frac12} + V_j^{1-\frac12\th}
&\text{ if }j\in Q\,,\\
\y_j(z) + Y_E^{-\frac12}
&\text{ if }j\in E\,,
\end{cases}\\
&\;\ll\;
 \begin{cases}
 n_j\bigl( V_j^{\frac12-\frac12\th},z\bigr)\, V_j^{1-\frac12\th}
&\text{ if }j\in Q\,,\\
n_j\bigl(Y_E^{-\frac12} ,z\bigr)\, Y_E^{-\frac12}
&\text{ if }j\in E\,.
\end{cases}
\end{aligned}
\end{equation}
So this estimates the factors with $j\in F$ in~\eqref{TFprod}.

We estimate the factors for $j\not\in F$ by $n_j(z)f_{l,j}(1) $. By
Lemma~\ref{lem-Mest} we get
\begin{equation}
\begin{aligned}
T_F &\;\ll_{E,c}\; \prod_{j\in F \cap Q}
n_j\bigl(V_j^{\frac12-\frac12\th},z\bigr)\, V_j^{1-\frac12\th} \;
\prod_{j\in Q \setminus F} V_j^{\max(c,\hat\tau)+\frac12}
\\
&\qquad\hbox{} \cdot
\prod_{j\in F \cap E} n_j\bigl(Y_E^{-\frac12},z\bigr)\,
Y_E^{-\frac12}\; \prod_{j\in E \setminus F} 1\,.
\end{aligned}
\end{equation}
Since we have already $n(z)$ in the error term in Lemma~\ref{lem-KE},
it seems sensible to replace $n_j(\ast,z) $ by $n_j(1,z)$ in these
estimates. So we put $n(z)$ in front, and remove the $n_j(\cdots)$
from the products.

The next step is to determine which non-empty $F\subset\{1,\ldots,d\}$
has the maximal contribution. The factors for $j\in E$ are maximal if
$j\in F$. So we consider $F\supset E$. The factors for $j\in Q$ are
maximal for $j\not\in F$ if $\th\geq 1-2\max(c,\hat\tau)$, and
maximal for $j\in F$ otherwise. If $E=\emptyset$, we have to put one
place in~$F$ anyhow, which gives the maximal contribution if
$V_j=\vmin$. We find the following maximal value:
\begin{equation}
\begin{cases}
n(z)\, Y_E^{-\frac12\#E} \prod_{j\in Q} V_j^{\max(c,\hat
\tau)+\frac12}
&\text{ if }\th\geq 1-2\max(c,\hat\tau)\text{ and }E\neq
\emptyset\,,\\
n(z) \vmin^{\frac12-\max(c,\hat\tau)-\frac12\th} \prod_{j\in Q}
V_j^{\max(c,\hat \tau)+\frac12}
&\text{ if }\th\geq 1-2\max(c,\hat \tau)\text{ and }\#Q=d\,,\\
n(z) \, Y_E^{-\frac12\#E} \prod_{j\in Q\cap F} V_j^{1-\frac12\th}
&\text{ if }\th\leq 1-2\max(c,\hat\tau)
\,.
\end{cases}
\end{equation}
In the latter case, the maximum is attained for $F=\{1,\ldots,d\}$,
and in the former case for $F=E$, if $E\neq\emptyset$. If
$E=\emptyset$, one place has to be in $F$, and $j$ such that
$V_j=\vmin$ gives the maximal value.

Finally we have to consider the term with $m=\onebold$
in~\eqref{sumSM}. It is estimated by
\begin{equation}
S\bigl( Y(\onebold);z,z) M(\onebold) \ll n(z)\cdot
\begin{cases}
\prod_{j\in Q} V_j^{\max(c,\hat\tau)+\frac12}
&\text{ if }E\neq \emptyset\,,\\
\vmin^{c-\max(c,\hat\tau)}\, \prod_{j\in Q}
V_j^{\max(c,\hat\tau)+\frac12}
&\text{ if }E=\emptyset\,.
\end{cases}
\end{equation}
If $E\neq \emptyset$ or if $\th\leq 1-2\max(c,\hat\tau)$ this is
absorbed in the term that we have already obtained. In the case
$E=\emptyset$ and $\th \geq 1-2\max(c,\hat \tau)$ we have to compare
the factors $\vmin^{\frac12-\max(c,\hat\tau)-\frac12\th}$ and
$\vmin^{c-\max(c,\hat\tau)}$. If $\hat\tau=0$, we have
$\max(c,\hat\tau)=c$, in which case the latter factor, $\vmin^0=1$,
is the largest. In the remaining case there is another transition
point at $\th=1-2c$.

This leads to the statements in the lemma. We resist the temptation to
simplify the lemma by choosing $c< \frac{1-\th}2$. That would cause a
dependence of the implicit constant in the estimates on the auxiliary
parameter~$\th$.
\end{proof}

\subsection{Difference between sums with sharp and smooth
bounds}\label{sect-shsm}
We have obtained $K(z,z)=\Ecal(U,V;z)+\mathit{Err}_1 + \mathit{Err}_2$
for the sum $K(z,z)$ in~\eqref{Kdef1}, the explicit term
$\Ecal(U,V;z)$ in~\eqref{Ecal-def}, with error terms $\mathit{Err}_1$
estimated in Lemma~\ref{lem-KE} and $\mathit{Err}_2$ in
Lemma~\ref{lem-sKne}. The sum $K(z,z)$ depends on the choice of the
local test functions $k_j$ as indicated in~\eqref{k-prop}. In
particular the estimate is valid for the sum $K^+(z,z)$ based on test
functions with $k_j^+=1$ on $[U_j,V_j]$ for all~$j$, and also for the
sum $K^-(z,z)$ built with $\supp(k_j^-)\subset [U_j,V_k]$. Since the
characteristic function $\ch$ of $\prod_j[U_j,V_j)$ satisfies
$\prod_j k^-_j \leq \ch \leq \prod_j k^+_j$, we have
$K^-(z,z)\leq \cnt(U,V;z) \leq K^+(z,z)$. Thus we have also
\begin{equation}
\cnt(U,V;z) \= \Ecal(U,V;z) +\mathit{Err}_1 + \mathit{Err}_2\,,
\end{equation}
with error terms satisfying the estimates in Lemmas~\ref{lem-KE}
and~\ref{lem-sKne}.

\subsection{Asymptotic estimate, case $E=\emptyset$}\label{sect-Qa}
First we choose the auxiliary parameters in the case $E=\emptyset$.
This leads to the asymptotic result in Theorem~\ref{thm-Qa}, of which
Theorem~\ref{thm-Qai} is a special case.

The auxiliary parameter $\th\in(0,1)$ has to be adapted to the $V_j$
to get the minimal value of the bound
\begin{equation}
\begin{aligned}
&\ll_{\Gm,c}\; n(z) \vmin^{\th-1}\prod_j V_j\\
&\qquad\hbox{}
 + n(z) \cdot
\begin{cases}
\prod_j V_j^{1-\frac12\th}&\text{ if }\th\leq 1-2\max(c,\hat\tau)\,,\\
\vmin^{\max\bigl( \frac{1-\th}2,c \bigr)-\hat\tau} \prod_j
V_j^{\frac12+\max(c,\hat\tau)}
&\text{ if }\th\geq 1-2\max(c,\hat\tau)\,.
\end{cases}
\end{aligned}
\end{equation}
The parameter $c\in \bigl(0,\frac12\bigr)$ is allowed to depend
on~$\Gm$, but not on the~$V_j$. We have assumed that $0<c<\hat\tau$
if $\hat\tau>0$. The $V_j$ influence the estimate by their product
$\prod_j V_j$, which tends to~$\infty$. We have prescribed that the
minimal $V_j$ is coupled to the product by
$\vmin^{\hat q}=\prod_j V_j$, with $\hat q\geq d$.
(See~\eqref{hatq}.)
Expressing the logarithm of the quantity to consider in terms of
$\log\vmin$, we arrive at the following quantity to minimize:
\begin{equation}\label{lm1}
 \begin{cases}
\max\bigl( \th-1+\hat q ,\; \hat q-\frac12\hat q \th \bigr)&\text{ if
}\th \leq 1-2\max(c,\hat\tau)\,,\\
\max\bigl( \th-1+\hat q ,\;\frac{\hat q}2 + (\hat
q-1)\hat\tau+\max\bigl( \frac{1-\th}2,c\bigr) \bigr)&\text{ if }\th
\geq 1-2\hat\tau\,,\; \hat\tau>0\,,\\
\max\bigl( \th-1+\hat q ,\;\hat q\bigl(\frac12+c \bigr) +\max\bigl(
\frac{1-\th}2,c\bigr) \bigr)&\text{ if }\th \geq 1-2c\,,\;
\hat\tau=0\,.
\end{cases}
\end{equation}
We choose $0<c < \frac1{18} $ in addition to the requirement that
$c<\hat\tau$ if $\hat\tau>0$.

If $\hat\tau=0$ the value $\th_1$ for which
$\th_1-1+\hat q=\hat q-\frac12\hat q\th_1$ is
$\th_1=\frac2{\hat q+2}$. Since $\hat q\geq d\geq 1$, we have
$\th_1\leq \frac23 <1-2c$. Hence this is the optimal choice.

For $\hat\tau>0$ we have $\th_1=\frac2{\hat q+2}$ and
$\th_2=1-\frac13\hat q+\frac23(\hat q-\nobreak 1)\hat\tau$, for the
intersections of the graph of $\th\mapsto \th-1+\hat q$ with
respectively, $\th\mapsto \hat q-\frac12\hat q\th$ and
$\th \mapsto \frac12\hat q+(\hat q-\nobreak 1)\hat\tau+\frac{1-\th}2$.
If $\hat\tau \leq \frac{\hat q}{2(\hat q+2)}$, then
$\th_1 \leq 1-2\hat\tau$ gives the optimal choice. Otherwise,
$\th_2 \geq 1-2\hat\tau$ is optimal, since it is between
$1-2\hat\tau$ and $1-2c$.

This leads to the following optimal bound of the quantity
in~\eqref{lm1}:
\begin{equation} \hat q\frac{\hat q+1}{\hat q+2}\;\text{ if }0\leq
\hat\tau\leq \frac {\hat q}{2(\hat q+2)}\,,\qquad \hat q
\frac{2(\hat\tau+1)}3-\frac{2\hat\tau}3\;\text{ if }\hat \tau \geq
\frac{\hat q}{2(\hat q+2)}\,.
\end{equation}
Now we have chosen $c$ depending only on quantities determined
by~$\Gm$, and we have arrived at the following estimate:
\begin{equation}
\cnt(U,V;z) - \Ecal(U,V;z) \;\ll_\Gm n(z)\cdot
\begin{cases}
\prod_j V_j^{\frac{\hat q+1}{\hat q+2}}
&\text{ if }\hat\tau\leq \frac{\hat q}{2(\hat q+2)}\,,\\
\prod_j V_j^{\frac23(\hat \tau+1)-\frac{2\hat\tau}{3\hat q}}
&\text{ if }\hat\tau\geq \frac{\hat q}{2(\hat q+2)}\,.
\end{cases}
\end{equation}
If there are totally exceptional eigenvalues, the explicit sum
$\Ecal(U,V;z)$ in~\eqref{Ecal-def} contains the corresponding terms,
which are of the size $\oh\bigl( n(z) \prod_jV_j^{\frac12+\hat
\tau}\bigr)$. Since $\tau_{\ell,j}\leq \hat\tau$ for all exceptional
coordinates, these terms are swallowed by the error term obtained for
the case $\hat\tau \leq \frac{\hat q}{2(\hat q+2)}$.

We have thus obtained the following asymptotic result for the counting
function:
\begin{thm}\label{thm-Qa}Let $\Gm$ be an irreducible lattice in
$\PSL_2(\RR)^d$ with $d\in \NN$. Let $\hat\tau$ be the quantity
in~\eqref{tau-hat}, measuring the spectral gap. Denote by
$V_j\geq 1$, $1\leq j \leq d$, large quantities subject to the
condition $\min_j V_j^{\hat q} = \prod_j V_j$ for a fixed number
$\hat q\geq d$. Choose $U_j=0$ or $U_j=V_j$ for each $j=1,\ldots,d$.

Let $z\in \uhp^d$. The number $\cnt(U,V;z)$ of $\gm\in \Gm$ such that
$U_j \leq u\bigl( (\gm z)_j,z_j) \leq V_j$ for all~$j$, with
$u(\cdot,\cdot)$ as in~\eqref{u-d}, satisfies
\begin{equation}
\cnt(U,V;z) \= \frac{(4\pi)^d}{\vol( \Gm\backslash\uhp^d)}
\prod_{j=1}^d (V_j-U_j) + \oh_\Gm \biggl( n(z) \prod_{j=1}^d V_j
^{(\hat q+1)/(\hat q+2)} \biggr)
\end{equation}
if $\hat\tau \;\leq\; \frac{\hat q}{2(\hat q+2)}$, and
\begin{equation}
\begin{aligned}
\cnt(U,V;z) &\= \frac{(4\pi)^d}{\vol( \Gm\backslash\uhp^d)}
\prod_{j=1}^d (V_j-U_j) + \sum_{\ell\geq 1\,,\; \forall_j\;
\tau_{\ell,j} \in
(0,\frac12)} \bigl|\ps_\ell(z)\bigr|^2 \\
&
\qquad\qquad\hbox{} \cdot
\prod_{j=1}^d \frac{\sqrt\pi \, 2^{1+2\tau_{\ell,j}}
\,\Gf(\tau_{\ell,j})}{\Gf\bigl(\frac32+\tau_{\ell,j}\bigr)}\bigl(
V_j^{\frac12+\tau_{\ell,j}} - U_j^{\frac12+\tau_{\ell,j}}\bigr)
% attempt to align both formulas:
\quad\hbox{ }
\\
&\qquad\hbox{}
+ \oh_{\Gm}\biggl(n(z) \prod_{j=1}^d V_j ^{\frac23(\hat
\tau+1)-2\hat\tau/3\hat q } \biggr)
\end{aligned}
\end{equation}
if $\hat\tau \;\geq \; \frac{\hat q}{2(\hat q+2)}$. The factor $n(z)$
is as in~\eqref{ndef}.
\end{thm}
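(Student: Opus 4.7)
The plan is to combine the three error contributions listed in Table~\ref{tab-est}, specialized to the case $E=\emptyset$ (so $Q=\{1,\ldots,d\}$, $\hat q\geq d$, and the auxiliary parameter $Y_E$ plays no role), and then optimize over the single remaining auxiliary parameter $\th\in(0,1)$. The discussion in \S\ref{sect-shsm} already identifies $\cnt(U,V;z)$ with $\Ecal(U,V;z)+\mathit{Err}_1+\mathit{Err}_2$ by squeezing between $K^{-}(z,z)$ and $K^{+}(z,z)$, so no additional term arises from replacing the sharp characteristic function by its smooth bracket. Thus the bulk of the work is to feed Lemmas~\ref{lem-KE} and~\ref{lem-sKne} into a single cumulative bound and to pick $\th$ well.

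First I would write out the combined error, using $\hat q\geq d$ and $Y_E$ absent. Lemma~\ref{lem-KE} contributes $n(z)\vmin^{\th-1}\prod_j V_j$, while Lemma~\ref{lem-sKne} contributes either $n(z)\prod_j V_j^{1-\th/2}$ (in the regime $\th\leq 1-2\max(c,\hat\tau)$) or $n(z)\,\vmin^{\max((1-\th)/2,\,c)-\hat\tau}\prod_j V_j^{1/2+\max(c,\hat\tau)}$ (in the complementary regime, which only exists when $E=\emptyset$). Using the coupling $\vmin^{\hat q}=\prod_j V_j$, I would pass to logarithms and rewrite the exponent of $\vmin$ in each summand so that the whole estimate reads $n(z)\,\vmin^{f(\th)}$ for a piecewise linear $f$ as displayed in the three-case expression~\eqref{lm1}.

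Next comes the minimization of $f(\th)$. I would choose, once and for all, a small parameter $c<1/18$ (and $c<\hat\tau$ when $\hat\tau>0$), so that the dependence on $c$ is absorbed into the $\oh_\Gm$-constant. The two relevant intersection points are $\th_1=2/(\hat q+2)$, where the main-term exponent $\th-1+\hat q$ meets $\hat q-\hat q\th/2$, and $\th_2=1-\frac13\hat q+\frac23(\hat q-1)\hat\tau$, where it meets $\hat q/2+(\hat q-1)\hat\tau+(1-\th)/2$. A direct comparison shows that $\th_1\leq 1-2\hat\tau$ precisely when $\hat\tau\leq \hat q/(2(\hat q+2))$, in which case $\th=\th_1$ is optimal and yields exponent $\hat q(\hat q+1)/(\hat q+2)$; otherwise $\th_2$ lies between $1-2\hat\tau$ and $1-2c$ (here the smallness of $c$ is used) and is optimal, producing exponent $\frac{2\hat q(\hat\tau+1)}{3}-\frac{2\hat\tau}{3}$. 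Converted back to the $V_j$ via $\vmin^{\hat q}=\prod_j V_j$, these are exactly the exponents stated in Theorem~\ref{thm-Qa}.

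Finally I would verify that the explicit term $\Ecal(U,V;z)$ in~\eqref{Ecal-def} already contains all contributions of the right asymptotic order. In the small-gap regime $\hat\tau\geq \hat q/(2(\hat q+2))$ each totally exceptional term is of size $\oh(n(z)\prod_jV_j^{1/2+\tau_{\ell,j}})$ with $\tau_{\ell,j}\leq\hat\tau$, and must be displayed because its exponent can exceed the error exponent; in the large-gap regime the same terms have exponent at most $d(1/2+\hat\tau)\leq \hat q(\hat q+1)/(\hat q+2)$ and are therefore absorbed by the error. The main obstacle I anticipate is not the algebra of the optimization but the bookkeeping: keeping implicit constants depending only on $\Gm$ (through the finite set of totally exceptional eigenvalues, the choice of $c$, and the geometric data in Lemmas \ref{lem-apl}--\ref{lem-ape}) while letting all $V_j\to\infty$ under the constraint $\vmin^{\hat q}=\prod_j V_j$, so that the break at $\hat\tau=\hat q/(2(\hat q+2))$ is genuinely sharp in the statement.
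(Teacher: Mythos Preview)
Your proposal is correct and follows the paper's own argument in \S\ref{sect-Qa} essentially step by step: combine Lemmas~\ref{lem-KE} and~\ref{lem-sKne} with $E=\emptyset$, rewrite the error as $n(z)\,\vmin^{f(\th)}$ via the coupling $\vmin^{\hat q}=\prod_jV_j$, fix $c<\tfrac1{18}$ (and $c<\hat\tau$ when $\hat\tau>0$), optimize over~$\th$ at the intersection points $\th_1=2/(\hat q+2)$ and $\th_2$, and finally absorb the totally exceptional terms into the error in the large-gap regime. The only slip is cosmetic: when you bound the exceptional contribution you write its exponent as $d(\tfrac12+\hat\tau)$, but the comparison is really between the per-factor exponents $\tfrac12+\hat\tau$ and $(\hat q+1)/(\hat q+2)$ (equivalently, the $\vmin$-exponents $\hat q(\tfrac12+\hat\tau)$ and $\hat q(\hat q+1)/(\hat q+2)$); this does not affect the conclusion.
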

Note that even if there is no spectral gap ($\hat\tau=\frac12$) the
error term is still smaller than the main term.

In the special case when all $V_j$ are equal to the same quantity~$V$
we have $\hat q=d$. The relation~\eqref{u-d} implies that the
condition $\dist\bigl( (\gm z)_j,z_j \bigr)\leq T$ is equivalent to
$u\bigl( (\gm z)_j,
z_j \bigr)\leq \frac14 e^T \bigl(1+\nobreak \oh(e^{-T} )\bigr)$.
Thus, we obtain Theorem~\ref{thm-Qai}.

\subsection{Asymptotic estimate, case
$E\neq \emptyset$}\label{sect-QEa}We turn to the case when both parts
of the partition $\{1,\ldots,d\}=Q\sqcup E$ are non-empty.

We have obtained the following estimate for the error terms:
\begin{equation}{}\;\ll_{\Gm,c,E}\; n(z) \bigl(
\vmin^{\th-1}+Y_E\bigr) \prod_{j\in Q} V_j
+ n(z) Y_E^{-\frac12\#E} \prod_{j\in Q} V_j ^{\max(1-\frac12 \th,
\frac 12+\hat\tau,\frac12+c)}\,.
\end{equation}
We try to choose $\th$ and $Y_E$ optimally, depending on the~$V_j$,
$j\in Q$. The parameter $c\in \bigl(0,\frac12\bigr)$ satisfies
$c<\hat\tau$ if $\hat \tau>0$, and can be further adapted to the
situation, but is not allowed to depend on the~$V_j$ with $j\in Q$.

We take $x=\log\vmin$ as the large variable. Then
$\prod_{j\in Q} V_j = e^{\hat q x}$, with $\hat q\geq q$ fixed. We
assume that $Y_E = \vmin^{-\eta}$ with $\eta>0$ is a sensible choice.
To simplify the formulas we work for the moment with the notations
$e=\#E$, $m=\max(c,\hat\tau)$. So $0<m\leq \frac12$. We try to choose
$\th\in [0,1]$ and $\eta\geq 0$ such that the following quantity is
minimal:
\begin{equation}
M(\eta,\th) \= \max\biggl( \th-1+\hat q,\, \hat q-\eta, \,\frac
e2\eta+\hat q\biggl(-\frac\th2\biggr), \,\frac e2\eta+\hat
q\biggl(\frac12+m\biggr) \biggr)\,.
\end{equation}
We allow for the moment $\th$ and $\eta$ to assume boundary values. If
we end up with an optimal choice on the boundary, we will see how to
handle the problem of satisfying the conditions in~\S\ref{sect-ap}.

The lines $\th+\eta=1$ and $\th=1-2m$ give four subsets of the region
in which $(\eta,\th)$ varies:
\[\setlength\unitlength{3cm}
\begin{picture}(2,1)
\put(0,1){\line(1,0){2}}
\put(0,0){\line(1,0){2}}
\put(0,0){\line(0,1){1}}
\put(-.05,.97){$\scriptstyle 1$}
\put(-.25,.72){$\scriptstyle 1-2m$}
\put(-.05,-.01){$\scriptstyle 0$}
\put(-.3,.48){$\scriptstyle \th$}
\put(-.02,-.07){$\scriptstyle 0$}
\put(.98,-.07){$\scriptstyle 1$}
\put(1.5,.01){$\scriptstyle \eta$}
\put(.06,.8){$\scriptstyle A$}
\put(.8,.8){$\scriptstyle B$}
\put(.06,.4){$\scriptstyle C$}
\put(.8,.4){$\scriptstyle D$}
\thicklines
\put(0,.75){\line(1,0){2}}
\put(0,1){\line(1,-1){1}}
\end{picture}
\]
We have
\[M(\eta,\th)\=
\begin{cases}
\max\bigl( \hat q-\eta,\,\frac e2\eta+\hat q(\frac12+m) \bigr)&\text{
on }A\,,\\
\max\bigl( \hat q-1+\th,\, \frac e2\eta+\hat q(\frac12+m)
\bigr)&\text{ on }B\,\\
\max\bigl(\hat q-\eta,\, \frac e2\eta+\hat q(1-\frac \th2)
\bigr)&\text{ on }C\,,
\\
\max\bigl( \hat q-1+\th,\, \frac e2\eta+\hat q(1-\frac \th2)
\bigr)&\text{ on }D\,.
\end{cases}\]
On $B$ and~$D$ these expressions for $M(\eta,\th)$ contain $\eta$ only
once. So in the search for optimal values we can take $\eta$ minimal
in these cases. This brings them into the cases $C$ and $D$,
respectively. In region~$C$, the variable $\th$ occurs only once. So
it makes sense to take it optimal, {\sl i.e.}, $\th=1-2m$ if
$0\leq \eta\leq 2m$ and $\th=1-\eta$ if $2m\leq \eta\leq 1$. This
reduces our search for the optimum to the lines $E$ and~$F$ in the
following figure.
\[\setlength\unitlength{3cm}
\begin{picture}(2,1)
\put(0,1){\line(1,0){2}}
\put(0,0){\line(1,0){2}}
\put(0,0){\line(0,1){1}}
\put(-.05,.97){$\scriptstyle 1$}
\put(-.25,.72){$\scriptstyle 1-2m$}
\put(-.05,-.01){$\scriptstyle 0$}
\put(-.3,.48){$\scriptstyle \th$}
\put(-.02,-.07){$\scriptstyle 0$}
\put(.98,-.07){$\scriptstyle 1$}
\put(.24,-.07){$\scriptstyle 2m$}
\put(1.5,.01){$\scriptstyle \eta$}
\put(.06,.765){$\scriptstyle E$}
\put(.51,.51){$\scriptstyle F$}
\thicklines
\put(0,.75){\line(1,0){.25}}
\put(.25,.75){\line(1,-1){.75}}
\end{picture}
\]

Thus, we are left with a one-dimensional problem: Find the minimum for
$0\leq \eta\leq 1$ of the maximum of the two functions
$\al(\eta) = \hat q-\eta$ and \[ \bt(\eta) \= \begin{cases} \frac
e2\eta+\hat q\bigl( \frac12+m\bigr)&\text{ for }0 \leq \eta\leq
2m,,\\
\frac12\bigl(e+\hat q\bigr)+\frac12\hat q&\text{ for }2m\leq \eta\leq
1\,.
\end{cases}
\]
\[ \setlength\unitlength{3.5cm}
\begin{picture}(1.2,1)
\put(0,0){\line(1,0){1}}
\put(0,0){\line(0,1){1}}
\put(-.02,-.07){$\scriptstyle 0$}
\put(.98,-.07){$\scriptstyle 1$}
\put(.24,-.07){$\scriptstyle 2m$}
\put(0,.1){\line(1,0){1.1}}
\put(.25,.2){\line(1,0){.85}}
\put(1,.95){\line(1,0){.1}}
\put(.6,.59){$\scriptstyle\bt$}
\put(1.12,.95){$\scriptstyle \frac e2+\hat q$}
\put(1.12,.2){$\scriptstyle (\frac12+m)\hat q+em$}
\put(1.12,.1){$\scriptstyle (\frac12+m)\hat q$}
\put(.1,.77){$\scriptstyle\al$}
\put(.21,.67){$\scriptstyle ?$}
\put(-.1,.8){$\scriptstyle \hat q$}
\thicklines
\put(0,.1){\line(5,2){.25}}
\put(.25,.2){\line(1,1){.75}}
\put(0,.8){\vector(2,-1){.2}}
\end{picture}
\]
Since $\bigl(\frac12+m\bigr)\hat q \leq \hat q \leq \hat q+\frac e2$,
the graphs of $\al$ and~$\bt$ intersect for some value of $\eta$ in
$[0,1]$. This gives the value of the optimum we look for. This leads
to the optimal value
\begin{alignat*}2 M\biggl( \frac{ \hat q}{\hat q+e+2}, \frac{e+2}{\hat
q+e+2} \biggr) &\= \frac{\hat q+e+1}{\hat q+e+2}\hat q&\quad&\text{
if } m \leq \frac{\hat q}{2(\hat q+e+2)}\,,\\
M\biggl( \frac{(1-2m)\hat q}{e+2},1-2m\biggr) &\=
\frac{e+1+2m}{e+2}\hat q&&\text{ if }m\geq \frac{\hat q}{2(\hat
q+e+2)}\,.
\end{alignat*}

We note that if $m=\max(x,\hat \tau)=\frac12$ (no spectral gap), the
optimal value is at the boundary point $(0,1)$, which we did not want
to use. However, this optimal value is $\hat q\geq q$, hence it
provides an error term that swallows the main term. So we assume that
$\hat \tau<\frac12$ from this point on.

If $\hat \tau>0$, we can just replace $m$ by~$\hat \tau$ in the
results. If $\tau=0$ we take $0< c< \frac{\#Q}{2(d+2)}$. This depends
on $d$, hence on~$\Gm$, and on the partition in $Q$ and~$E$. We now
have
\[ \frac{\hat q}{2(\hat q+e+2)}\= \frac12-\frac{2+e}{2(\hat q+2+e)}
\geq \frac12-\frac{2+e}{2(2+e+\#Q)} \= \frac{\#Q}{2(d+e)}>c=m\,.\]
So we can apply the estimate for $m\leq \frac{\hat q}{2(\hat q+e+2)}$.

Thus, we obtain the following estimates for the error terms:
\begin{equation}\label{Er-Ene}
\begin{aligned}
&\oh_{\Gm,E}\biggl( n(z)\, \prod_{j\in Q} V_j ^{(\hat q+1+\#E)/(\hat
q+2+\#E)} \biggr)&&\text{ if }\hat \tau \leq \frac {\hat q}{2(\hat
q+2+\#E)}\,,\\
&\oh_{\Gm,E} \biggl( n(z)\, \prod_{j\in Q} V_j ^{(1+2\hat
\tau+\#E)/(2+\#E)}\biggr)&&\text{ if }\frac{\hat q}{2(\hat
q+2+\#E)}\leq \hat \tau<\frac12\,.
\end{aligned}
\end{equation}

Each exceptional term in $\Ecal(U,V;z)$ in~\eqref{Ecal-def}
contributes at most $\prod_{j\in Q}V_j ^{\frac12+\hat \tau}$ and is
absorbed by the error term in~\eqref{Er-Ene}. We are left with the
term corresponding to the constant function. See \eqref{eta-b} in
Lemma~\ref{lem-eta-est} for its simple form.

Thus we have obtained the following asymptotic result:
\begin{thm}\label{thm-QEa}
Let $\Gm$ be an irreducible lattice in $\PSL_2(\RR)$ with $d\geq2$.
Suppose that the quantity $\hat\tau$ in~\eqref{tau-hat} satisfies
$\hat\tau(\Gm)<\frac12$. We partition the set $\{1,\ldots,d\}$ into
two disjoint non-empty subsets $Q$ and~$E$. For each $j\in E$ we fix
a bounded interval $[U_,V_j]\subset [0,\infty)$. For each $j\in Q$,
let $V_j\rightarrow\infty$, under the assumption that
$\min_{j\in Q} V_j^{\hat q} = \prod_{j\in Q}V_j$ for some fixed real
number $\hat q\geq \#Q$. Also, choose $U_j=0$ or $U_j=\frac12V_j$ for
$j\in Q$.

Let $z\in \uhp$. The number $\cnt(U,V;z)$ in~\eqref{cntdef} satisfies
\begin{equation}
\begin{aligned}
\cnt(U,V&;z) \= \frac{(4\pi)^d}{\vol(\Gm\backslash\uhp^d)}
\prod_{j=1}^d
(V_j-U_j) \\
&\hbox{} + \begin{cases}
\oh_{\Gm,E}\biggl( n(z)\, \prod_{j\in Q} V_j ^{(\hat q+1+\#E)/(\hat
q+2+\#E)} \biggr)&\text{ if }\hat \tau \leq \frac {\hat q}{2(\hat
q+2+\#E)}\,,\\
\oh_{\Gm,E} \biggl( n(z)\, \prod_{j\in Q} V_j ^{(1+2\hat
\tau+\#E)/(2+\#E)}\biggr)&\text{ if }\frac{\hat q}{2(\hat
q+2+\#E)}\leq \hat \tau<\frac12\,.
\end{cases}
\end{aligned}
\end{equation}

The implicit constants in the estimates depend on the discrete
group~$\Gm$, on the partition $\{1,\ldots,d\}=Q\sqcup E$ and on the
choice of the intervals $[U_j,V_j)$ for $j\in E$.
\end{thm}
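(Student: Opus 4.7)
The plan is to assemble the three error estimates already prepared, then optimize the two auxiliary parameters $\th$ and $Y_E$ in terms of the $V_j$ with $j\in Q$, and finally verify that the totally exceptional contributions are absorbed so that only the constant-function term survives explicitly.

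First, I would fix smooth test functions $k_j$ as in \eqref{k-prop} and form the kernel $K(z,z)$ of \eqref{Kdef1}. Using the pointwise spectral expansion (Theorem \ref{thm-pspe}), I decompose
\[
K(z,z) \= K_\expl(z,z) \,+\, \sum_{n\in\NN^d} K_n(z,z).
\]
Choosing $k_j^{\pm}$ so that $\prod_j k_j^- \leq \mathbf 1_{\prod_j [U_j,V_j)} \leq \prod_j k_j^+$ sandwiches $\cnt(U,V;z)$ between $K^\pm(z,z)$, as in \S\ref{sect-shsm}. Combining Lemma \ref{lem-KE} (for $K_\expl - \Ecal$) with Lemma \ref{lem-sKne} (for the spectral tail), one obtains
\[
\cnt(U,V;z) - \Ecal(U,V;z) \;\ll_{\Gm,E,c}\; n(z)\bigl(\vmin^{\th-1}+Y_E\bigr)\prod_{j\in Q}V_j \;+\; n(z)\,Y_E^{-\#E/2}\prod_{j\in Q}V_j^{\max(1-\th/2,\,1/2+\hat\tau,\,1/2+c)}.
\]

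Next, set $Y_E = \vmin^{-\eta}$ and use $x=\log\vmin$, so $\prod_{j\in Q}V_j=e^{\hat q x}$. Minimizing the logarithm of the bound over $(\eta,\th)\in[0,1]^2$ reduces to minimizing the maximum of four linear functions: $\th-1+\hat q$, $\hat q-\eta$, $\tfrac e2\eta-\tfrac{\hat q\th}2+\hat q$, and $\tfrac e2\eta+\hat q(\tfrac12+\max(c,\hat\tau))$. I would partition the unit square by the lines $\eta+\th=1$ and $\th=1-2\max(c,\hat\tau)$, eliminate the regions where the maximum is monotone in one variable, and reduce to a one-dimensional problem along the broken line $\th=\min(1-2\max(c,\hat\tau),\,1-\eta)$. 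Equating the two competing slopes gives a transition at $\hat\tau=\hat q/(2(\hat q+\#E+2))$, producing the two claimed optima $(\eta,\th)=(\hat q/(\hat q+\#E+2),(\#E+2)/(\hat q+\#E+2))$ and $(\eta,\th)=((1-2\hat\tau)\hat q/(\#E+2),1-2\hat\tau)$. When $\hat\tau=0$ one picks $c<\#Q/(2(d+2))$ (depending only on $\Gm$) so that the threshold inequality $c<\hat q/(2(\hat q+\#E+2))$ holds and the first case applies.

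Finally, each totally exceptional term in $\Ecal(U,V;z)$ from~\eqref{Ecal-def} is of size $\oh(n(z)\prod_{j\in Q}V_j^{1/2+\hat\tau})$, which in both regimes is dominated by the error term just obtained; so only the $\ell=0$ contribution remains explicit, and by \eqref{eta-b} evaluated at $\tau=1/2$ it equals $\vol(\Gm\backslash\uhp^d)^{-1}(4\pi)^d\prod_j(V_j-U_j)$. The main obstacle, as I see it, is the optimization bookkeeping: one must check that the chosen $(\eta,\th)$ lies in the interior of the admissible region so that the parameter constraints of \S\ref{sect-ap} (in particular $Y_E\downarrow 0$, $Y_j=V_j^\th$ satisfying~\eqref{Ycond}) are met, and the hypothesis $\hat\tau<\tfrac12$ is precisely what prevents the optimum from drifting to the degenerate corner $(\eta,\th)=(0,1)$ where the error would swallow the main term.
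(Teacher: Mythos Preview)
Your proposal is correct and follows essentially the same route as the paper: you combine Lemmas~\ref{lem-KE} and~\ref{lem-sKne} via the sandwich in~\S\ref{sect-shsm}, set $Y_E=\vmin^{-\eta}$, reduce the min--max of the four linear functions to a one-dimensional problem along the broken line $\th=\min(1-2\max(c,\hat\tau),1-\eta)$, locate the transition at $\hat\tau=\hat q/(2(\hat q+\#E+2))$, handle $\hat\tau=0$ by choosing $c<\#Q/(2(d+2))$, and then absorb the totally exceptional terms into the error. Your remark that $\hat\tau<\tfrac12$ keeps the optimum away from the degenerate corner $(0,1)$ matches the paper's observation exactly.
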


We note that the presence of totally exceptional eigenvalues for~$\Gm$
has no explicit influence on this asymptotic formula. The size of he
spectral gap does influence the quality of the error term only if
it is larger than $\frac{\hat q}{2(\hat q+2+\#E)}$.

Like we did in the previous subsection we take all $V_j$ with $j\in Q$
equal to $V$, and all $U_j$ for $j\in Q$ equal to $0$. With the
relation in~\eqref{u-d} between
$u\bigl( (\gm z)_j,z_j\bigr) \in [U_j,V_j)$ and the hyperbolic
distance $\dist\bigl( (\gm z)_j,z_j\bigr) \in [A_j,B_j)$, the main
term takes the form
\[ \frac{4^{\#E}\pi^d}{\vol(\Gm\backslash\uhp^d)} \, e^{dT} \,
\prod_{j\in E} \frac{ e^{B_j}+e^{-B_j}-e^{A_j}-e^{-A_j}}4\,,\]
in the notations of Theorem~\ref{thm-QEai}, which leads to the main
term in the asymptotic formula in that theorem. For the error terms
we use that for equal $V_j$ for $j\in Q$, the parameter $\hat q$ is
equal to~$\#Q$.

\section{Estimates of Selberg transforms}\label{sect-eSt}
Here we collect and prove the estimates of Selberg transforms that we
have used. This can be done factor by factor. So in this section we
work on~$\uhp$, we do not use an index~$j$ and we denote real numbers
by $U $ and~$V$.
\subsection{Integral representations}\label{sect-gc}
The results we need are given in Lemmas~\ref{lem-eta-est},
\ref{lem-se1} and~\ref{lem-se2}. To derive these lemmas we start with
an arbitrary measurable compactly supported function $k$ on
$[0,\infty)$ with values in $[0,1]$. By the definitions
in~\S\ref{sect-St} we have the following integral representation for
the Selberg transform~$h$ of~$k$:
\begin{align}
\nonumber
h(\tau) &\= 2\int_{r=-\infty}^\infty e^{r\tau} \int_{u=\sinh^2
r/2}^\infty k(u) \, \frac{du}{\sqrt{u-\sinh^2r/2}}\, dr\\
\label{h-ru}
&\= 4\int_0^\infty \cosh r\tau\; \int_{u=\sinh^2r/2}^\infty k(u)
\,\frac{du}{\sqrt{u-\sinh^2r/2}}\, dr\,.
\end{align}
The inner integral gives a non-negative compactly supported function.
So $h(0) \geq |h(it)|$ for $t\in \RR$, which gives
\begin{equation}
\label{hr}
|h(it)|\;\leq \; h(0)\quad(t\in \RR)\,,\qquad \tau\mapsto
h(\tau)\text{ is increasing on
}\bigl[0,\txtfrac12\bigr]\,.\end{equation}
(For the latter we use that $\tau\mapsto \cosh r\tau$ is increasing.)

We interchange the order in the double integration, and obtain
\begin{equation}\label{h-ur}
h(\tau)
\= 2\int_{u=0}^\infty k(u) \int_{r\in\RR\,,\; \sinh^2 r/2\leq u}
e^{r\tau}\, \bigl( u - \sinh^2 r/2\bigr)^{-1/2}\, dr\, du\,.
\end{equation}
Following the approach in \cite{Pa75}, we define $x=x(u)\geq 1$ by
$2+4u=x+x^{-1}$, hence $x(u)=1+2u+2\sqrt{u+u^2}$. The condition
$\sinh^2\frac r2\leq u$ amounts to $|r|\leq \log x$. With the
substitution $e^r = x^{-1}\bigl( 1+\nobreak y(x^2-\nobreak1)\bigr)$
the inner integral equals
\begin{align}
\nonumber
&\= \int_{-\log x}^{\log x} e^{r\tau} \bigl(
(x+x^{-1}-e^r-e^{-r})/4\bigr)^{-1/2}\, dr
\\
\label{i-int}
&\= 2 \,x^{\frac12-\tau} \int_0^1 \bigl( 1+y(x^2-1)
\bigr)^{\tau-\frac12} \,\bigl( y(1-y)
\bigr)^{-\frac12}\, dy
\\
\label{i-int1}
&\= 2 \pi \, x^{\frac12-\tau}\, \hypgeom\bigl(
\txtfrac12-\tau,\txtfrac12;1;1-x^2\bigr)\,,
\end{align}
by the standard integral representation of the hypergeometric series
in \cite{EMOT}, \S2.1.3, (10). Let us work under the standing
assumption that $0\leq \re\tau\leq\frac12$.

From~\eqref{i-int} we obtain the bound
 $\oh\bigl( x^{\re\tau-\frac12}\bigr)$ for the inner integral
 in~\eqref{h-ur}. Hence if $\supp(k) \subset [A,B]$, then
\begin{equation}\label{hest1}
\begin{aligned}
h(\tau)&\;\ll\; \int_{x(A)}^{x(B)} x^{2\re\tau-\frac12}\, dx \;\ll\;
\frac{x(B)^{\re\tau+\frac12} -
x(A)^{\re\tau+\frac12}}{\re\tau+\frac12}\\
&\= (B-A) \,
\bigl(1+2T+2\sqrt{T^2+T}\bigr)^{\re\tau-\frac12}\qquad\text{with
}A\leq T\leq B\\
&\;\ll\; \begin{cases}
(B-A)\bigl(1+A^{\re\tau-\frac12}\bigr)
&\text{ if }A>0\,,\\
1+B^{\re\tau+\frac12}&\text{ if }A=0\text{ and }B>0\,.
\end{cases}
\end{aligned}
\end{equation}\smallskip

Proceeding with~\eqref{i-int1} we get
\begin{align}\label{h1}
h(t) &\= \pi \int_1^\infty k\bigl((x+x^{-1}-2)/4\bigr) \,
x^{-\frac32-\tau} \,(x^2-1)\\
\nonumber
&\qquad\hbox{} \cdot
 \hypgeom\bigl( \txtfrac12-\tau,\txtfrac12;1;1-x^2\bigr)\, dx
\displaybreak[0]
\\
\label{h2}
&\= \pi \int_1^2 k\bigl((x+x^{-1}-2)/4\bigr) \, x^{-\frac32-\tau}
\,(x^2-1)\\
\nonumber
&\qquad\hbox{} \cdot \hypgeom\bigl(
\txtfrac12-\tau,\txtfrac12;1;1-x^2\bigr)\, dx\\
\nonumber
&\quad\hbox{}
+ \sqrt\pi \int_2^\infty k\bigl((x+x^{-1}-2)/4\bigr)\, x^{-\frac32}
\,(x^2-1)^{\frac12} \\
\nonumber
&\qquad\qquad\hbox{} \cdot
\sum_\pm \frac{\Gf(\pm \tau)}{\Gf\bigl( \frac12\pm \tau\bigr)}\,
x^{\pm \tau} \, \hypgeom\bigl(\txtfrac12,\txtfrac12;1\mp
\tau;\txtfrac1{1-x^2}\bigr)\, dx\,.
\end{align}
(For~\eqref{h2} we have used a Kummer relation, \cite{EMOT}, \S2.9,
(34), (10), (13).)

Let us use \eqref{h2} in the case when $\supp(k)\subset[A,B]$. For the
part of $[A,B]$ corresponding to a subinterval $x\in[1,2]$ we have
the estimates in~\eqref{hest1}. We now estimate the integral over an
interval $[A,B]$ with $x(A)\geq 2$ by
\[ \sum_\pm \biggl|\frac{\Gf(\pm\tau)}{\Gf(\frac12\pm\tau)}\biggr|
\int_{x(A)}^{x(B)} x^{-\frac12\pm \re\tau}
\bigl|\hypgeom\bigl(\txtfrac12,\txtfrac12;1\mp
\tau;\txtfrac1{1-x^2}\bigr)\bigr|\,dx\,.\]
Uniformly for $\tau$ in a compact set $T$ we find an estimate by
\begin{equation}\label{hest2}
\sum_\pm \biggl|\frac{\Gf(\pm\tau)}{\Gf(\frac12\pm\tau)}\biggr| \,
(B-A)\, A^{\pm\re\tau-\frac12}\,.
\end{equation}
Note that this estimate is bad if $\tau$ is near~$0$. To handle a
neighborhood of $\tau=0$, we consider the second integral
in~\eqref{h2} as a holomorphic function of the complex variable
$\tau$. It is holomorphic at~$\tau=0$, since the contributions of
$\Gf(\tau)$ and $\Gf(-\tau)$ cancel each other. If $\supp(k)$ is
contained in $[A,B]$ with $x(A)\geq 2$, we find for $|\tau|=c$ with a
small $c>0$ by the reasoning that led to the estimate
in~\eqref{hest2} a bound
\[ \oh_c\bigl( (B-A) A^{c-\frac12} \bigr)\,.\]
By holomorphy, this bound extends to $|\tau|\leq c$. Thus, if
$\supp(k)\subset [A,B]$ with $x(A)\geq 2$, and if $|\tau|\leq c$,
then
\[ h(\tau) \;\ll_c\; (B-A)\, A^{c-\frac12}\,.\]

For $\tau=it\in i\RR$ with $x\geq 2$ we use that
\[ \bigl|\hypgeom\bigl(\txtfrac12,\txtfrac12;1\pm
it;\txtfrac1{1-x^2}\bigr)\bigr| \;\leq\;
\hypgeom\bigl(\txtfrac12,\txtfrac12;1;\txtfrac1{1-x^2}\bigr)\,.\]
By Stirling's formula we get a bound
$\bigl(1+\nobreak|t|\bigr)^{-1/2} \,(B-\nobreak A)\, A^{-\frac12}$
uniformly on $|t|\geq c$.

In Table~\ref{tab-he} we have combined these results.
\begin{table}
\[ \renewcommand\arraystretch{1.3}
\setlength\arraycolsep{0.1cm}
\begin{array}{|lc|c|c|c|}\hline
&& \text{i) }c\leq \tau\leq\frac12& \text{ii) }|\tau|\leq c&
\text{iii) } \tau=it \\
&&&& \in i\RR\setminus(-c,c)\\ \hline
\text{a)}&1\leq A <B&(B-A)A^{\tau-\frac12}&
(B-A)A^{c-\frac12}& |t|^{-1/2}(B-A)A^{-\frac12} \\ \hline
\text{b)}&0<A<B\leq 1& \multicolumn{3}{|c|}{(B-A)A^{-\frac12}}\\
\hline
\text{c)}&0=A<B\leq 1& \multicolumn{3}{|c|}{B^\frac12}\\\hline\hline
\text{d)}&0<A<B&(B-A)&
(B-A)& (B-A)A^{-\frac12}
\\
&&\hbox{\ } \cdot \max(A^{\tau-\frac12},A^{-\frac12})
&\hbox{\ } \cdot\max(A^{c-\frac12},A^{-\frac12})
&\hbox{\ } \cdot\max(1,|t|^{-\frac12}B) \\ \hline
\text{e)}&0=A<B&1+B^{\tau+\frac12}& 1+B^{c+\frac12}
& 1+|t|^{-\frac12}B
\\ \hline
\end{array}
\]
\caption{Bounds for $h(\tau)$ under the assumption that
$\supp(k)\subset [A,B]$. These bounds depend implicitly on
$c\in \bigl(0,\frac12\bigr)$.}\label{tab-he}
\end{table}
We will use these estimates repeatedly in the proofs of the following
lemmas. In some cases, we shall return to the integral
representations.

\subsection{Lemmas for Selberg transforms} First we consider the
Selberg transform $\eta(U,V;\tau)$ in~\eqref{eta-def} of the
characteristic function~$\ch$ of a bounded interval
$[U,V)\subset [0,\infty)$.
\begin{lem}\label{lem-eta-est}
\begin{align}
\label{eta-a}
\tau\mapsto &\eta(U,V;\tau)\text{ is positive and increasing on
$\bigl[0,\txtfrac12\bigr]$}\,,\\
\label{eta-b}
\eta\bigl(U,V;\txtfrac12\bigr) &\= 4\pi(V-U)\,,\\
\label{eta-c}
\bigl| \eta(U,V;it) \bigr| &\;\leq\; \eta(U,V;0)\qquad(t\in\RR)\,.
\end{align}
Moreover, if $c\in \bigl(0,\frac12\bigr)$ is fixed, then we have
uniformly for $c\leq\tau\leq \frac12$ the estimate
\begin{equation}
\label{eta-as}
\eta(U,V;\tau) \= \sqrt\pi
\frac{2^{2\tau+1}\Gf(\tau)}{\Gf(\frac32+\tau)} \,\bigl(
V^{\tau+\frac12} - U^{\tau+\frac12}\bigr) + \oh_c \bigl(
V^{-\tau+\frac12}\bigr)\qquad (V\rightarrow\infty)\,.
\end{equation}
\end{lem}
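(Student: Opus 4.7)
The plan is to recognize $\eta(U,V;\tau)$ as the Selberg transform of $\ch_{[U,V)}$ and read off all four assertions from the integral representations collected in Section~\ref{sect-gc}.

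For \eqref{eta-a} and \eqref{eta-c} I would apply \eqref{h-ru}: since $\ch_{[U,V)}\geq 0$, the inner integral there is non-negative, and $\cosh(r\tau)$ is positive, strictly increasing on $\bigl[0,\tfrac12\bigr]$, and satisfies $|\cosh(rit)|=|\cos(rt)|\leq 1=\cosh(0)$ for $t\in\RR$. These are exactly the observations \eqref{hr} specialized to $k=\ch_{[U,V)}$. The identity \eqref{eta-b} is the specialization of \eqref{h1/2}: $\eta(U,V;\tfrac12)=4\pi\int_U^V du=4\pi(V-U)$.

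The substantive point is \eqref{eta-as}. I would start from \eqref{h2} and use the substitution $x=x(u)=1+2u+2\sqrt{u+u^2}$, noting $x(V)=4V+\oh(1)$ as $V\to\infty$. The domain $\{U\leq u<V\}$ corresponds to $x\in[x(U),x(V))$, and its intersection with $[1,2]$ is compact with bounded integrand uniformly for $\tau\in[c,\tfrac12]$, contributing $\oh_c(1)$. On the remaining range $x\geq\max(2,x(U))$ I would invoke the second integral in \eqref{h2} together with the expansion, uniform for $\tau\in[c,\tfrac12]$ and $x\geq 2$,
\[
x^{-3/2}(x^2-1)^{1/2}\,\hypgeom\bigl(\txtfrac12,\txtfrac12;1\mp\tau;\txtfrac1{1-x^2}\bigr) \= x^{-1/2}\bigl(1+\oh(x^{-2})\bigr),
\]
which reduces each of the $\pm\tau$ branches to an explicit power integral. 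The $-\tau$ branch then contributes $\oh_c(V^{1/2-\tau})$ overall; here one has to absorb the $1/(\tfrac12-\tau)$ arising from the $x$-integration into the Gamma factor using $\Gf(\tfrac32-\tau)=(\tfrac12-\tau)\Gf(\tfrac12-\tau)$, so that the estimate is uniform across $[c,\tfrac12]$, including the endpoint $\tau=\tfrac12$.

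For the $+\tau$ branch I would next replace $x(V)^{\tau+1/2}$ by $2^{2\tau+1}V^{\tau+1/2}$ and $x(U)^{\tau+1/2}$ by $2^{2\tau+1}U^{\tau+1/2}$; the replacement errors are both $\oh_c(V^{1/2-\tau})$ (trivial when $U=0$; for $U\geq 1$ one has an error $\oh(U^{\tau-1/2})\leq \oh(1)$ since $\tau\leq\tfrac12$). Applying $\Gf(\tfrac32+\tau)=(\tfrac12+\tau)\Gf(\tfrac12+\tau)$ collapses the coefficient $\sqrt\pi\,\Gf(\tau)\cdot 2^{2\tau+1}/[(\tfrac12+\tau)\Gf(\tfrac12+\tau)]$ to $\sqrt\pi\,2^{2\tau+1}\Gf(\tau)/\Gf(\tfrac32+\tau)$, yielding \eqref{eta-as}. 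The main obstacle is bookkeeping: one has to keep every error uniform in $\tau\in[c,\tfrac12]$ while checking at $\tau=\tfrac12$ that the $+\tau$ main term reproduces exactly $4\pi(V-U)$, as demanded by \eqref{eta-b}.
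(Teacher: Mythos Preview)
Your proposal is correct and follows essentially the same route as the paper: \eqref{eta-a}--\eqref{eta-c} are read off from \eqref{hr} and \eqref{h1/2} (the paper cites \eqref{i-int} at $\tau=\tfrac12$, which amounts to the same thing), and \eqref{eta-as} is obtained by starting from \eqref{h2}, bounding the $x\in[1,2]$ piece by $\oh_c(1)$, expanding $\hypgeom(\tfrac12,\tfrac12;1\mp\tau;(1-x^2)^{-1})=1+\oh(x^{-2})$ on $x\geq 2$, integrating the resulting powers $x^{\pm\tau-1/2}$, and then replacing $x(V),x(U)$ by $4V,4U$. Your extra remarks on absorbing the $1/(\tfrac12-\tau)$ into $\Gf(\tfrac32-\tau)$ and on the coefficient collapse via $\Gf(\tfrac32+\tau)=(\tfrac12+\tau)\Gf(\tfrac12+\tau)$ are exactly the bookkeeping the paper leaves implicit.
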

If the difference $V-U$ is small, then the $\oh$-term may be larger
than the explicit term in~\eqref{eta-as}.
\begin{proof}We apply the computations in \S\ref{sect-gc} to the
characteristic function~$\ch$ of $[U,V)$. In~\eqref{hr} we find
\eqref{eta-a} and~\eqref{eta-c}. Taking $\tau=\frac12$
in~\eqref{i-int} gives~\eqref{eta-b}.

For~\eqref{eta-as} we need an asymptotic formula, not an estimate. We
use~\eqref{h2}. If $x(U)\geq 2$, we need only the integral over
$[2,\infty)$. Writing $
\hypgeom\bigl( \txtfrac12-\tau,\txtfrac12;1;1-x^2\bigr) = 1+\oh(x^{-2})$
as $x\rightarrow\infty$, we get, uniformly for
$\tau\in\bigl[c,\frac12\bigr]$:
\begin{align*}
\sum_\pm \pi& \frac{\Gf(\pm\tau)}{\Gf(\frac12\pm\tau)}
\int_{x(U)}^{x(V)} x^{\pm\tau-\frac12}
\bigl(1+\oh_c(x^{-2})\bigr)\,dx\\
&\=\sum_\pm\pi \frac{\Gf(\pm\tau)}{\Gf(\frac32\pm\tau)} \bigl(
x(V)^{-\frac12\pm\tau}-x(U)^{-\frac12\pm\tau}
+\oh_c\bigl(x(U)^{\pm\tau-\frac32} \bigr) \bigr)\,.
\end{align*}
For $T\geq 1$ we have $x(T) = 4T+\oh(1)$. So the main term with
$\pm=+$ gives the explicit term in~\eqref{eta-as}. The other terms
give
$\oh(U^{\tau-\frac32})+\oh(V^{\frac12-\tau})=\oh\bigl(V^{\frac12-\tau}\bigr)$.

If $x(U)\leq 2$ we get from $x\in[2,\infty)$ the contribution
\[ \pi \frac{\Gf(+\tau)}{\Gf(\frac32+\tau)} \bigl( V^{\tau+\frac12} -
\oh(1)
\bigr) + \oh(V^{\frac12-\tau})\,.\]
We add to it $\oh\left( u(2)-U \right)= \oh(1)$ from~i)d) in
Table~\ref{tab-he}, and obtain \eqref{eta-as} in this case as well.
\end{proof}

Next we consider functions approximating the characteristic function
of $[U,V)$, satisfying the following conditions.
\begin{equation}\label{kj-cond}
\begin{aligned}
&k\;\in \;C_c^\infty[0,\infty)\,,\qquad 0\leq k \leq 1\,,\\
&\exists_{Y>0}\text{ such that $2Y\leq U$ if $U>0$, $2Y\leq V-U$,
and}\\
&k\=1\text{ on } \begin{cases}
[U+Y,V-Y]&\text{ if }U>0\,,\\
[0,V-Y]&\text{ if }U=0\,,\end{cases}\\
&k\=0\text{ on }\begin{cases}
[0,U-Y]\cup[V+Y,\infty)&\text{ if }U>0\,,\\
[V+Y,\infty)&\text{ if }U=0\,.
\end{cases}
\end{aligned}
\end{equation}

\begin{lem}\label{lem-se1}The Selberg transform $h$ in~\eqref{Se} of a
function $k\in C^\infty[0,\infty)$ satisfying the
conditions~\eqref{kj-cond} has the following properties:
\begin{enumerate}
\item[a)] $4\pi(V-U-2Y) \leq h\bigl(\frac12\bigr)\leq 4\pi(V-U+2Y)$.
\item[b)] If $V<1$, then
\[ \biggl| h(\tau)-h\biggl(\frac12\biggr)\biggr| \;\ll\; V^{3/2}\,
\biggl|\txtfrac12-\tau\biggr|\]
for all $\tau$ with $0\leq \re \tau\leq \frac12$.
\item[c)] For each $c\in \bigl(0,\frac12\bigr)$ the difference with
the Selberg transform $\eta(U,V;\tau)$ of the characteristic function
of $[U,V)$ satisfies the estimate
\[ \eta(U,V;\tau)-h(\tau) \ll_c \begin{cases}Y \, \max\bigl(
V^{\tau-\frac12},U^{-\frac12}\bigr)&\text{ if }U>0\,,\\
Y\max\bigl(V^{-\frac12},V^{\tau-\frac12}\bigr)&\text{ if }U=0\,,
\end{cases}
\]
uniformly in~$\tau\in \bigl[c,\frac12\bigr]$.
\end{enumerate}
\end{lem}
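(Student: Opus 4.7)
All three parts rely on the integral representations for $h(\tau)$ collected in \S\ref{sect-gc}.

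For part (a), I would simply apply \eqref{h1/2}, which gives $h(1/2) = 4\pi \int_0^\infty k(u)\,du$. The conditions \eqref{kj-cond} on $k$ force it to equal $1$ on an interval of length at least $V-U-2Y$, to equal $0$ outside an interval of length at most $V-U+2Y$, and to satisfy $0 \leq k \leq 1$ on at most two transition intervals of total length at most $4Y$ (only one interval of length $2Y$ when $U=0$). This immediately yields $V-U-2Y \leq \int_0^\infty k\,du \leq V-U+2Y$, proving (a).

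For part (b), I would start from \eqref{h-alt} and write
\[
h(\tau) - h(1/2) \= \int_\uhp k(u(z,i)) \bigl(y^{1/2-\tau} - 1\bigr)\, d\mu(z).
\]
The hypotheses $2Y \leq V-U$ and (when $U>0$) $2Y \leq U$ force $V+Y \leq 3V/2 < 3/2$, so the integrand is supported in the bounded region $\{u(z,i) \leq 3V/2\}$. There $|y-1| \ll \sqrt{u(z,i)}$ and hence $|\log y| \ll \sqrt{u(z,i)}$. Writing $y^{1/2-\tau}-1 = (1/2-\tau) \int_0^1 y^{t(1/2-\tau)} \log y\, dt$ and using that $y$ is bounded on the support, one obtains $|y^{1/2-\tau}-1| \ll |1/2-\tau|\cdot\sqrt{u(z,i)}$ uniformly when $|1/2-\tau|\sqrt{V} \leq 1$. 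Passing to polar coordinates ($d\mu = 4\,du\,d\phi$) yields $|h(\tau)-h(1/2)| \ll |1/2-\tau|\int_0^{3V/2} \sqrt u\,du \ll V^{3/2}|1/2-\tau|$. When $|1/2-\tau| > V^{-1/2}$, the trivial bound $|h(\tau)-h(1/2)| \ll \mathrm{vol}(\supp\,k(u(\cdot,i))) \ll V$ is already $\leq V^{3/2}|1/2-\tau|$.

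For part (c), I would write analogously
\[
\eta(U,V;\tau) - h(\tau) \= \int_\uhp \bigl(\chi_{[U,V)} - k\bigr)(u(z,i))\, y^{1/2-\tau}\, d\mu(z).
\]
For $\tau \in [c,1/2]$ real, $y^{1/2-\tau} > 0$, and $|\chi_{[U,V)}-k|$ is bounded by the characteristic function of the symmetric difference, which sits in $[U-Y,U+Y] \cup [V-Y,V+Y]$ (only the $V$-piece if $U=0$). Hence
\[
|\eta(U,V;\tau) - h(\tau)| \;\leq\; \eta(U-Y,U+Y;\tau) + \eta(V-Y,V+Y;\tau),
\]
with the first term absent when $U=0$. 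I would then apply Table~\ref{tab-he} to each narrow-annulus integral, case-splitting on whether the relevant transition interval lies in $[1,\infty)$, in $[0,1]$, or straddles $u=1$. Using the hypotheses $U-Y \geq U/2$ and $V-Y \geq V/2$ from \eqref{kj-cond}, each contribution is controlled by $Y$ times $U^{\tau-1/2}$ or $U^{-1/2}$ (respectively $V^{\tau-1/2}$ or $V^{-1/2}$), which, combined with $U \leq V$ and $V^{-1/2} \leq U^{-1/2}$, absorbs into the asserted bound $Y\max(V^{\tau-1/2}, U^{-1/2})$ (and $Y\max(V^{-1/2}, V^{\tau-1/2})$ when $U=0$). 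The main obstacle is this case analysis: the transitions where $1$ lies inside one of the narrow intervals $[U-Y, U+Y]$ or $[V-Y,V+Y]$ force one to split the integral at $u=1$ and combine two different regimes from Table~\ref{tab-he}, and the implicit constants must be kept uniform in $\tau\in[c,1/2]$ (hence the dependence on $c$).
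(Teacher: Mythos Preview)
Your proposal is correct and follows essentially the same route as the paper. For a) you both invoke \eqref{h1/2}; for b) you both start from \eqref{h-alt} and bound $y^{1/2-\tau}-1$ using $|y-1|\ll\sqrt u$ on the support (the paper phrases this via the mean value theorem, you via the integral $\int_0^1 y^{t(1/2-\tau)}\log y\,dt$, which is arguably cleaner for complex exponents); for c) you both bound $|\chi_{[U,V)}-k|$ by the indicator of the transition intervals and appeal to the estimates behind Table~\ref{tab-he}.

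Two small remarks. In b), your case split on $|1/2-\tau|\sqrt V\le 1$ is unnecessary: since $|y^{t(1/2-\tau)}|=y^{t\,\re(1/2-\tau)}$ is uniformly bounded on the support (with $\re(1/2-\tau)\in[0,1/2]$ and $y$ near $1$), the bound $|y^{1/2-\tau}-1|\ll|1/2-\tau|\sqrt u$ already holds for all $\tau$ in the strip. In c), the case analysis you flag as the ``main obstacle'' is exactly what row d) of Table~\ref{tab-he} packages; the paper simply cites that row directly rather than re-splitting at $u=1$.
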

\begin{proof}
Part~a) follows by a comparison of $k$ with the characteristic
functions of the intervals $[U+\nobreak Y,V-\nobreak Y) $ and
$[U,V)$, and an application of \eqref{eta-b} in
Lemma~\ref{lem-eta-est}.

For~b)
we use \eqref{h-alt}, and note that
$y^s-1=s (y-1) \left(1+\x (y-1)\right)^{s-1}$ for some
$\x=\x_{s,y}\in
(0,1)$ to obtain
\begin{align*} \bigl|h(\tau) - h\bigl(\frac12\bigr) \bigr|
&\leq \bigl|\frac12-\tau\bigr| \,\int_\uhp k\bigl(u(z,i)\bigr)\,
|y-1|\; \bigl(1+\x (y-1)\bigr)^{-\frac12-\re \tau} \, d\mu(z)\,.
\end{align*}
For small~$V$ the values of~$y$ that occur in the integral are between
$1-\oh\bigl(\!\sqrt V\bigr)$ and $1+\oh\bigl(\!\sqrt V\bigr)$. Thus
the integral is bounded by
$\oh\bigl(\!\sqrt V\bigr)\, \int_\uhp k\bigl(u(z,i) \bigr)\, d\mu(z)$,
which gives~b).

For~c) we apply the estimate i)d) in Table~\ref{tab-he} to a function
with support in the union of the intervals
$[U-\nobreak Y,U+\nobreak Y]$ and $[V-\nobreak Y,V+\nobreak Y]$. For
$U>0$ we find:
\[ Y \, \max(U^{\tau-\frac12},U^{-\frac12}) + Y\, \max\bigl(
(V-Y)^{\tau-\frac12},(V-Y)^{-\frac12}\bigr)
\;\ll\; Y \, \max(V^{\tau-\frac12},U^{-\frac12})\,. \]
If $U=0$ we have only the contribution of $[V-\nobreak Y,V+Y]$.
\end{proof}

\begin{lem}\label{lem-se2}The Selberg transform $h$ of a function
$k\in C^\infty[0,\infty)$ satisfying the conditions~\eqref{kj-cond}
has the following properties:
\begin{enumerate}
\item[a)] $h(\tau) = \eta(U,V;\tau)+Y$ for
$\tau\in \bigl[0,\frac12\bigr]$.
\item[b)] Let $c\in \bigl(0,\frac12\bigr)$. Then we have, uniformly
for $\tau\in i[-c,c]\cup(0,c]$:
\[ h(\tau) \;\ll_c\;
\begin{cases}
(V-U)U^{c-\frac12}&\text{ if } 1\leq U\leq V\,,\\
V^{c+\frac12}&\text{ if }U=0,\; V\geq 1\,,
\end{cases}
\]
and, without dependence on~$c$:
\[ h(\tau) \;\ll\; V\,.\]
\item[c)] Let $c\in \bigl(0,\frac12\bigr)$ and take $l\in \NN$. Then
we have for each $\dt>0$, uniformly for $t\in \RR\setminus(-c,c)$:
\[ h(it) \;\ll_{c,l}\;
\begin{cases}
Y\, \|k^{(l)}\|_\infty \, \max\bigl( V^{l-\frac12}, U^{-\frac12}\bigr)
\, |t|^{-l-\frac12}
&\text{ if }\dt\leq U<V\,,\\
Y\, \|k^{(l)}\|_\infty \, \max\bigl(V^{-\frac12}, V^{l-\frac12}\bigr)
\, |t|^{-l-\frac12}
&\text{ if }U=0,\,V>\dt\,.
\end{cases}
\]
\end{enumerate}
\end{lem}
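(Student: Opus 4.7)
The lemma has three parts, which I would handle in turn by combining the integral representations from \S\ref{sect-gc} with the estimates summarized in Table~\ref{tab-he}.

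For part~(a), the hypothesis \eqref{kj-cond} gives $|k - \ch_{[U,V)}| \leq \onebold_{[U-Y,U+Y] \cup [V-Y,V+Y]}$ (or just $\onebold_{[V-Y,V+Y]}$ when $U=0$). Since the Selberg kernel $y^{1/2-\tau}$ in \eqref{h-alt} is non-negative for real $\tau\in [0,\tfrac12]$, I would bound
\[
|h(\tau) - \eta(U,V;\tau)| \;\leq\; \eta(U-Y,U+Y;\tau) + \eta(V-Y,V+Y;\tau).
\]
By the monotonicity \eqref{eta-a} and the explicit value \eqref{eta-b}, each right-hand term is at most $8\pi Y$, yielding the asserted $O(Y)$-bound (reading the ``$+Y$'' in the statement as a $+O(Y)$ remainder).

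For part~(b), the bounds follow by direct application of Table~\ref{tab-he} to $k$, whose support lies in $[U-Y,V+Y]$ or $[0,V+Y]$. Condition~\eqref{Ycond} gives $U-Y \asymp U$ and $V+Y \asymp V$, so row~a)ii) yields $h(\tau) \ll (V-U)\,U^{c-1/2}$ when $U\geq 1$, and row~e)ii) yields $h(\tau) \ll V^{c+1/2}$ when $U=0$. The uniform bound $h(\tau) \ll V$ without $c$-dependence follows from the monotonicity of $h$ on $[0,\tfrac12]$ and the inequality $|h(it)|\leq h(0)$, both contained in \eqref{hr}: combined with part~(a), one obtains $|h(\tau)|\leq h(\tfrac12) = 4\pi(V-U) + O(Y) \ll V$.

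Part~(c) is the substantive part and my expected obstacle. I would use the representation \eqref{h2}, splitting $h(it) = I_1 + I_2$ according to $x\in[1,2]$ and $x\in[2,\infty)$, and in each substitute $r = \log x$ so that $x^{\pm it} = e^{\pm irt}$ appears as a clean oscillatory factor. For $I_2$, Stirling's formula yields $|\Gf(\pm it)/\Gf(\tfrac12\pm it)| = O(|t|^{-1/2})$ uniformly for $|t|\geq c$, accounting for the extra half-power. I would then integrate by parts $l$ times in $r$ against $e^{\pm irt}$: boundary terms vanish by compact support of $k$, each step contributes $|t|^{-1}$, and the chain rule applied to $k(u(r))$ with $u=\sinh^2(r/2)$ produces at leading order $k^{(l)}(u)\,(\sinh r/2)^l$, while the $r$-derivatives of the remaining smooth factors $x^{-1/2}(x^2-1)^{1/2}$ and $\hypgeom(\tfrac12,\tfrac12;1\mp it;1/(1-x^2))$ stay $O(1)$ uniformly in $|t|\geq c$ and $x\geq 2$ (since $|1/(1-x^2)|\leq 1/3$ and the parameter $1\mp it$ only improves convergence of the series). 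Reverting to the $u$-variable with $dr = 2\,du/\sinh r$ and integrating over $\supp(k^{(l)}) \subset [U-Y,U+Y]\cup[V-Y,V+Y]$, the resulting integral is dominated by $Y\,\|k^{(l)}\|_\infty \max(V^{l-1/2},U^{l-1/2})$, which combined with the $|t|^{-l-1/2}$ factor gives the claim for $I_2$. The piece $I_1$ on $u\in[0,\tfrac18]$ is treated analogously: either $\dt$ can be chosen smaller than $U$ so that $k$ vanishes on $[1,2]$, or for $U=0$ the extra $|t|^{-1/2}$ comes from the large-parameter asymptotics of $\hypgeom(\tfrac12-it,\tfrac12;1;1-e^{2r})$ for $r\in[0,\log 2]$. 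The main obstacle will be the careful bookkeeping needed to match the $\max(V^{l-1/2},U^{-1/2})$ shape of the stated bound, and in particular the uniform $O(1)$-control (for $|t|\geq c$) of all $r$-derivatives of the hypergeometric factor appearing after integration by parts.
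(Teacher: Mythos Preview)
Your treatment of a) and b) matches the paper's: a) by comparison with $\eta$ via positivity and~\eqref{eta-b}, and b) read off from Table~\ref{tab-he} together with~\eqref{hr}.

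For c) the paper takes a different and considerably cleaner route. Instead of integrating by parts in the oscillatory variable~$r$, it integrates by parts $l$ times in~$u$ at the level of the Abel transform itself,
\[
q(p)\;=\;\int_p^\infty k(u)\,(u-p)^{-1/2}\,du
\;=\;\frac{(-1)^l\sqrt\pi}{\Gf(l+\tfrac12)}\int_p^\infty k^{(l)}(u)\,(u-p)^{l-1/2}\,du,
\]
and then reruns the derivation of~\eqref{h2} with the shifted exponent. The outcome is an analogue of~\eqref{h2} in which $k$ is replaced by $k^{(l)}$ and all hypergeometric parameters are shifted by~$l$. On $x\in[2,\infty)$ the gamma quotient becomes $\Gf(\pm it)/\Gf(l+\tfrac12\pm it)\asymp|t|^{-l-1/2}$, giving the full decay without any oscillatory integration. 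On $x\in[1,2]$ the decay comes from the large-parameter asymptotics $\hypgeom(l+\tfrac12,l+\tfrac12+it;2l+1;1-x^{-2})\ll_{l,\dt}|t|^{-l-1/2}$, valid uniformly for $x\geq 1+\dt$; this is exactly where the hypothesis $U\geq\dt$ (or $V>\dt$ when $U=0$) enters, keeping the support of $k^{(l)}$ away from $u=0$.

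Your scheme has a gap you did not flag. After $l$ integrations by parts in~$r$, the Leibniz rule produces a term $k(u(r))\cdot\tilde g^{(l)}(r)$ in which no derivative falls on~$k$; this term is supported on all of $\supp k$, not on the short intervals $[U\pm Y]$, $[V\pm Y]$, and therefore does not acquire the factor $Y\,\|k^{(l)}\|_\infty$. It contributes roughly $|t|^{-l-1/2}\log(V/U)$, which is not in general dominated by the stated bound without an additional lower bound on $\|k^{(l)}\|_\infty$ that the lemma does not assume. Your remark on $I_1$ is also off: taking $\dt<U$ does not make $k$ vanish on $u\in[0,\tfrac18]$; the point of~$\dt$ is rather that after the paper's $u$-integration by parts only $k^{(l)}$ remains, and \emph{its} support stays away from $u=0$, so the hypergeometric asymptotics apply. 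The paper's device of shifting the integration by parts to the Abel step avoids both difficulties at once, since only $k^{(l)}$ ever appears in the resulting integrals.
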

We note that we have stated what we need, not the best estimate one
might prove by separating more cases. In the proof we will see that
in~c)
we have to avoid intervals with $U\in (0,\dt)$, to be able to apply an
asymptotic estimate for hypergeometric functions.
\begin{proof}
Part~a) is a direct consequence of \eqref{h-alt}, the inequalities
$0\leq k(u) \leq \ch(u)$, where $\ch$ is the characteristic function
of~$[U,V)$, and Lemma~\ref{lem-eta-est}.

The first estimate in b) can be read off from Table~\ref{tab-he}. The
bound $h(\tau)\ll V$ follows from a) and~\eqref{eta-b} in
Lemma~\ref{lem-eta-est}.\smallskip

For d) we modify the discussion in~\S\ref{sect-gc}. By the smoothness
of~$k$ we find in~\eqref{Se}
\begin{equation}
q(p) \= \frac{(-1)^l\sqrt\pi}{\Gf(l+\frac12)} \int_p^\infty k^{(l)}(u)
\,(u-p)^{l-\frac12}\, du
\end{equation}
for each $l\in \NN$. Proceeding as in \eqref{h-ru},
\eqref{h-ur}--\eqref{i-int1}, \eqref{h1}--\eqref{h2}, we obtain
\begin{align}
\nonumber
h(\tau)&\= \frac{\sqrt\pi(-1)^l}{2^{2l}\Gf(l+\frac12)}
\int_{x=1}^\infty k^{(l)}\bigl( (x+x^{-1}-2)/4\bigr) \int_{y=0}^1
x^{-\frac32-\tau-l}\\
&\qquad\hbox{} \cdot
(x^2-1)^{2l+1}\, \bigl(1+y(x^2-1) \bigr)^{\tau-l-\frac12} \, \bigl(
y(1-y)
\bigr)^{l-\frac12}\, dy\, dx
\displaybreak[0]\\
\nonumber
&\= \frac{\pi (-1)^l}{2^{4l}\;l!} \int_1^\infty k^{(l)}\bigl(
(x+x^{-1}-2)/4\bigr)
 \, x^{-\frac32-\tau-l}\, (x^2-1)^{2l+1}\\
 \nonumber
 &\qquad\qquad\hbox{} \cdot
\hypgeom\bigl( l+\txtfrac12-\tau,l+\txtfrac12;2l+1;1-x^2\bigr)\, dx
 \displaybreak[0]\\
 \label{ht}
&\= \frac{\pi (-1)^l}{2^{4l}\;l!} \int_1^2 k^{(l)}\bigl(
(x+x^{-1}-2)/4\bigr)\, x^{-\frac 52-3l-\tau}\\
\nonumber
&\qquad\qquad\hbox{} \cdot
 (x^2-1)^{2l+1}\,
\hypgeom\bigl(l+\txtfrac12+\tau,l+\txtfrac12;2l+1;1-x^{-2}\bigr)\,
dx\\
\nonumber
&\qquad\hbox{}
+ \frac{\sqrt\pi (-1)^l}{2^{2l}}\int_2^\infty k^{(l)}\bigl(
(x+x^{-1}-2)/4\bigr)\, \sum_\pm
\frac{\Gf(\pm\tau)}{\Gf(l+\frac12\pm\tau)} x^{-\frac32\pm\tau-l}
\\
\nonumber
&\qquad\qquad\hbox{}\cdot (x^2-1)^{l+\frac12}\, \hypgeom\bigl(
\txtfrac12-l,\txtfrac12+l;1\mp\tau;(1-x^2)^{-1}\bigr)\, dx\,.
\end{align}

We apply this for $\tau=it\in i\RR$ with $|t|\geq c$. Consider first
an interval $[U_1,V_1]$ with $1<x(U_1)<x(V_1)\leq 2$. Then we use the
first integral in~\eqref{ht} to get a bound
\[ \;\ll_l\; \|k^{(l)}\|_\infty\; \int_{x(U_1)}^{x(V_1)} \bigl|
\hypgeom\bigl(l+\txtfrac12+it,l+\txtfrac12;2l+1;1-x^{-2}\bigr)\bigr|\,dx\,.\]
For $x\geq 1+\dt>1$ we have by formulas (14) and~(15) in \S2.3.2
of~\cite{EMOT}
\[\hypgeom\bigl(l+\txtfrac12,l+\txtfrac12+it;2l+1;1-x^{-2}\bigr)\;\ll_{l,\dt}\;
|t|^{-\frac12-l}\,. \]
This gives a bound
\begin{align*}\oh_{l,\dt}\bigl(\|k^{(l)}||_\infty& \bigl(
x(V_1)-\nobreak x(U_1) \bigr)\bigr) \\
&\;\ll\; \|k^{(l)}||_\infty\,
(V_1-\nobreak U_1)
\bigl(1+\nobreak\frac1{\sqrt{V_1+V_1^2}+\sqrt{U_1+U_1^2}}\bigr)\\
&\;\ll\;\|k^{(l)}||_\infty\,
(V_1-\nobreak U_1)V_1^{-\frac12}\,.\end{align*}
We stress that the use of $\dt$ is critical for the application of the
asymptotic behavior from {\sl loc.\ cit.} If we allow $x$ to get down
to~$1$ the implicit constant blows up.

For an interval $[U_2,V_2]$ with $x(U_2)\geq 2$ we can use the second
integral in~\eqref{ht}. The hypergeometric series shows that
\[ \bigl| \hypgeom\bigl( \txtfrac12-l,\txtfrac12+l;1\mp
it;(1-x^2)^{-1}\bigr)\bigr| \;\leq \;
\hypgeom\bigl(\txtfrac12+l,\txtfrac12+l;1;(1-x^2)^{-1}\bigr) \,,\]
which is $\oh_l(1)$ for $x\geq 2$. By Stirling's formula we get an
estimate by
\[ \oh_l\biggl( \|k^{(l)}\|_\infty \, |t|^{-l-\frac12}
\int_{x(U_2}^{x(V_2)} x^{-\frac12+l}\, dx \biggr) \;\ll\;
\|k^{(l)}\|_\infty\, |t|^{-\frac12-l}
(V_2-U_2)V_2^{l-\frac12}\,. \]

If $U=0$ we have only to estimate the integral over
$[V-\nobreak Y,V+\nobreak Y]$. This gives the bound
$Y \|k^{(l)}\|_\infty \max(V^{-\frac12},V^{l-\frac12})$. If
$U\geq \dt$, we get from the interval $[U-\nobreak Y,U+\nobreak Y]$
the bound
$Y \, \|k^{(l)}\|_\infty\, \max( U^{l+\frac12} , U^{-\frac12})$.
Together with the bound for the interval
$[V-\nobreak Y,V+\nobreak Y]$ we get the other bound in c) of the
lemma.

\end{proof}

\section{Spectral theory}\label{sect-spf}

\subsection{Spectral expansion} \label{sect-spe}
The pointwise convergence of the spectral expansion of sufficiently
differentiable elements of $L^2(\Gm\backslash\uhp^d)$ in
Theorem~\ref{thm-pspe} is similar to well known facts for the case
$d=1$ ({\sl e.g.}, Theorems~4.7 and~7.4 in~\cite{Iw95}), and for
rank-one Lie groups
(Lemma~2.2 in~\cite{MW92}). We sketch how to obtain the pointwise
convergence in Theorem~\ref{thm-pspe} in the present context.

We consider first the mechanism of the Selberg transform
on~$\uhp$, given in~\S\ref{sect-St}. We replace
$k\in C_c^\infty[0,\infty)$ by
\begin{equation}
r_s(u) \= \frac{\Gf(2s)}{4\pi\Gf(s)} u^{-s} \hypgeom(s,s;2s;-1/u)\,,
\end{equation}
with $\re s>1$. It has a logarithmic singularity at $u=0$ and its
support is not compact. Nevertheless, it determines a kernel function
$(z,w) \mapsto r_s(u(z,w))$ on $\uhp$ such that the corresponding
convolution operator $R_s: f \mapsto R_s f$ is well defined for
bounded $f\in C^\infty(\uhp)$. It is in fact the free space resolvent
on $\uhp$, and satisfies
\begin{equation}
R_s \bigl(\Dt-s+s^2\bigr) f\=f\,.
\end{equation}
See \S1.9 of~\cite{Iw95}. If $a,s\in \CC$ both have real part larger
than~$1$, the difference $r_{s,a}=r_s-r_a$ has no singularity at
$u=0$, and has the Selberg transform
\begin{equation}\label{hsa}
h_{s,a}(t) \= \frac{s-s^2-a+a^2 } {\bigl(t^2+(s-\frac12)^2\bigr)
\bigl(t^2+(a-\frac12)^2\bigr)}
\end{equation}
for $|\im t|<\re s-\frac12$. Moreover, the resolvent equation gives on
bounded functions in $C^\infty(\uhp)$ such that $\Dt f $ and
$\Dt^2 f$ are also bounded:
\begin{align}
L_{r_{s,a}} &(\Dt-s+s^2)(\Dt-a+a^2) f\= (R_s-R_a)
(\Dt-s+s^2)(\Dt-a+a^2) f
\\
\nonumber
&=(s-s^2-a+a^2)f\,.
\end{align}

Taking $s,a\in \CC^d$ with $\re s_j>1$, $\re a_j>1$, $s_j\neq a_j$ for
all $j$, we form
\[ k_{s,a} \bigl(u(z,w)\bigr) = \prod_j
r_{s_j,a_j}\bigl(u(z_j,w_j)\bigr)) \= \prod_j \biggl(
r_{s_j}\bigl(u(z_j,w_j)\bigr) -r_{a_j}\bigl(u(z_j,w_j)\biggr) \,,\]
and obtain a kernel operator $\Kcal_{s,a}$ on $\Gm\backslash \uhp^d$
given by the kernel function
\[R_{s,a}(z,w) \= \sum_{\gm\in \Gm} k_{s,a}(u(\gm z,w))\,.\]

We apply this operator to differentiable bounded functions $f$ on
$\Gm\backslash\uhp^d$ for which the derivatives
$\Dt_1^{b_1}\cdots\Dt_d^{b_d} f$ are bounded for all choices
$b_j\in \{0,1,2\}$.
\begin{align*}
f&\= \prod_j \frac1{(s_1-s_1^2-a_1+a_1^2)
\cdots(s_d-s_d^2-a_d+a_d^2)}\;\Kcal_{s,a} f_1\,,\\
f_1&\= (\Dt_1-s_1+s_1^2)
\cdots(\Dt_d-s_d+s_d^2)(\Dt_1-a_1+a_1^2)\cdots(\Dt_d-a_d+a_d^2) \;
f\,.
\end{align*}
Now we note that the values $\Kcal_{s,a}f(z)$ are given by a scalar
product in $L^2(\Gm\backslash\uhp^d)$:
\[ \Kcal_{s,a}f_1(z) \= \bigl\langle f_1,\overline{R_{s, a}
(z,\cdot)}\bigr\rangle\= \bigl\langle f_1,R_{\bar s,\bar a}
(z,\cdot)\bigr\rangle\,.\]
Taking the scalar product is a continuous operation
$L^2(\Gm\backslash\uhp^d)
\longrightarrow\CC$. Thus, using \eqref{Kdc} and \eqref{hsa} we
conclude that the $L^2$-expansion of $f_1$ is transformed in a
pointwise expansion:
\begin{align}\label{spe-Lf1}
\Kcal f_1(z) &\= \sum_\ell h_{ s, a}(t_\ell) \, \ps_\ell(z)\,
a_\ell^{(1)}\\
\nonumber
&\quad\hbox{} + \sum_\k 2c_\k \sum_{\mu\in \Lcal_\k} \int_0^\infty
\overline{h_{s,a}(t+\mu)} \, E(\k;it,i\mu;z)\, b_{\mu,\k}^{(1)}(t)\,
dt\,;\\
\nonumber
h_{s,a}(t) &\= \prod_j \frac{s_j-s_j^2-a_j+a_j^2}
{\bigl(t_j^2+(s_j-\frac12)^2\bigr)\bigl(t_j^2+(a_j-\frac12)^2\bigr)}\,,
\end{align}
where $a_\ell^{(1)} = \langle f_1, \ps_\ell\rangle$ and
$b_{\k,\mu}^{(1)}(t) =
\int_{\Gm\backslash\uhp^d} f_1(z) \overline{E(\k;it,i\mu;z)}\, d\mu(z)$.
Since $R_{s,a}(z,w)$ is bounded for $z\in \uhp^d$ uniformly in $z$ in
compact sets, the convergence of the expansion \eqref{spe-Lf1} is
also uniform on compact sets.

For two times differentiable functions in $L^2(\Gm\backslash\uhp^d)$
application of $\Dt_j$ changes $a_\ell$ in the spectral expansion
into $\bigl(\frac14-\mu_{\ell,j}^2\bigr)\, a_\ell$, and
$b_{\k,\mu}(t)$ into
$\bigl( \frac14+(t+\mu_j)^2\bigr)\, b_{\k,\mu}(t)$. Taking this into
account, the pointwise spectral expansion of $f$ takes the form
\begin{equation}\label{pse}
f(z) \= \sum_\ell \ps_\ell(z)\, a_\ell + \sum_\k 2c_\k \sum_{\mu\in
\Lcal_\k} \int_0^\infty E(\k;it,i\mu;z)\, b_{\mu,\k}(t)\, dt\,.
\end{equation}

Now we turn to the sum in \eqref{Kdef} defining $K(z,w)$. The sum is
locally finite in~$w$, uniform for $z$ in a fixed compact set, and
defines a smooth bounded differentiable function $f:w\mapsto K(z,w)$
with compact support modulo $\Gm$. Its derivatives are bounded,
uniform for $z$ in compact sets, hence its spectral expansion in~$w$
converges pointwise.

\subsection{Spectral measure}\label{sect-spm}
To prove Theorem~\ref{thm-spm} we use the estimate of the counting
function in Lemma~\ref{lem-ape}, and apply the mechanism of the
Selberg transform.

We take $k_j\in C_c^\infty[0,\infty)$ that satisfy $0\leq k_j\leq 1$,
$k_j=1$ on $[0,\eta_j]$ and $k_j=0$ on $[\dt_j,\infty)$ for
quantities $\eta=(\eta_j)_j$ and $\dt=(\dt_j)_j$ with
$0<\eta_j<\dt_j<\frac12$ to be chosen later. We form $k$ and $K$ as
in~\eqref{kdef} and~\eqref{Kdef}. We have seen in \eqref{K-est} that
$K(z,\cdot) \in L^2(\Gm\backslash\uhp^d)$.
We shall give two inequalities in which the norm $\|K(z,\cdot)\|_2$
occurs.

We have
\begin{align*}
\| K(z,\cdot)\|_2^2 &\= \int_{\Gm \backslash\uhp^d} |K(z,w)|^2\,
d\mu(w)\\
& \= \sum_{\gm,\dt\in \Gm} \int_{\fd} k(\gm z, w) \,k(\dt z, w)\,
d\mu(w)\\
&\= \sum_{\gm,\dt\in \Gm} \int_\fd k(\dt^{-1}\gm z,\dt^{-1} w)\,
k(z,\dt^{-1}w)\, d\mu(w)\\
&\= \sum_\gm \int_{\uhp^d} k(\gm z,w)\, k(z,w)\, d\mu(w)\,.
\end{align*}
The second factor restricts the domain of integration to $w$ with
$u(z_j,w_j) \leq \dt_j$ for all~$j$, and the first factor to $w$ with
$u\bigl((\gm z)_j,w_j\bigr) \leq \dt_j$ for all~$j$. For the
hyperbolic distances this means that
$\dist\bigl((\gm z)_j,z_j ) \leq 2\ups_j$, where $\ups_j$ corresponds
to $\dt_j$ according to the relation~\eqref{u-d}. For small values we
have $\dt_j \sim \frac14\ups_j^2$. Hence
$u(\gm_j z_j,z_j) \leq \tilde\dt_j$ with $\tilde\dt_j \sim 4\dt_j$ as
$\dt_j\downarrow0$. Hence, with $\tilde \dt=(\tilde\dt_j)_j$:
\begin{equation}\label{e1} \| K(z,\cdot)\|_2^2 \;\leq \;
\cnt(z;0,\tilde\dt) \int_{\uhp^d} k(z,w)\, d\mu(w)
\= \cnt(z;0,\tilde\dt) \prod_j h\bigl(\tfrac12\bigr)
\,.
\end{equation}
Note that Lemma~\ref{lem-se1}~a) implies that $\prod_j
h\bigl(\frac12\bigr)$ is a positive quantity between
$(4\pi)^d \allowbreak \prod_j(\dt_j-\nobreak\eta_j)$ and
$(4\pi)^d  \allowbreak \prod_j \dt_j$.

Let $X\in [1,\infty)^d$. We recall that in~\eqref{Ydef} we have given
a bounded subset of the spectral set depending on~$X$.
Theorem~\ref{thm-pspe} implies that
\begin{align}
\nonumber
\|K(z,\cdot)\|_2^2 &\= \sum_\ell |h(t_\ell)|^2 \,|\ps_\ell(z)|^2\\
\nonumber
&\qquad\hbox{} + \sum_\k 2c_\k \sum_{\mu\in \Lcal_\k} \int_0^\infty
|h(t+\mu)|^2 \, |E(\k;it,i\mu;z)|^2\, dt
\displaybreak[0]
\\
\nonumber
&\;\geq\; \min\{ |h(t)|^2\;:\; t\in Y(X)\} \; \biggl( \sum_{\ell\,,\;
t_\ell \in Y(X)} |\ps_\ell(z)|^2
\displaybreak[0]\\
\label{nest2}
&\qquad\hbox{}
+ \sum_\k 2c_\k \sum_{\mu\in \Lcal_\k} \int_{t\geq0\,,\, (t+\mu_j)_j
\in Y(X)} |E(\k;it,i\mu;z)|^2\, dt \biggr)\,.
\end{align}

To get a hold on a lower bound of $h$ on~$Y(X)$, we use
Lemma~\ref{lem-se1}~b). For $\tau\in Y(X)$ it gives
\begin{align*} \bigl| h_j(\tau_j)&-h_j\bigl(\frac12\bigr)\bigr|
\;\ll\; \dt_j^{3/2}\, X_j\,,\\
|h_j(\tau_j) |&\geq h_j\bigl(\frac12\bigr) - \oh \bigl( \dt_j^{3/2}
X_j \bigr)\= 4\pi(\dt_j-\eta_j) - \oh \bigl( \dt_j^{3/2} X_j \bigr)
\,.
\end{align*}
We take $\eta_j=\tfrac12\dt_j$, and $\dt_j = \e X_j^{-2}$ with $\e>0$
sufficiently small to have $|h_j(\tau_j)| \geq \dt_j$. This gives
\[ \min\bigl\{ |h(\tau)|^2\;:\; \tau\in Y(X) \bigr\} \;\geq \;
\frac1{(4\pi)^d}\, \prod_j h_j\bigl(\tfrac12\bigr)^2\,.\]
Thus we obtain from~\eqref{nest2} the inequality
\begin{equation}\label{e2}
\bigl\| K(z,\cdot)\bigr\|_2^2 \geq c\, S(X;z,z) \, \prod_j h_j\bigl(
\tfrac12\bigr)^2\,,
\end{equation}
for some positive constant~$c_1$, which does not depend on~$\Gm$. If
the $X_j$ are sufficiently large, the $\dt_j$ and the $\tilde\dt_j$
are sufficiently small to apply Lemma~\ref{lem-ape}. By \eqref{e2}
and~\eqref{e1} we get
\begin{align*}
S(X;z,z) & \leq \frac1{c_1\,\prod_j h_j\bigl(\tfrac12\bigr)^2} \,
\cnt(z;0,\tilde\dt)\, \prod_j h_j\bigl(\tfrac12\bigr)\\
&\;\ll_\Gm\; \frac1{\prod_j (\dt_j/2)}\, n(\tilde\dt^{-1/2},z)
\;\ll\; n(X,z)\,\prod_j X_j^2 \,.
\end{align*}

%%%%%%%%%%%%%%%%%%%%%%% references %%%%%%%%%%%%%%%%%%%%%%%%%

\newcommand\bibit[4]{
\bibitem {#1}#2, #3, #4}
% 1: label
% 2: author(s)
% 3: title
% 4: remaining part of reference

\end{document}